 \numberwithin{equation}{section}
\newcommand{\excise}[1]{}
\newtheorem{theorem}{Theorem}[section]
\newtheorem{proposition}[theorem]{Proposition}
\newtheorem{lemma}[theorem]{Lemma}
\newtheorem{corollary}[theorem]{Corollary}
\newtheorem{definition}[theorem]{Definition}
\theoremstyle{definition}
\newcommand\<{\langle}
\newcommand\CC{{\mathbb C}}
\newcommand\EE{{\mathbb E}}
\newcommand\FF{{\mathbb F}}
\newcommand\NN{{\mathbb N}}
\newcommand\QQ{{\mathbb Q}}
\newcommand\RR{{\mathbb R}}
\newcommand\ZZ{{\mathbb Z}}
\newcommand\cJ{{\mathcal J}}
\newcommand\del{\partial}
\newcommand\codim{{\rm codim}}
\newcommand\conv{{\rm conv}}
\newcommand\ini{{\rm in}}
\newcommand\rank{{\rm rank\,}}
\newcommand\Sol{{\rm Sol}}
\newcommand\vol{{\rm vol}}
\newcommand\Var{{\rm Var}}
\newcommand\inww{{\rm in}_{(-w,w)}}
\newcommand\inw{{\rm in}_w}
\newcommand\HA{{H_A(\beta)}}
\newcommand\minus{\smallsetminus}
\renewcommand\>{\rangle}
\newcommand*{\defeq}{\mathrel{\vcenter{\baselineskip0.5ex \lineskiplimit0pt
                     \hbox{\scriptsize.}\hbox{\scriptsize.}}}%
                     =}
\begin{document}
\mbox{}
\title{On the parametric behavior of $A$-hypergeometric series}

\author{Christine Berkesch Zamaere}
\address{School of Mathematics, University of Minnesota, Minneapolis, MN 55455.}
\email{cberkesc@math.umn.edu}

\author{Jens~Forsg{\aa}rd}
\address{Department of Mathematics \\
Texas A\&M University \\ College Station, TX 77843.}
\email{jensf@math.tamu.edu}

\author{Laura Felicia Matusevich}
\address{Department of Mathematics \\
Texas A\&M University \\ College Station, TX 77843.}
\email{laura@math.tamu.edu}

\thanks{
CBZ was partially supported by NSF Grant DMS 1440537. 
LFM was partially supported by NSF grants DMS 1001763 and DMS 1500832. 
JF was partially supported by G. S. Magnusson Fund of the Royal Swedish Academy of Sciences
}
\subjclass[2010]{Primary: 33C70; Secondary: 14M25, 32A10, 52B20}
\begin{abstract}
We describe the parametric behavior of the series solutions of an
$A$-hyper\-geo\-metric system. More precisely, we construct
explicit
stratifications of the parameter space such that, on each stratum, the
series solutions of the system are
holomorphic.  
\end{abstract}
\maketitle

\setcounter{section}{0}
\section{Introduction}
\label{sec:intro}

A classical object of study, the Gauss hypergeometric equation is
\begin{equation}
\label{eqn:Gauss}
z (1-z) \frac{d^2F}{dz^2} + (c-z(a+b+1))\frac{dF}{dz} -abF = 0 \,;
\qquad a, b, c \in \CC.
\end{equation}
The quantities $a,b,c$ are considered parameters, and $z$ is
considered a variable. For $c\notin \ZZ$, the functions
\begin{equation}
\label{eqn:GaussSolutions}
F(z;a,b,c) = \sum_{n=0}^\infty
\left[ \prod_{\ell=0}^{n-1}\frac{(a+\ell)(b+\ell)}{(c+\ell)} \right]\cdot
\frac{z^n}{n!} \; ; \quad z^{1-c}F(z;a+1-c,b+1-c,2-c)
\end{equation}
form a basis of the solution space of~\eqref{eqn:Gauss} in a
neighborhood of the origin $0\in \CC$. It is not hard to see that as
functions of $a,b,c$, the series~\eqref{eqn:GaussSolutions} are
meromorphic; the first has poles for $c \in \ZZ_{\leq 0}$, and the
second one for $c \in \ZZ_{>1}$. In order to solve~\eqref{eqn:Gauss}
for $c\in \ZZ$, one takes derivatives of the
series~\eqref{eqn:GaussSolutions} with respect to the parameters. For
instance, when $c=1$, both series~\eqref{eqn:GaussSolutions}
coincide, and 
\[
F(z;a,b,1) \log(z) + \sum_{n=0}^\infty \left[ \frac{\del}{\del h}
  \prod_{\ell=0}^{n-1}\frac{(a+h+\ell)(b+h+\ell)}{(c+h+\ell)(1+h+\ell)}
\right]_{h=0} z^n 
\]
is the second function needed to span the solution
of~\eqref{eqn:Gauss} near $0\in \CC$. Since it is
obtained as a derivative, the series above is an entire function of
$a,b$, and holomorphic for $z$ in any simply connected subset of the
punctured disc $0<|z|<1$ in $\CC$. In this way, it is possible to
understand the solutions of~\eqref{eqn:Gauss} as functions of $(a,b,c)
\in \CC^3$.

The goal of this article is to perform a similar analysis for
certain generalized hypergeometric systems in several variables:
the \emph{$A$-hypergeometric systems} of Gelfand, Graev,
Kapranov and Zelevinsky
(Definition~\ref{def:GKZ}).

We adopt the convention that $\NN = \{0,1,2,3,\dots\}$. Throughout
this article, 
let $A$ be a $d\times n$ integer matrix of rank $d$ whose columns, denoted by $a_1,\dots,a_n$, span
$\ZZ^d$ as a lattice. We write $A = [a_{ij}]$ where $i = 1, \dots, d$ and $j = 1, \dots, n$.
In Sections~\ref{sec:SST},~\ref{sec:LogFreeHolomorphic},~\ref{sec:LogSeriesParametric}
and~\ref{sec:LogSeriesHolomorphic}, we impose the additional assumption
that $A$ is \emph{homogeneous}, i.e., that $(1,\dots,1)\in \QQ^n$ is in the
$\QQ$-row span of $A$. We return to the inhomogeneous case in Section~\ref{sec:confluentSolutions}.

The \emph{Weyl algebra} $D$ is the ring of differential operators on
$\CC^n$. In other words, $D$ is the quotient of the free associative $\CC$-algebra
generated by the variables $x_1,\dots,x_n$, and $\del_1,\dots,\del_n$,
by the two-sided ideal   
\[
\left\< x_k x_\ell = x_\ell x_k,\ 
  \del_k \del_\ell = \del_\ell \del_k, \ 
  \del_k x_\ell = x_\ell \del_k +\delta_{k\ell}
  \mid k,\ell\in\{1,\dots,n\} \right\>,
\]
where $\delta_{k\ell}$ denotes the Kronecker $\delta$-function. 

The \emph{toric ideal} of the matrix $A$ is defined by
\[
I_A = \left\< \del^u - \del^v 
  \mid u,v\in\NN^n, \, 
  Au=Av \right\> \subseteq \CC[\del]. 
\]
For each parameter $\beta\in\CC^d$, the \emph{Euler operators}
associated to $A$ and $\beta$ are
$E-\beta = \{E_i-\beta_i\}_{i=1}^d$, where
\[
E_i = \sum_{j=1}^na_{ij}x_j\del_j ,\qquad i = 1,\dots, d.
\] 
\begin{definition}
\label{def:GKZ}
The 
\emph{$A$-hypergeometric system} with parameter $\beta$ is
given by
\begin{align*}
\label{eqn:HAbeta}
\HA \defeq D\cdot (I_A + \< E-\beta\>).
\end{align*}
The $D$-module $D/H_A(\beta)$ is known as the \emph{$A$-hypergeometric
  $D$-module with parameter $\beta$}.
\end{definition}

$A$-hypergeometric systems~\cite{GGZ,gkz88,GKZ} were introduced by Gelfand, Graev, Kapranov
and Zelevinsky.
We study here \emph{$A$-hy\-per\-geo\-metric functions}, the solutions of
$A$-hy\-per\-geo\-me\-tric systems.
Let $x$ be a nonsingular point of the system $H_A(\beta)$.
A \emph{solution of $D/\HA$ at $x$} is a germ $\varphi$ of a holomorphic function
such that $P\bullet \varphi = 0$ for each $P\in H_A(\beta)$.
The solutions of $D/\HA$ at $x$ form a $\CC$-vector space, which we denote by $\Sol_x(\HA)$. 
The \emph{rank} of $D/\HA$, denoted $\rank D/\HA$, is the vector space
dimension of $\Sol_x(\HA)$, 
which is independent of the choice of $x$.

Some properties of the Gauss hypergeometric equation~\eqref{eqn:Gauss}
generalize to the $A$-hy\-per\-geo\-me\-tric setting. Of particular relevance is fact is that
if $\beta$ is sufficiently generic, the solutions of $D/\HA$ have
explicit combinatorial expressions, analogous
to~\eqref{eqn:GaussSolutions}, which are easily shown
to be holomorphic as functions of the parameters (see
Section~\ref{sec:LogFreeHolomorphic} for more details). We can apply
the method of parametric derivatives, which produced the 
solutions of~\eqref{eqn:Gauss} at special parameters, to the
$A$-hypergeometric situation. However, this method does
not produce all $A$-hy\-per\-geo\-me\-tric functions unless the holonomic rank
of $D/\HA$ is constant as a function of $\beta$~\cite[Section~3]{BFM}.
In general, however, the rank of $D/\HA$ is not
constant~\cite{MMW,berkesch}, and therefore the method of parametric
derivatives will not serve the purpose of this article. 

Our main tool to understand the parametric behavior of
$A$-hypergeometric functions is the fact that these functions can be
expanded as Nilsson series (convergent Puiseux series with
logarithms), in an algorithmic way. When 
$A$ is homogeneous, this follows from the fact that $D/\HA$ is a
regular holonomic $D$-module~\cite{hotta}, by the method of canonical
series~\cite[Sections~2.5 and~2.6]{SST}. If $A$ is not homogeneous,
then $D/\HA$ has irregular singularities~\cite{SW} (these systems are
also known as \emph{confluent}), but if the domain of
expansion is adequately chosen, one can still write $A$-hypergeometric
functions as Nilsson series~\cite{nilsson}.

When $A$-hypergeometric series have no logarithms, their parametric
behavior is understood (Section~\ref{sec:LogFreeHolomorphic}).
The core of our work is to understand the true logarithmic
$A$-hypergeometric series as functions of the parameters.
The key difficulty in this study is the fact that there are no general combinatorial
expressions for those solutions. When such combinatorial expressions
exist (for instance~\cite{adolphson-sperber}), our arguments may be simplified.

To compute the solutions of $D/H_A(\beta)$ as power series, we choose
special points (called \emph{toric infinities}) around which to expand. This is
equivalent to choosing a weight vector.
A vector $w \in \RR^n_{>0}$ is called a \emph{weight vector} for $A$
if it is sufficiently generic so that the initial ideal $\ini_w(I_A)$ with
respect to $w$ is a monomial ideal. In this case, $w$ induces a
\emph{coherent} (or \emph{regular}) \emph{triangulation} of
$\conv(A)$, the convex hull of the columns of $A$, by
projecting onto $\conv(A)$ the lower hull of the points $(w_i,a_i)$, where
$a_1,\dots,a_n$ are the columns of $A$. For
Theorems~\ref{thm:mainHomogeneous} and~\ref{thm:mainInhomogeneous}, we also require $w$
to be generic enough that the cones~\eqref{eqn:C}
and~\eqref{eqn:Cdual} are full dimensional.

Suppose that $A$ is homogeneous.
Let $w \in \RR_{>0}^n$ be a weight vector and $\Delta_w$ the induced
triangulation of $\conv(A)$. We define a stratification $\mathscr{S}$ of $\CC^d$
associated with the triangulation $\Delta_w$.

If $\sigma \subseteq \{ a_1,\dots,a_n\}$ is a codimension $1$ face of
$\Delta_w$, let $H_\sigma$ be the hyperplane spanned (as a vector space)
by the elements of $\sigma$. The \emph{codimension $0$ stratum of
  $\mathscr{S}$} is defined as 
\[
\mathscr{S}_0 \defeq
\CC^d \minus \bigg(\bigcup_{\substack{\sigma \in
  \Delta_w \\ \codim(\sigma) = 1}} \bigcup_{p\in \ZZ^d}
(p+H_\sigma)\bigg) .
\] 
Inductively, $\overline{\mathscr{S}_{i}} \minus
\mathscr{S}_{i}$ is an infinite, locally finite union of affine
spaces of codimension $i+1$.
(Here we use the closure in either the
Euclidean or the Zariski topology of $\CC^d$.) Then we define the
\emph{codimension $i+1$ stratum of $\mathscr{S}$} as
\[
\mathscr{S}_{i+1} \defeq
\bigcup_{H \in \overline{\mathscr{S}_{i}} \minus
\mathscr{S}_{i}} H \minus \bigg(\bigcup_{\substack{\sigma \in \Delta_w \\ \codim(\sigma)=1}}
\bigcup_{\substack{p \in \ZZ^d \\ p+H_\sigma \not \supseteq H }}
p+H_\sigma \bigg) ,
\]
where above, when we write $H \in \overline{\mathscr{S}_{i}} \minus
\mathscr{S}_{i}$, we mean that $H$ is an irreducible
component of $\overline{\mathscr{S}_{i}} \minus
\mathscr{S}_{i}$.

Note that the intersection of two irreducible components of $\overline{\mathscr{S}_{i}} \minus
\mathscr{S}_{i}$ is not contained in
$\mathscr{S}_{i+1}$ by construction. 
Consequently, if $q$ is a point in $\mathscr{S}_{i+1}$, then it belongs to a unique
irreducible component $H$ of $\overline{\mathscr{S}_i} \minus
\mathscr{S}_i$. 

The stratification $\mathscr{S}$ has the property that if an integer
translate of the span of a face of $\Delta_w$ intersects an irreducible
component of $\overline{\mathscr{S}_i} \minus \mathscr{S}_i$, then it
contains that whole irreducible component.

\begin{theorem}\label{thm:mainHomogeneous}
Assume that $A$ is homogeneous and $w\in \RR^n_{>0}$ is such that the
cones~\eqref{eqn:C} and~\eqref{eqn:Cdual} are full dimensional.
Let $\Delta_w$ be a coherent triangulation of $A$ arising from $w$, and consider the
stratification $\mathscr{S}$ of $\CC^d$ associated to $\Delta_w$ as above.
Then there is an
open set $U\subset\CC^n$ (depending only on $w$ and $A$) such that
the solutions of $\HA$ are holomorphic on $U\times \mathscr{S}_i$,  for
$i=0,\dots,d-1$.  
\end{theorem}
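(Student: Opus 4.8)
The plan is to produce, for each stratum $\mathscr{S}_i$ and each point $(x,\beta)\in U\times\mathscr{S}_i$, an explicit $\CC$-basis of $\Sol_x(\HA)$ whose members depend holomorphically on $(x,\beta)$, carrying out the construction by induction on the codimension $i$. Throughout I would work with the Nilsson series expansions of $A$-hypergeometric functions around the toric infinity selected by $w$, as furnished by the method of canonical series \cite[Sections~2.5 and~2.6]{SST}; since $A$ is homogeneous, $D/\HA$ is regular holonomic, so these series converge and span $\Sol_x(\HA)$. The hypothesis that the cones \eqref{eqn:C} and \eqref{eqn:Cdual} are full dimensional is exactly what yields a common nonempty open region of convergence, which I would take (after shrinking to a simply connected set on which the relevant fractional powers and logarithms are single valued) as the set $U$. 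That $U$ depends only on $w$ and $A$, and not on $\beta$, follows because the domain of convergence of such a series is governed by the recession cone of the support lattice $\ker_{\ZZ}A$ relative to $w$ rather than by the starting exponent.

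For the base case $i=0$, I would first recall from Section~\ref{sec:LogFreeHolomorphic} that a point $\beta\in\mathscr{S}_0$ avoids every integer translate $p+H_\sigma$ of the span of a codimension-one face $\sigma$ of $\Delta_w$. In the recursion defining the coefficients of a Nilsson series, the denominators that can vanish are precisely linear forms in $\beta$ cutting out such translates; hence on $\mathscr{S}_0$ every solution is a logarithm-free series whose coefficients are products of reciprocal Gamma values. Since $1/\Gamma$ is entire, each coefficient is holomorphic in $\beta$, and the uniform convergence on $U$ already established for the logarithm-free case upgrades this to joint holomorphy on $U\times\mathscr{S}_0$.

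For the inductive step, suppose holomorphy is known on $U\times\mathscr{S}_i$. Each irreducible component $H$ of $\overline{\mathscr{S}_i}\minus\mathscr{S}_i$ contributes to $\mathscr{S}_{i+1}$ away from those translates $p+H_\sigma$ that do not contain it; by the property recorded just before the theorem, the collection of translates $p+H_\sigma$ that \emph{do} contain $H$ is constant along $H$, so the resonance pattern, and hence the logarithmic type of the solutions, is uniform on this component. Along $H$ I would construct the genuinely logarithmic solutions as parametric derivatives of the holomorphic family on $\mathscr{S}_i$, differentiating in a direction normal to $H$ and then restricting to $H$: because the differentiation and restriction are taken transverse to $H$ while the surviving parameters move within $\mathscr{S}_{i+1}$, the output stays holomorphic in the along-$H$ directions, and the $\log$ factors are produced exactly by the vanishing of the resonant denominators. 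A rank count, comparing the number of independent series so obtained with $\rank D/\HA$ at $\beta\in\mathscr{S}_{i+1}$, then confirms that they span $\Sol_x(\HA)$.

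The main obstacle is precisely the logarithmic solutions, for which no closed combinatorial expression is available. The difficulty is twofold: one must show that the parametric-derivative construction yields a complete basis, not merely some solutions, and that the resulting series converge uniformly on $U\times K$ for compact $K\subset\mathscr{S}_{i+1}$, uniformly as the logarithmic order grows with the codimension $i$. I expect this to require a careful analysis of the coefficient recursions coming from $I_A$ together with the Euler relations $E-\beta$: these recursions relate coefficients at exponents differing by elements of $\ker_{\ZZ}A$, their denominators vanish exactly along the translates $p+H_\sigma$, and the induction must track how each such vanishing is absorbed into a $\log$ term while the surviving quotients remain holomorphic and suitably bounded along the stratum.
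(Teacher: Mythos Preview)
Your base case is fine and matches the paper, and your reasoning that $U$ can be taken independent of $\beta$ is correct (the paper obtains this via a comprehensive Gr\"obner basis argument, Theorem~\ref{thm:openSetInx}).

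The genuine gap is in the inductive step. You propose producing the logarithmic solutions on $\mathscr{S}_{i+1}$ as \emph{parametric derivatives}, taken transverse to the component $H$, of the holomorphic family on $\mathscr{S}_i$. The paper explicitly rules this out in the introduction: the method of parametric derivatives yields a full basis of $\Sol_x(\HA)$ only when $\rank D/\HA$ is constant in $\beta$, and in general it is not (see~\cite{MMW,berkesch} and~\cite[Section~3]{BFM}). At a rank-jumping parameter on $\mathscr{S}_{i+1}$ there are more solutions than can be obtained as limits or derivatives of the generic family coming from $\mathscr{S}_i$; your rank-count step would fail precisely there. You flag this concern at the end, but nothing in the proposal addresses it.

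What the paper does instead is abandon parametric differentiation entirely and work directly with the canonical series at each parameter on the stratum. First it shows (Theorem~\ref{thm:perturbParameterLogSeries} and Corollary~\ref{coro:logSeriesInParameters}) that the logarithmic polynomial coefficients $p_u(\log x;\alpha)$ of a canonical series are \emph{rational} functions of the non-integer coordinates $\alpha_\sigma$ of the exponent; this is obtained by combining the SST canonical-series algorithm with an antiderivative trick (Proposition~\ref{prop:antiderivative}) that shifts the parameter into a range where Gr\"obner coefficients do not vanish. Then, to upgrade rationality to holomorphy with a uniform domain in $x$, it dehomogenizes the torus action to pass to an $m$-variable series, exploits the Horn operators~\eqref{eqn:HornOp} to set up first-order recurrences among the logarithm-free pieces $F_\gamma$, and solves these recurrences via Hadamard products (Lemma~\ref{lemma:hadamardProduct}) in a double induction: outer on the logarithmic exponent $\gamma$, inner on the number of $z$-variables. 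This is the content of Theorem~\ref{thm:logSeriesHolomorphic}, and it is what replaces your parametric-derivative step.
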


If $A$ is not homogeneous,
then a slightly weaker version of Theorem~\ref{thm:mainHomogeneous}
holds.
For $A$ a $d\times n$ integer matrix as above, we define $\rho(A)$ to
be the $(d+1)\times (n+1)$ matrix obtained from $A$ by appending $0
\in \ZZ^d$ on the left, and then appending $(1,\dots,1) \in\ZZ^{n+1}$
on top of the resulting $d\times(n+1)$ matrix.
In this case, we consider coherent triangulations $\Delta_w$ of $\conv(\{0,a_1,\dots,a_n\})$,
and construct the corresponding stratification $\mathscr{S}$ in the
same way as before. Moreover, only special triangulations can be used
in the following result.

\begin{theorem}\label{thm:mainInhomogeneous}
Let $A \in \ZZ^{d \times n}$, not necessarily homogeneous,
and let $w\in \RR^n_{>0}$  sufficiently generic that the
cones~\eqref{eqn:C} and~\eqref{eqn:Cdual} are full dimensional. 
Let $w \in \RR_{>0}^n$ be a weight vector
such that the closure of the cone~\eqref{eqn:C} contains
$(1,\dots,1) \in \RR^n$. Let $\Delta_w$ be the coherent triangulation of
$\conv(\{0, a_1,\dots,a_n\})$ arising from $(0,w)$, and let
$\mathscr{S}$ be the corresponding stratification of $\CC^d$. Then
there exists an open set $U \subset \CC^n$ (depending only on $w$ and $A$) such
that the solutions of $H_A(\beta)$ are holomorphic on $U \times
\mathscr{S}_i$ for $i=1,\dots,d-1$.
\end{theorem}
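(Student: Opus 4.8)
The plan is to deduce Theorem~\ref{thm:mainInhomogeneous} from the homogeneous Theorem~\ref{thm:mainHomogeneous} applied to $\rho(A)$, using the dehomogenization $x_0=1$ to pass from $\rho(A)$ back to $A$. First I would record that $\rho(A)$ is homogeneous by construction and that the weight $(0,w)$ induces on $\conv(\rho(A))$ exactly the lift of the triangulation $\Delta_w$ of $\conv(\{0,a_1,\dots,a_n\})$; the hypothesis that the closure of~\eqref{eqn:C} contains $(1,\dots,1)$ is precisely what makes $(0,w)$ a weight vector for $\rho(A)$ with the cones~\eqref{eqn:C} and~\eqref{eqn:Cdual} full dimensional. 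Thus Theorem~\ref{thm:mainHomogeneous} applies and yields an open set $\widetilde U\subset\CC^{n+1}$ over which the solutions of $H_{\rho(A)}(\beta_0,\beta)$ are holomorphic on each stratum $\widetilde{\mathscr S}_j\subset\CC^{d+1}$, $j=0,\dots,d$.

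Next I would set up the correspondence of solutions. The Euler operators $E_1,\dots,E_d$ of $\rho(A)$ coincide with those of $A$, since the appended column of $\rho(A)$ vanishes in rows $1,\dots,d$; hence restriction to $x_0=1$ sends any $\Phi$ annihilated by $H_{\rho(A)}(\beta_0,\beta)$ to a function $\varphi=\Phi|_{x_0=1}$ annihilated by $E_i-\beta_i$ for all $i$. To check that $\varphi$ also satisfies the toric operators of $A$, I would use the homogeneity relation $E_0\Phi=\beta_0\Phi$ to rewrite each occurrence of $\partial_0$ in a generator of $I_{\rho(A)}$ in terms of the $A$-Euler operator, so that the homogenized binomials collapse to the binomials generating $I_A$ once $x_0=1$; the $\rho(A)$-homogeneity then reconstructs the $x_0$-dependence of $\Phi$ from $\varphi$, making the restriction a bijection onto $\Sol_x(\HA)$. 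On the level of canonical Nilsson series this correspondence is term-by-term: the series for $H_{\rho(A)}$ provided by~\cite{SST} restrict, along $x_0=1$, to the confluent Nilsson series for $H_A$ produced by~\cite{nilsson}, with $x_0$ contributing only the factors that record the $\rho(A)$-degree. I would then take $U=\widetilde U\cap\{x_0=1\}$ and match strata: the hyperplanes $H_\sigma$ of $\rho(A)$ that contain the homogenizing vertex are cylinders in the $\beta_0$-direction whose projections to $\CC^d$ are exactly the hyperplanes defining $\mathscr S$, so for $\beta\in\mathscr S_i$ a generic choice of $\beta_0$ places $(\beta_0,\beta)$ in $\widetilde{\mathscr S}_i$ while avoiding the remaining hyperplanes, and the joint holomorphicity from Theorem~\ref{thm:mainHomogeneous} transfers to $U\times\mathscr S_i$.

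The hard part, and the source of the restriction to $i\ge1$, is the analytic step of dehomogenizing at $x_0=1$. The hypothesis only places $(1,\dots,1)$ in the closure of the convergence cone~\eqref{eqn:C}, not in its interior, so setting $x_0=1$ sits on the boundary of the domain of the homogeneous series. I expect this to be harmless on the strata $\mathscr S_i$ with $i\ge1$, where the solutions are pinned to a positive-codimension resonance locus and the confluent Nilsson series of~\cite{nilsson} are available on a fixed $U$, but genuinely obstructive on the codimension-$0$ stratum, where the generic logarithm-free series must be evaluated along this boundary direction uniformly in $\beta$; this is the analytic shadow of the irregular singularities that $\HA$ acquires when $A$ is inhomogeneous~\cite{SW}. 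I therefore anticipate that the bulk of the work is a convergence and uniformity analysis at the boundary direction $x_0=1$, isolating exactly where $(1,\dots,1)$ enters the interior of the cone, while the operator bookkeeping of the correspondence and the combinatorial matching of strata are comparatively routine.
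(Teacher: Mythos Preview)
Your overall strategy---apply Theorem~\ref{thm:mainHomogeneous} to $\rho(A)$ and then restrict along $x_0=1$---is exactly the paper's. However, two of your key steps leave genuine gaps that the paper fills with machinery you do not invoke.

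First, on the bijection of solution spaces: reconstructing the $x_0$-dependence from $\rho(A)$-homogeneity gives injectivity of the restriction map, but not surjectivity. To know that every solution of $H_A(\beta)$ arises by restricting one of $H_{\rho(A)}(\beta_0,\beta)$ you need the two systems to have equal rank, and this is a nontrivial fact. The paper proves it as Theorem~\ref{thm:homogRank+isomSol} by comparing the ranking-lattice data of~\cite{berkesch} under homogenization (Propositions~\ref{prop:preserve} and~\ref{prop:sameLatticeTranslates}); this combinatorial rank argument is the real content of Section~\ref{sec:confluentSolutions}. Your term-by-term Nilsson-series heuristic does not substitute for it, since~\cite{nilsson} only guarantees a basis of convergent Nilsson series for generic $\beta$.

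Second, you pick $\beta_0$ ``generically'' for each $\beta$, but the conclusion requires a choice that varies holomorphically over the stratum. The paper does this by fixing, once and for all, an affine hyperplane $Z\subset\CC^{d+1}$ with algebraically independent irrational coefficients, chosen so that $Z$ misses every excess ranking lattice coming from faces $G\preceq\rho(A)$ with $\rho(0)\notin G$; then $\beta\mapsto(\beta_0,\beta)\in Z$ is linear and Theorem~\ref{thm:homogRank+isomSol} applies for all $\beta$ simultaneously. Without such a device, your ``generic $\beta_0$'' does not assemble into a holomorphic family.

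Finally, your diagnosis of why $i\geq 1$ is not the paper's. You attribute the loss of the codimension-zero stratum to a boundary-convergence problem at $x_0=1$, but that analytic issue is uniform across strata and is absorbed into the choice of $U$. In the paper's argument the obstruction arises instead at the combinatorial level: the fixed lift $Z$ and the ranking-lattice comparison are what control the correspondence of strata under projection, and it is there---not in an irregular-singularity phenomenon---that the top stratum is not recovered.
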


We remark that $A$-hypergeometric functions do not vary according to
isomorphism classes of 
$A$-hypergeometric systems, as computed in~\cite{saito-isom}.

\subsection{Relation to GG Functions}

In~\cite{GG97}, Gelfand and Graev introduced systems of
differential and difference equations, much generalizing the
hypergeometric systems studied here.
In a special case, these were also rediscovered by Ohara and Takayama~\cite{ohara-takayama}.
A key feature of the \emph{$GG$ systems} is that the parameters are now
considered as variables, and solutions are required to be holomorphic
in these new variables. However, the issues
addressed in this article do not arise in the $GG$ setting, because it no longer makes sense to
restrict to special parameters. For instance, the $GG$ system
for the Gauss hypergeometric equation~\eqref{eqn:Gauss} is
\begin{align*}
\frac{dF(z;a,b,c)}{dz} & = F(z;a+1,b+1,c+1),  \\
F(z;a+1,b,c) &= a F(z;a,b,c) + z F(z;a+1,b+1,c+1) ,\\
F(z;a,b+1,c) &= b F(z;a,b,c) + z F(z;a+1,b+1,c+1) ,\\
F(z;a,b,c-1) &= c F(z;a,b,c) + z F(z;a+1,b+1,c+1) ,
\end{align*}
whose solution space is spanned by 
\[
(1/\Gamma(c))F(z;a,b,c) \quad \text{and} \quad
(z^{1-c}/\Gamma(2-c))F(z;a+1-c,b+1-c,2-c), 
\]
where $\Gamma$ is the
Euler gamma function and $F$ is as
in~\eqref{eqn:GaussSolutions}. These series are entire in $a,b,c$, and 
holomorphic for $z$ in a punctured polydisc around $0 \in \CC$. Thus,
the Gauss $GG$ system does not capture the solutions of the Gauss
differential equation for $c\in \ZZ$. In general, the $GG$ systems
corresponding to the hypergeometric equations studied here give rise
to the solutions of those systems for generic parameters, as is 
stated in~\cite[Theorem~4]{GG99}.
See also Section~\ref{sec:LogFreeHolomorphic}.

\subsection*{Outline}
Sections~\ref{sec:SST}-\ref{sec:LogSeriesHolomorphic} lead to a proof of Theorem~\ref{thm:mainHomogeneous}. 
Section~\ref{sec:SST} contains background on canonical series
solutions, mostly from~\cite[Chapter~2]{SST}. 
Section~\ref{sec:LogFreeHolomorphic} introduces useful series of Horn
type and shows that logarithm-free
hypergeometric series are holomorphic in the parameters. 
Sections~\ref{sec:LogSeriesParametric}
and~\ref{sec:LogSeriesHolomorphic} extend this result to
hypergeometric series involving logarithms, culminating in a proof of Theorem~\ref{thm:mainHomogeneous}. 
Section~\ref{sec:confluentSolutions} gives an upper bound for rank in
the inhomogeneous case and provides the proof of
Theorem~\ref{thm:mainInhomogeneous}. 

\subsection*{Acknowledgements}
We are grateful to Alicia Dickenstein, Mar\'ia Cruz Fern\'andez
Fern\'andez, Pavel Kurasov, Ezra Miller, Timur Sadykov, and Uli
Walther for inspiring conversations and helpful comments provided
during the course of this work.

\section{Canonical series solutions of $A$-hypergeometric systems}
\label{sec:SST}

In this section, we summarize the essential properties of the
canonical series solutions from Chapter~2 of~\cite{SST} that are needed
in the sequel, and prove Theorem~\ref{thm:openSetInx}.

A left $D$-module of the form $D/I$ is said to be \emph{holonomic} if $\textrm{Ext}^i_D(D/I,D)=0$ for all $i \neq n$. 
An ideal $I$ is said to be holonomic if the quotient $D/I$ is a holonomic $D$-module.
In this case, by a result of Kashiwara (see ~\cite[Theorem~1.4.19]{SST})
the \emph{holonomic rank} of $D/I$, denoted by $\rank(D/I)$, which is by definition 
the dimension of the space of germs of holomorphic solutions of
$I$ near a generic nonsingular point of the system $I$, is finite.
By~\cite[Theorem~3.9]{adolphson}, the quotient $D/\HA$ is holonomic for all $\beta$,
and thus has finite rank.

The algorithm to compute canonical series solutions presented in~\cite[Section~2.6]{SST} applies to all \emph{regular holonomic} left $D$-ideals.
For an $A$-hypergeometric system, regular holonomicity is equivalent
to the matrix $A$ being homogeneous~\cite{hotta,SW}, an assumption
which we impose for the remainder of this section. 

Fix a weight vector $w \in \RR^n$. 
The $(-w,w)$-\emph{weight} of a monomial $x^u\del^v \in D$ is by definition
$ -w \cdot u + w \cdot v$. Thus, the weight $w$ induces a partial order on the set of monomials in $D$.
For a nontrivial $f(x, \del)=\sum_{u,v}c_{u v}x^u\partial^v \in D$, 
the \emph{initial form} $\inww(f)$ of $f$ with respect to $(-w,w)$
is the subsum of $f$ consisting of its (nonzero) terms of maximal $(-w,w)$-weight. 
If $I$ is a left $D$-ideal, its \emph{initial ideal} with respect
to $(-w,w)$ is
\[
\inww(I) = \< \inww(f) \mid f \in I, f \neq 0 \> \subset D.
\] 
Similarly, if $J \subseteq \CC[\del]$ is an ideal, we define the
initial ideal $\inw(J)$ as the ideal generated by the initial terms
of all nonzero polynomials in $J$.

Assume that $w$ is sufficiently generic so that the rational
polyhedral cones
  \begin{align}
  \label{eqn:C}
    C^{\phantom{*}} & \defeq  
    \{ w' \in \RR^n \mid \ini_{(-w',w')}
    (H_A(\bar{\beta}))=
    \inww(H_A(\bar{\beta})) \text{ for all } 
    \bar{\beta} \in \CC^d
    \}
\quad\text{and} \\
  \label{eqn:Cdual}
	C^* & \defeq \{ {\phantom{'}}v{\phantom{'}} \in \RR^n \mid v \cdot w' \geq 0 
	\text{ for all } w' \in C \}.
\end{align}
are full dimensional. Note that $C$ is open, while $C^*$ is closed
(with nonempty interior). 
Identify $C^*_\ZZ = C^*\cap\ZZ^n$ with the set $\{x^v\mid v\in C^*\cap\ZZ^n\}$ (notice
that this set depend on $w$), and set 
\[
N_w = \CC[[C^*_\ZZ]][\log(x)],
\] 
where $\log(x) = (\log(x_1),\dots,\log(x_n))$. 
We call an element of $N_w$ of the form $\log(x)^\delta$ a \emph{logarithmic monomial},
and an element of the form $x^\gamma\log(x)^\delta$ a \emph{mixed monomial}.
A series $\varphi(x) = \sum_{\gamma,\delta} c_{\gamma
  \delta}\,x^\gamma \log(x)^\delta\in N_w$ is called a
logarithmic series. 
By~\cite[Proposition~2.5.2]{SST}, if $\varphi$ is a
logarithmic series which is a solution to $H_A(\beta)$ then, with
notation as above, the set of real parts 
$\{\textrm{Re}(\gamma \cdot w) \mid  c_{\gamma \delta} \neq 0 \text{
  for some } \delta \}$
achieves a (finite) minimum which is denoted $\mu(\varphi)$.
Furthermore, the subseries of $\varphi$ whose 
terms are $c_{\gamma \delta} x^{\gamma} \log(x)^\delta$ such that
$c_{\gamma \delta} \neq 0$ and
$\textrm{Re}(\gamma \cdot w)  = \mu(\varphi)$ is finitely supported. 
We call this finite sum the
\emph{initial series of $\varphi$ with respect to $w$} and we denote it by
$\inw(\varphi)$. 

\begin{definition}
\label{def:canonicalSeries}
A \emph{canonical series solution of $\HA$ (with respect to $w$)} is a
series $\varphi\in N_w$ such that 
\begin{enumerate}
\item 
 $\varphi$ is a \emph{formal solution} of $\HA$,
\item the initial series $\inw(\varphi)$ is a mixed monomial, 
\item the mixed monomial $\inw(\varphi)$ is the unique element of the set
\begin{align}\label{eqn:Start}
 \{\inw(\varphi)\mid \varphi 
  \in N_w \text{ is a (formal) solution of $\HA$}\}
\end{align}
appearing in the logarithmic series $\varphi$ with nonzero coefficient. 
\end{enumerate}
Let $\varphi$ be a canonical series solution of $\HA$ with respect to
$w$, and write $\inw(\varphi) = x^\gamma(\log(x))^\delta$. The vector
$\gamma$ is called \emph{exponent} of $\varphi$. The \emph{exponents}
of $\HA$ with respect to $w$ are the exponents of all its canonical
series with respect to $w$.
\end{definition}

\begin{theorem}[{\cite[Theorem 2.5.16]{SST}}]
\label{thm:css}
Let $A$ be homogeneous, $\beta \in \CC^d$ and $w$ a weight vector. 
Then there exists a domain $W\subseteq\CC^n$ (depending on $w$) such that, for any $x\in W$, 
the canonical series solutions of $\HA$ form a basis of\/ $\Sol_x(\HA)$. 
\qed
\end{theorem}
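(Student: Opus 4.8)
The plan is to exploit regular holonomicity, which holds here precisely because $A$ is homogeneous, and to reduce the statement to two independent assertions: first, that there are exactly $\rank(D/\HA)$ linearly independent \emph{formal} canonical series in $N_w$; and second, that each such formal series converges on a common domain $W$ and hence defines a genuine holomorphic solution. The counting and independence will follow from the combinatorics of the initial ideal, while the convergence is the analytic heart of the argument and relies on the regularity of the singularities.

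First I would study the initial system $\inww(\HA)$. Since $w$ is generic, $\inww(I_A)$ is a monomial ideal, and together with the Euler operators the ideal $\inww(\HA)$ is torus-fixed, so its solution space is spanned by mixed monomials $x^\gamma \log(x)^\delta$: the exponents $\gamma$ are the roots of the associated (fake) indicial ideal, and the attached logarithmic blocks encode the Jordan structure along each root. I would then verify that the number of these starting mixed monomials, counted with their logarithmic multiplicities, equals $\rank(D/\inww(\HA))$. This is essentially a bookkeeping computation with the standard-pair decomposition of the monomial initial ideal and the indicial data.

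Next I would invoke the Gr\"obner-deformation identity $\rank(D/\HA) = \rank(D/\inww(\HA))$, valid for regular holonomic ideals, which together with the previous count produces exactly $\rank(D/\HA)$ admissible starting terms. For each starting term I would run the Frobenius-type recursion imposed by the operators of $\HA$: writing a candidate $\varphi = \sum_{\gamma,\delta} c_{\gamma\delta}\, x^\gamma \log(x)^\delta \in N_w$, each generator of $\HA$ dictates a system on the coefficients that is triangular with respect to the $(-w,w)$-weight, since the support is constrained to lie in $C^*_\ZZ$ and the weight strictly increases off the initial series. Hence the coefficients are determined uniquely once $\inw(\varphi)$ is fixed, yielding one formal canonical series per starting term. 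Condition~(3) of Definition~\ref{def:canonicalSeries} then guarantees that these formal series are distinct and have linearly independent initial terms, so they are linearly independent in $N_w$.

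The main obstacle is convergence, since the recursion a priori only yields formal objects. Here regular holonomicity is decisive: because $D/\HA$ is regular holonomic, its formal Nilsson-class solutions along the weight filtration actually converge. Concretely, I would establish a majorant estimate on the coefficients $c_{\gamma\delta}$, using that the characteristic variety forces only tame, polynomially controlled growth in the recursion, so that each series converges on a common domain $W$ determined by $w$ (cut out by smallness of the monomials $x^v$ for $v$ in the relevant directions of $C^*$). Once convergence is secured, each formal canonical series lies in $\Sol_x(\HA)$ for every $x \in W$; there are $\rank(D/\HA)$ of them and they are linearly independent, so they form a basis, as claimed.
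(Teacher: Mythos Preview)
The paper does not give its own proof of this theorem: it is stated with a terminal \qed and attributed entirely to \cite[Theorem~2.5.16]{SST}, so there is no argument in the paper to compare against. Your proposal is a faithful outline of the proof in \cite[Chapter~2]{SST}---passing to the torus-fixed initial ideal, counting starting terms via the indicial ideal, lifting by the weight-triangular recursion, and invoking regularity for convergence---and is correct at the level of a sketch; in particular the rank equality $\rank(D/\HA)=\rank(D/\inww(\HA))$ and the convergence step are exactly where regular holonomicity is used in the cited reference.
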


In particular, canonical series solutions exist.
Note also that, with notation as in Theorem~\ref{thm:css}, for each
$x\in U$ there are precisely $\rank(\HA)$-many canonical series
solutions of $\HA$ that converge in a neighborhood of $x$ and are
linearly independent over $\CC$.

Recall that a weight vector also gives rise to a triangulation
$\Delta_w$ of $\conv(A)$. The following fact, linking canonical series
solutions and triangulations, is a consequence
of~\cite[Lemma~4.1.3]{SST} and~\cite[Equation~3.7]{SST}.

\begin{lemma}
\label{lemma:exponentSupport}
Let $\gamma$ be an exponent of $\HA$ with respect to $w$. Then the set
$\{i \in \{1,\dots,n\} \mid \gamma_i \notin \NN \}$ are the vertices
of a simplex of the triangulation $\Delta_w$, and the columns of $A$
indexed by this set are linearly independent.
\qed
\end{lemma}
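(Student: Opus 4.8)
The plan is to pin down the exponent $\gamma$ by the classical indicial/distraction mechanism and then translate the resulting divisibility conditions into the combinatorics of $\Delta_w$ through the Stanley--Reisner correspondence. First I would use the fact that the initial series of a formal solution solves the initial ideal (\cite[Section~2.5]{SST}): since $\varphi$ is a formal solution of $\HA$, its initial series $\inw(\varphi) = x^\gamma(\log(x))^\delta$ is annihilated by $\inww(\HA)$. Now $\inww(\HA) \supseteq \inw(I_A) + \langle E - \beta\rangle$, because every monomial of $I_A$ has $(-w,w)$-weight equal to its $w$-weight (so $\inw(I_A)\subseteq\inww(\HA)$) and each Euler operator $E_i - \beta_i$ has $(-w,w)$-weight $0$, hence equals its own initial form. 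Writing $\theta_j = x_j\del_j$ and using the operator identity $\del_j^{\,k} = x_j^{-k}\prod_{l=0}^{k-1}(\theta_j - l)$, I would apply each generator to $x^\gamma(\log(x))^\delta$ and read off the coefficient of the top power of $\log(x)$: annihilation by $E-\beta$ forces $A\gamma = \beta$, and annihilation by each monomial $\del^b \in \inw(I_A)$ forces the distraction to vanish at $\gamma$, that is $\prod_{j}\prod_{l=0}^{b_j-1}(\gamma_j - l) = 0$.

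Set $S = \{\, i \mid \gamma_i \notin \NN \,\}$. The key combinatorial observation is that the product $\prod_{j}\prod_{l=0}^{b_j-1}(\gamma_j - l)$ vanishes only if some single factor $(\gamma_j - l)$ vanishes, which forces $\gamma_j = l \in \{0,1,\dots,b_j-1\} \subseteq \NN$ for some index $j$ with $b_j > 0$. Such an index $j$ lies in $\supp(b)$ and, having $\gamma_j \in \NN$, does not lie in $S$. Therefore every monomial $\del^b \in \inw(I_A)$ satisfies $\supp(b)\not\subseteq S$. Equivalently, $\inw(I_A)$ contains no monomial whose support is contained in $S$; in particular no power $(\prod_{i\in S}\del_i)^k$ lies in $\inw(I_A)$, since such a power is a monomial with support exactly $S$.

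To conclude, I would invoke the identification of $\sqrt{\inw(I_A)}$ with the Stanley--Reisner ideal $I_{\Delta_w}$ of the triangulation, which underlies \cite[Equation~3.7]{SST} (and is classical for regular triangulations). Since no power of the squarefree monomial $\prod_{i\in S}\del_i$ lies in $\inw(I_A)$, this monomial is not in $\sqrt{\inw(I_A)} = I_{\Delta_w}$, and a squarefree monomial lies in a Stanley--Reisner ideal precisely when its index set is a non-face; hence $S$ is a face of $\Delta_w$, i.e.\ the vertices of a simplex. For the linear independence assertion, homogeneity of $A$ provides $c \in \QQ^d$ with $c\cdot a_i = 1$ for all $i$, so the columns all lie on an affine hyperplane missing the origin; on such a hyperplane affine independence and linear independence coincide, and the vertex set of any simplex of $\Delta_w$ is affinely independent, so the columns indexed by $S$ are linearly independent. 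This is exactly the content attributed to \cite[Lemma~4.1.3]{SST}.

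I expect the only delicate step to be the first one, namely extracting the scalar conditions $A\gamma = \beta$ and $\prod_{j}\prod_{l=0}^{b_j-1}(\gamma_j - l) = 0$ from the requirement that $x^\gamma(\log(x))^\delta$ be annihilated by $\inww(\HA)$. In the presence of logarithms one cannot simply substitute $x^\gamma$ into the operators, since $\theta_j$ acts on $(\log(x))^\delta$ by lowering the logarithmic degree; the correct bookkeeping is to isolate the coefficient of the top power of $\log(x)$, where $\theta_j$ acts as the scalar $\gamma_j$. Once this reduction is in hand, the remainder is the routine Stanley--Reisner translation described above.
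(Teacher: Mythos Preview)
Your argument is correct. The paper itself gives no proof of this lemma---it simply cites \cite[Lemma~4.1.3]{SST} and \cite[Equation~3.7]{SST} and places a \qedsymbol---so your proposal is essentially an unpacking of those citations: the distraction identity $\del_j^{\,k}=x_j^{-k}\prod_{l=0}^{k-1}(\theta_j-l)$ together with the fact that $\inw(\varphi)$ solves $\inww(\HA)$ yields the vanishing $\prod_j\prod_l(\gamma_j-l)=0$ for every monomial of $\inw(I_A)$, and the Stanley--Reisner identification $\sqrt{\inw(I_A)}=I_{\Delta_w}$ (which is the content behind the reference to \cite[Equation~3.7]{SST}, going back to Sturmfels) then converts this into the statement that $S$ is a face. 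Your handling of the logarithmic terms---isolating the coefficient of the top power $(\log x)^\delta$, where each $\theta_j$ acts as the scalar $\gamma_j$---is exactly the right bookkeeping, and the linear-independence step via homogeneity is standard.
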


The following result provides a common domain of convergence in $x$ for
all canonical series solutions with respect to a weight vector
$w$, regardless of the parameter.

\begin{theorem}
\label{thm:openSetInx}
Let $w\in \RR_{>0}^n$ be a weight vector. 
Then there exists an open, nonempty subset $V \subseteq \CC^n$ such that for
all $\beta \in \CC^d$, the canonical series solutions of $\HA$
with respect to $w$ are absolutely convergent for $x \in V$.
\end{theorem}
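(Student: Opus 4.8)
The plan is to produce $V$ by exhibiting a geometric majorant for the coefficients of every canonical series that is uniform in $\beta$, and then reading off a common convergence region from it. Fix the weight vector $w$. By Theorem~\ref{thm:css} and Definition~\ref{def:canonicalSeries}, for each $\beta$ there are finitely many canonical series solutions, and by Lemma~\ref{lemma:exponentSupport} each has an exponent $\gamma$ whose set of non-integral coordinates is the vertex set of a simplex of $\Delta_w$. Since $\Delta_w$ has finitely many simplices, there are only finitely many combinatorial \emph{types} of exponent; within a type the set of non-integral coordinates is fixed and $\gamma$ depends affinely on $\beta$ (subject to $A\gamma=\beta$). Writing a canonical series with exponent $\gamma$ as $\varphi=\sum_v c_v\,x^{\gamma+v}(\log x)^{\delta_v}$, the support variable $v$ lies in $\ker_{\ZZ}(A)\cap C^*_\ZZ$, which is determined by $w$ alone, and the log-degrees $|\delta_v|$ are bounded by $\rank(\HA)$. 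Thus it suffices to bound $|c_v|$ by $C(\beta)\,R^{|v|}$ with $R$ \emph{independent of $\beta$} and $C(\beta)$ finite for each fixed $\beta$.

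First I would treat the logarithm-free series, where the canonical series with exponent $\gamma$ coincides up to a scalar with the Gamma-series of GKZ, so that $|c_v|=\prod_j|\Gamma(\gamma_j+v_j+1)|^{-1}$ up to a $v$-independent normalization. Applying Stirling's formula, together with the reflection formula in the coordinates where $v_j\to-\infty$, the leading, genuinely super-exponential (factorial) contribution to $\log|c_v|$ has coefficient proportional to $\sum_j v_j=(1,\dots,1)\cdot v$; this vanishes because $A$ is homogeneous, so $(1,\dots,1)$ lies in the $\QQ$-row span of $A$ and $Av=0$. What survives is a purely exponential rate $R$ governed by the geometry of $\ker_{\ZZ}(A)$ and the cone $C^*$, while $\gamma$ (equivalently $\beta$) enters only through polynomial factors $|v|^{O(\Re\gamma)}$ and through trigonometric factors that, depending only on the fractional parts of $\gamma$, remain bounded. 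For the logarithmic series the coefficients are obtained from the logarithm-free ones by differentiation in an auxiliary parameter, as in the computation following~\eqref{eqn:GaussSolutions}; differentiating the Gamma-factors produces digamma factors $\psi(\gamma_j+v_j+1)$, which are polynomially bounded in $|v|$ and hence leave the exponential rate $R$ unchanged.

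With a uniform rate $R$ in hand, rather than construct $V$ by hand I would invoke Theorem~\ref{thm:css} at a single parameter $\beta_0$ to obtain a nonempty open domain on which the $\beta_0$-series converge, and then shrink it to a region where $|x^v|\le r^{|v|}$ for all $v\in C^*_\ZZ$ with a fixed $r<1/R$; this keeps it open and nonempty since $C^*$ is full-dimensional. On this $V$, for every $\beta$, absolute convergence follows by comparison: $\sum_v |c_v|\,|x^{\gamma+v}|\,|\log x|^{|\delta_v|}\le C(\beta)\sum_{v}|v|^{N(\beta)}(Rr)^{|v|}<\infty$, because $Rr<1$ and a polynomial factor never affects the convergence of a geometric series over the cone. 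Intersecting the finitely many regions arising from the finitely many exponent types yields the desired $V$, depending only on $w$ and $A$.

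The main obstacle is the estimate of the second paragraph: extracting a rate $R$ that is genuinely uniform in $\beta$. The delicate point is that, as $\beta$ ranges over $\CC^d$, the real parts $\Re\gamma_j$ are unbounded, so the degree $N(\beta)$ of the polynomial factor is unbounded; what rescues the argument is that such a factor alters only the finite constant $C(\beta)$, not the domain $V$, once the exponential rate is pinned down. Making the Stirling estimate precise for the logarithmic coefficients—where no closed form is available and one must argue inductively through the recursion of~\cite[Section~2.6]{SST}, controlling the division by indicial ($b$-function) values that may approach zero as $\beta$ varies—is the part requiring the most care; but for the present statement it is enough that these values are nonzero for each fixed $\beta$, so that $C(\beta)<\infty$, with their degeneration off the strata deferred to the later sections.
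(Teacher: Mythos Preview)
Your approach is genuinely different from the paper's, and it contains a real gap in the logarithmic case.

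The paper's proof does not attempt any coefficient estimate. Instead, it observes that a \emph{comprehensive} Gr\"obner basis for $\HA$ with respect to $(-w,w)$ (in the sense of Weispfenning) exists, i.e.\ a finite set $G\subset D[\beta]$ whose specialization at every $\bar\beta$ is a Gr\"obner basis of $H_A(\bar\beta)$. From this it follows that the Gr\"obner cone $C$ in~\eqref{eqn:C} is a single rational polyhedral cone, independent of $\beta$. One then invokes \cite[Theorem~2.5.16]{SST}, which states that every canonical series solution converges absolutely when $(-\log|x_1|,\dots,-\log|x_n|)\in \bar x + C$ for some $\bar x\in C$; since $C$ does not depend on $\beta$, neither does the resulting domain $V$. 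No Stirling bound, no analysis of logarithmic coefficients, and no case distinction by exponent type is needed.

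Your plan, by contrast, tries to bound coefficients directly. The logarithm-free part is essentially the argument the paper uses later in Theorem~\ref{thm:logFreeSeriesHolomorphic}, and is fine. The gap is your treatment of the logarithmic series. You write that ``the coefficients are obtained from the logarithm-free ones by differentiation in an auxiliary parameter''. This is precisely the method that the paper, in the Introduction, rules out in general: parametric differentiation produces all solutions only when $\rank D/\HA$ is constant in $\beta$, which it is not (\cite{MMW,berkesch}). So for some $\beta$ there are logarithmic canonical series that are \emph{not} derivatives of any logarithm-free family, and your digamma estimate simply does not apply to them. Your fallback---running the recursion of \cite[Section~2.6]{SST} and controlling the indicial divisions---is the right instinct, but you do not carry it out, and it is exactly the hard part: you would need to show that the exponential rate $R$ extracted from the recursion is uniform in $\beta$, not merely that each $C(\beta)$ is finite. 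Nothing in your sketch establishes that uniformity; the $b$-function values you divide by depend on $\beta$, and without further argument could in principle degrade the rate. (The paper spends all of Sections~\ref{sec:LogSeriesParametric} and~\ref{sec:LogSeriesHolomorphic} on the parametric control of logarithmic series, via Horn operators and Hadamard products, precisely because no shortcut of the kind you propose is available.)

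In short: the comprehensive Gr\"obner basis argument gives a two-line proof that the convergence region depends only on the cone $C$, hence only on $w$ and $A$. Your coefficient-estimate route could perhaps be completed, but not with the parametric-derivative shortcut, and the remaining work is substantial and not supplied.
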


\begin{proof}
Consider $D[\beta]$, the Weyl algebra with $d$ commuting
indeterminates $\beta_1,\dots, \beta_d$ adjoined.  
By~\cite{kredel:weispfenning,weispfenning}, there exists a
\emph{comprehensive Gr\"obner basis} for $\HA$ with respect to $(-w,w)$. 
This is a finite subset $G$ of $D[\beta]$ whose specialization for any
fixed $\bar{\beta}\in\CC^d$ is a Gr{\"o}bner basis of
$H_A(\bar{\beta})$ with respect to $(-w,w)$.  
Consider an element $P(\beta,x,\del)
= \sum q_{u,v}(\beta) x^u \del^v$ of $G$. Given $\bar{\beta}\in 
\CC^d$, the support of the initial form $\inww(P(\bar{\beta},x,\del))$
with respect to $x$ and $\del$, depends on the (vanishing of) the coefficients $q_{u,v}(\bar{\beta})$. 
For each possible support of a specialization of $P$ at
$\bar{\beta}$, the conditions on $w' \in \RR^n$ so that 
$\ini_{(-w',w')}(P) = \inww(P)$ are linear equations and inequalities, and
therefore the set, 
\[
\{ 
  w' \in \RR^n \mid
  \ini_{(-w',w')}(P(\bar{\beta},x,\del))=\inww(P(\bar{\beta},x,\del))
  \text{ for all } \bar{\beta} \in \CC^d \}
\]
is (in the Euclidean topology) a relatively open rational polyhedral cone. 
Applying the same argument to all elements of $G$, we conclude that 
the cone $C$ from \eqref{eqn:C} is a relatively open rational polyhedral cone. 
If $w$ is sufficiently generic, this cone is full dimensional, and therefore
open in $\CC^n$.
Now~\cite[Theorem~2.5.16]{SST} implies that there exists $\bar{x} \in C$
such that every canonical series solution of $H_A(\bar{\beta})$ with respect
to $w$ converges absolutely for $(-\log|x_1|,\dots,-\log|x_n|) \in
\bar{x}+C$, and this holds for all $\bar{\beta} \in  
\CC^d$. 
\end{proof}

\section{Logarithm-free hypergeometric series are holomorphic in the parameters}
\label{sec:LogFreeHolomorphic}

In this section, we assume that $A$ is homogeneous, and summarize
known results about $A$-hy\-per\-geome\-tric series without logarithms.

A series $\varphi \in N_w$ is \emph{logarithm-free} if it contains no
term $x^\gamma \log(x)^\delta$ with $\delta \neq 0$. A detailed study
of such series and their exponents can be found
in~\cite[Section~3.4]{SST}.

The exponents of logarithm-free $A$-hypergeometric series have
\emph{minimal negative support}. This means that, if $\bar\alpha$ is such 
an exponent, $\{ i \mid \bar\alpha_i \in \ZZ_{<0} \} \subseteq \{i \mid
(\bar\alpha+u)_i \in \ZZ_{<0} \}$ for all $u \in \ker_\ZZ(A)$.
Conversely, if $\bar\alpha$ is an exponent of $H_A(\bar\beta)$ with respect to
$w$ that has minimal negative support, then it is the exponent of a
(unique) logarithm-free canonical series solution of $H_A(\bar\beta)$ (note that
different canonical series solutions of $H_A(\bar\beta)$ may have the same
exponent). See~\cite[Theorem~3.4.14, Corollary~3.4.15]{SST}.

More precisely, if $\bar\alpha$ is an exponent of $H_A(\bar\beta)$ with minimal
negative support, and 
\begin{equation}
\label{eqn:support}
S = \big\{ u \in \ker_\ZZ(A) \mid \{ i \mid \bar\alpha_i \in \ZZ_{<0 }\} = \{ i \mid
(\bar\alpha+u)_i \in \ZZ_{<0}\} \big\},
\end{equation}
then
\[
\sum_{u\in S} 
\frac{\prod_{u_i<0}
  \prod_{j=1}^{-u_i} (\bar\alpha_i-j+1)}{\prod_{u_i>0}
  \prod_{j=1}^{u_i} (\bar\alpha_i+j)}\, x^{\bar\alpha+u}
\]
is the unique logarithm-free canonical series solution of $H_A(\bar\beta)$ with
exponent $\bar\alpha$~\cite[Proposition~3.4.13]{SST}. We remark that $S \subset C^* \cap \ker_\ZZ(A)$,
where $C^*$ is the cone from~\eqref{eqn:Cdual} (see the proof of~\cite[Theorem~3.4.14]{SST}). 

Since the expression above is so explicit, we may use it to view
logarithm-free $A$-hypergeometric 
series not only as a functions of
$x$ but also as functions of (some of the coordinates of) the corresponding
exponent, and consequently, as functions of the parameters.

Let $\bar\alpha$ be an exponent of $H_A(\bar\beta)$ with respect to a weight
vector $w$, and assume that $\bar\alpha$ has minimal negative
support. Let $\sigma = \{ i \mid \gamma_i \notin \ZZ\}$.
(In this
case, $\sigma$ is the set of vertices of a simplex in the triangulation $\Delta_w$ by
Lemma~\ref{lemma:exponentSupport}.) 

Now let $\alpha_j \in \CC\minus \ZZ$ for $j\in \sigma$, and denote
$\alpha \in \CC^n$ the vector whose coordinates indexed by $\sigma$
are  the $\alpha_i$, and whose remaining coordinates are the $\bar\alpha_i$. 
Then $\alpha \in (\CC\minus \ZZ)^{\sigma} \times \{
(\bar\alpha_i)_{i\notin \sigma} \}$. In what follows, we abuse
notation and write $\alpha \in (\CC\minus \ZZ)^{\sigma}$, as the
other coordinates of $\alpha$ are fixed.
It can be shown~\cite[Corollary~3.4.15]{SST} that $\alpha$ is an
exponent of $H_A(A \cdot \alpha)$ with minimal negative support,
whose corresponding logarithm free $A$-hypergeometric series is

\begin{equation}
\label{eqn:seriesFor-alpha}
\varphi(x;\alpha) = \sum_{u\in 
S}
\frac{\prod_{u_i<0}
  \prod_{j=1}^{-u_i} (\alpha_i-j+1)}{\prod_{u_i>0}
  \prod_{j=1}^{u_i} (\alpha_i+j)}\, x^{\alpha+u},
\end{equation}
where the sum is over the same set $S$ as in~\eqref{eqn:support}. As
this is a canonical series solution of a hypergeometric system, every
time we fix
$\alpha \in (\CC\minus \ZZ)^\sigma$, the series $\varphi(x;\alpha)$
converges on the open set $V$ from Theorem~\ref{thm:openSetInx}.

Thus we see that we may consider $\varphi(x;\alpha)$ as a function both
of $x$ and $\alpha$. The parameter $A \cdot \alpha$ is
a function of $\alpha$; on the other hand, we may also consider $\alpha$ as a
function of the parameter $A\cdot \alpha$, because the columns of $A$
indexed by $\sigma$ are linearly independent by Lemma~\ref{lemma:exponentSupport}.

The following result gives precise information on the convergence of $\varphi(x;\alpha)$.

\begin{theorem}
\label{thm:logFreeSeriesHolomorphic}
The series
$\varphi(x;\alpha)$ is holomorphic for $(x,\alpha) \in V\times (\CC
\minus \ZZ)^\sigma$, where
$V \subset \CC^n$ is the open set from Theorem~\ref{thm:openSetInx}.
\end{theorem}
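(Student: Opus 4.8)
The plan is to prove holomorphy by establishing that $\varphi(x;\alpha)$, as given explicitly in~\eqref{eqn:seriesFor-alpha}, is a locally uniformly convergent series whose terms are each holomorphic in $(x,\alpha)$, and then invoke the standard theorem that a locally uniform limit of holomorphic functions is holomorphic. Fix a point $(x_0,\alpha_0) \in V \times (\CC\minus\ZZ)^\sigma$. Since both factors are open, I can choose a closed polydisc $K$ around $(x_0,\alpha_0)$ with $K \subset V \times (\CC\minus\ZZ)^\sigma$; the goal is then to produce a convergent numerical majorant for $\sum_{u\in S} |c_u(\alpha)|\,|x^{\alpha+u}|$ that is uniform over $K$.

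First I would examine each individual term. For fixed $u\in S$, the coefficient
\[
c_u(\alpha) = \frac{\prod_{u_i<0}\prod_{j=1}^{-u_i}(\alpha_i-j+1)}{\prod_{u_i>0}\prod_{j=1}^{u_i}(\alpha_i+j)}
\]
is a rational function of the $\alpha_i$ for $i \in \sigma$ (the coordinates $\alpha_i$ with $i\notin\sigma$ are fixed integers or complex numbers and contribute constants). Its poles occur only when some $\alpha_i+j = 0$ for $i\in\sigma$ and a positive integer $j$, i.e.\ when $\alpha_i \in \ZZ_{<0}$. Because $K$ avoids the set $\alpha_i\in\ZZ$ for every $i\in\sigma$, no denominator vanishes on $K$, so each $c_u(\alpha)$ is holomorphic on $K$; and $x^{\alpha+u} = \prod x_i^{\alpha_i+u_i}$ is holomorphic in $(x,\alpha)$ on $V\times(\CC\minus\ZZ)^\sigma$ once we fix a branch of $\log x_i$ on the simply connected $V$ (the multivalued nature is already accommodated by working in $N_w$). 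Thus each term of~\eqref{eqn:seriesFor-alpha} is holomorphic on $K$.

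Next I would control the tail uniformly. The key structural input is that $S \subset C^*\cap\ker_\ZZ(A)$, so the exponents $\alpha+u$ range over a shifted copy of a subsemigroup of the closed cone $C^*$; this is precisely the regime in which Theorem~\ref{thm:openSetInx} guarantees absolute convergence on $V$ for every fixed $\alpha$. The remaining work is to upgrade this fixed-$\alpha$ convergence to local uniformity in $\alpha$. For the monomial factors $|x^{\alpha+u}|$, replacing $\alpha_0$ by a nearby $\alpha$ only perturbs the growth rate $\Re((\alpha+u)\cdot w)$ by a bounded amount over the compact $K$, which can be absorbed by shrinking $V$ slightly (this is why $V$ may be taken strictly inside $C$). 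For the coefficients $c_u(\alpha)$, I would bound $|c_u(\alpha)|$ on $K$ by a quantity of at most polynomial (indeed factorial-ratio) growth in $\|u\|$, using that the number of factors in numerator and denominator is linear in the $|u_i|$ while each factor is bounded by a constant plus $\|u\|$; such polynomial growth is dominated by the geometric decay coming from the monomials for $x$ deep enough inside $V$.

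The main obstacle I expect is the uniform coefficient bound near the excluded set $\ZZ^\sigma$: as $\alpha$ approaches a negative integer in some coordinate the denominator factors $\prod(\alpha_i+j)$ become small, so $|c_u(\alpha)|$ blows up, and one must verify that restricting to the compact $K$ (which is bounded away from $\ZZ$) gives a uniform positive lower bound on these denominators independent of $u$. Because only finitely many values of $j$ can make $|\alpha_i+j|$ small for $\alpha_i$ in a fixed compact set, and $\alpha_i$ stays a fixed distance from $\ZZ$ on $K$, this lower bound is uniform in $u$; once that is secured, the majorant is a convergent series of the type already handled in~\cite[Section~3.4]{SST} and Theorem~\ref{thm:openSetInx}, and holomorphy follows by the Weierstrass convergence theorem.
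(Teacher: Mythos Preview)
Your overall architecture---prove each term of~\eqref{eqn:seriesFor-alpha} is holomorphic on a compact $K\subset V\times(\CC\minus\ZZ)^\sigma$, produce a uniform majorant, and invoke Weierstrass/Morera---is correct and is exactly the skeleton of the paper's proof. The difference lies in how the uniform coefficient bound is obtained, and here your argument has a gap.

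You claim that $|c_u(\alpha)|$ has ``at most polynomial (indeed factorial-ratio) growth in $\|u\|$,'' justified by ``the number of factors in numerator and denominator is linear in the $|u_i|$ while each factor is bounded by a constant plus $\|u\|$.'' That reasoning gives a bound of the form $(C+\|u\|)^{O(\|u\|)}$, which is super-exponential, not polynomial, and is not dominated by any geometric decay. The coefficient $c_u(\alpha)$ is genuinely of factorial type in $\|u\|$; it is \emph{not} polynomial. What \emph{is} true---and what you actually need---is that the ratio $c_u(\alpha)/c_u(\bar\alpha)$ grows at most polynomially in $\|u\|$, uniformly for $\alpha$ in $K$. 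Once you have that, you majorize $\sum_u |c_u(\alpha)x^{\alpha+u}|$ by a polynomial in $\|u\|$ times the terms of the convergent series $\varphi(x;\bar\alpha)$ on $V$, and no shrinking of $V$ is required.

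The paper establishes precisely this ratio bound, and it does so via the $\Gamma$-function. One multiplies $\varphi(x;\alpha)$ by $\prod_i \Gamma(\alpha_i+1)^{-1}$ so that the $u$th coefficient becomes $\prod_i \Gamma(\alpha_i+u_i+1)^{-1}$; then one multiplies and divides by $\prod_{j\in\sigma}\Gamma(\bar\alpha_j+u_j+1)$ and uses Stirling's approximation to see that each factor $\Gamma(\bar\alpha_j+u_j+1)/\Gamma(\alpha_j+u_j+1)$ is bounded (indeed tends to $1$) as $|u_j|\to\infty$, uniformly for $\alpha$ in a compact subset of $(\CC\minus\ZZ)^\sigma$. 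This reduces the majorization to the fixed series at $\bar\alpha$, whose convergence on all of $V$ is already given by Theorem~\ref{thm:openSetInx}. Your direct factor-counting argument does not recover this comparison; to make it work you would essentially have to reprove the asymptotics of the Pochhammer symbols, which is the Stirling estimate in disguise.
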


\begin{proof}
This is a well known result (see, for
instance~\cite[Lemma~1]{ohara-takayama}
and~\cite[Theorem~4]{GG99}). We sketch the proof here for the 
readers' convenience. 

Let $\Gamma$ denote the Euler gamma function. Then 
\begin{equation}
\label{eqn:gammaSeries}
\frac{1}{\prod_{i=1}^n \Gamma(\alpha_i+1)} \ \varphi(x;\alpha) = 
\sum_{u \in S} \frac{1}{\prod_{i=1}^n \Gamma(\alpha_i+u_i+1)} \ x^{\alpha+u}.
\end{equation}

Multiply and divide on the right hand side to obtain
\[
\sum_{u\in S} \prod_{j \in \sigma }\frac{\Gamma(\bar\alpha_j + u_j+1)
}{\Gamma(\alpha_j+u_j+1) } \
\frac{1}{\prod_{i=1}^n \Gamma(\bar\alpha_i + u_i+1)} \ x^{\alpha+u},
\]
and consider the factor $\prod_{j \in \sigma }\frac{\Gamma(\bar\alpha_j + u_j+1)
}{\Gamma(\alpha_j+u_j+1) }$. 
Fix a compact set $K\subset (\CC\minus \ZZ)^\sigma$ and suppose
$\alpha \in K$.
Since the elements $u\in S$ lie in the pointed
cone $C^*$ from~\eqref{eqn:Cdual}, the behavior of $u_j$ for $j\in \sigma$ is constrained:
either $|u_j|$ is bounded for all $u \in S$, or $u_j$ is bounded below
and $u_j \to \infty$, or $u_j$ is bounded above and 
$u_j \to -\infty$. For $\alpha \in K$ and sufficiently large $t$,
$\Gamma(\bar\alpha_j+t+1)/\Gamma(\alpha_j+t+1)$ is approximately one, for
instance, by Stirling's approximation. Therefore, the series on the right hand
side of~\eqref{eqn:gammaSeries} can be bounded term by term in
absolute value by the series $\sum_{u\in S} x^{\alpha+u} /
\prod_{i=1}^n \Gamma(\bar\alpha_i+u_i+1)$, and this has the same domain of
convergence in $x$ as the series $\sum_{u\in S} x^{\bar\alpha+u} /
\prod_{i=1}^n \Gamma(\bar\alpha_i+u_i+1)$; recall that this domain $V$ was
found in Theorem~\ref{thm:openSetInx}. 

Moreover, this argument shows that for each fixed $x \in V$, the
partial sums of~\eqref{eqn:gammaSeries} 
converge absolutely and uniformly for $\alpha$ on
compact subsets of $(\CC\minus \ZZ)^\sigma$. As each partial sum is
holomorphic in $\alpha$, a standard application of Morera's theorem
yields that the limit is holomorphic in $\alpha$ as well.
\end{proof}

We remark that if $\beta \in \mathscr{S}_0$, where $\mathscr{S}$ is
the stratification associated to the triangulation $\Delta_w$
constructed in Section~\ref{sec:intro}, all solutions of
$D/\HA$ with respect to $w$ are logarithm-free~\cite[Proposition~3.4.4
and~Theorem~3.4.14]{SST}. Consequently,
Theorem~\ref{thm:logFreeSeriesHolomorphic} proves
Theorem~\ref{thm:mainHomogeneous} for the codimension zero stratum of $\mathscr{S}$.

\subsection{Series of Horn type}
\label{ssec:HornSeries}

In this subsection we consider logarithm free series of Horn type,
which are related to $A$-hypergeometric series. Horn series are a key
ingredient in proving the results of Section~\ref{sec:LogSeriesHolomorphic}.

Let $B_1,\dots,B_n \in \ZZ^m$ denote the rows of a rank $m$, $n\times m$ matrix
$B=[b_{jk}]$ such that $A \cdot B = 0$, where $m=n-d$. 
For $k=1,\dots, m$, let us define polynomials in the variables $\mu =
(\mu_1,\dots,\mu_m)$ and parameters $(\alpha_1,\dots,\alpha_n)$ by 
\begin{equation}
\label{eqn:HornEquations}
P_k(\mu; \alpha) 
 = \prod_{b_{jk}>0} \prod_{\ell=0}^{b_{jk}-1} 
  (B_j \cdot \mu + \alpha_j - \ell)
\quad\text{and}\quad 
Q_k(\mu; \alpha) 
  = \prod_{b_{jk}<0} \prod_{\ell=0}^{|b_{jk}|-1}
  (B_j \cdot \mu + \alpha_j - \ell).
\end{equation}

For $\mu \in \NN^n$, let 

\begin{equation}
\label{eqn:hornSeries}
R_\mu(\alpha) =   \prod_{\ell=1}^m
  \prod_{j_\ell=0}^{\mu_\ell-1} \frac{
  P_\ell(\mu_1,\dots,\mu_{\ell-1},
  j_\ell,0,\dots,0)}{
  Q_\ell(\mu_1,\dots,\mu_{\ell-1},
  j_\ell+1,0,\dots,0)} \; ;
\quad 
\Phi_B(z;\alpha) =
  \sum_{\mu \in \NN^n} R_\mu(\alpha) z^\mu.
\end{equation}

Note that the $R_\mu(\alpha)$ are rational functions in $\alpha$. For
$\mu \in \NN^n$, the poles of $R_\mu(\alpha)$ occur when some
coordinates of $\alpha$ take specific integer values.

The proof of Theorem~\ref{thm:logFreeSeriesHolomorphic} applies to
show the following.

\begin{theorem}
\label{thm:HornSeriesConverge}
Let $Y=\{ \alpha \in \CC^n \mid  R_\mu(\alpha) \text{ has no poles }
\forall \mu \in \NN^n \} \supset (\CC \minus \ZZ)^n$. There exists a polydisc $W \subset \CC^m$
centered at the origin such that $\Phi_B(z;\alpha)$ is holomorphic for
$(z,\alpha) \in W \times Y$. The polydisc $W$ depends only on the
matrix $B$, and not on the parameters $\alpha$.
\qed
\end{theorem}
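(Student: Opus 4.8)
The plan is to imitate the proof of Theorem~\ref{thm:logFreeSeriesHolomorphic} almost verbatim, with the coefficients $R_\mu(\alpha)$ playing the role of the logarithm-free $A$-hypergeometric coefficients. The first step is to rewrite $R_\mu(\alpha)$ through the Gamma function. Applying the identity $\prod_{\ell=0}^{N-1}(s-\ell)=\Gamma(s+1)/\Gamma(s-N+1)$ to each linear factor $B_j\cdot\mu+\alpha_j-\ell$ occurring in $P_k$ and $Q_k$ in~\eqref{eqn:HornEquations}, and telescoping the iterated product in~\eqref{eqn:hornSeries} (the cancellation that makes this telescope is exactly what the relation $A\cdot B=0$ provides), a direct computation yields the closed form
\begin{equation*}
R_\mu(\alpha)=\prod_{j=1}^n\frac{\Gamma(\alpha_j+1)}{\Gamma(B_j\cdot\mu+\alpha_j+1)}.
\end{equation*}
Because $1/\Gamma$ is entire, this displays the poles of $R_\mu$ as coming only from the factors $\Gamma(\alpha_j+1)$; in particular $R_\mu$ is holomorphic on $Y$, and $Y$ is open since the excluded set is a locally finite union of the hyperplanes $\{\alpha_j\in\ZZ_{<0}\}$.

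Next I would fix a reference parameter $\bar\alpha\in(\CC\minus\ZZ)^n\subseteq Y$ and factor
\begin{equation*}
R_\mu(\alpha)=R_\mu(\bar\alpha)\cdot\prod_{j=1}^n\frac{\Gamma(\alpha_j+1)}{\Gamma(\bar\alpha_j+1)}\cdot\frac{\Gamma(B_j\cdot\mu+\bar\alpha_j+1)}{\Gamma(B_j\cdot\mu+\alpha_j+1)}.
\end{equation*}
Fixing a compact set $K\subset Y$ and $\alpha\in K$, I would bound each quotient $\Gamma(B_j\cdot\mu+\bar\alpha_j+1)/\Gamma(B_j\cdot\mu+\alpha_j+1)$ according to the behaviour of the integer $B_j\cdot\mu$ as $\mu\to\infty$ in $\NN^m$. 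If $B_j\cdot\mu$ stays bounded, the quotient is bounded uniformly on $K$; if $B_j\cdot\mu\to+\infty$, Stirling's approximation gives growth at most like $(B_j\cdot\mu)^{\bar\alpha_j-\alpha_j}$, i.e.\ polynomial in $|\mu|$ uniformly for $\alpha\in K$; and if $B_j\cdot\mu\to-\infty$ I would first apply the reflection formula $\Gamma(s)\Gamma(1-s)=\pi/\sin(\pi s)$ to replace both arguments by ones tending to $+\infty$, after which Stirling again yields polynomial growth, while the resulting ratio of sines reduces (since $B_j\cdot\mu\in\ZZ$) to $\pm\sin(\pi\alpha_j)/\sin(\pi\bar\alpha_j)$, which is bounded on $K$. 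The upshot is a uniform estimate $|R_\mu(\alpha)|\le C_K\,(1+|\mu|)^N\,|R_\mu(\bar\alpha)|$ for all $\alpha\in K$ and all $\mu$.

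It then remains to produce the polydisc $W$ and to check that it does not depend on $\alpha$. Here the point is that the radius of convergence of $\sum_\mu R_\mu(\bar\alpha)\,z^\mu$ in the $z_k$-direction is governed by $|R_{\mu+e_k}(\bar\alpha)/R_\mu(\bar\alpha)|$, which by the recursion underlying~\eqref{eqn:hornSeries} equals, up to lower-order terms, the ratio of the leading coefficients of $P_k$ and $Q_k$ as polynomials in $\mu$; these leading coefficients are $\prod_{b_{jk}>0}(B_j\cdot\mu)^{b_{jk}}$ and $\prod_{b_{jk}<0}(B_j\cdot\mu)^{|b_{jk}|}$ and involve only $B$, not $\alpha$. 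Consequently there is a polydisc $W\subset\CC^m$, centered at the origin and depending only on $B$, on which $\sum_\mu(1+|\mu|)^N|R_\mu(\bar\alpha)|\,|z^\mu|$ converges, the polynomial factor not affecting the polyradius. Combined with the estimate of the previous paragraph, $\Phi_B(z;\alpha)=\sum_\mu R_\mu(\alpha)\,z^\mu$ converges absolutely and uniformly on $W'\times K$ for every compact $W'\subset W$ and every compact $K\subset Y$.

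Finally, each partial sum $\sum_{|\mu|\le M}R_\mu(\alpha)\,z^\mu$ is a polynomial in $z$ whose coefficients are holomorphic in $\alpha$ on $Y$, hence jointly holomorphic on $W\times Y$; the uniform convergence on compacta just established then gives holomorphy of the limit $\Phi_B$ on $W\times Y$ by Morera's theorem applied in each variable separately (equivalently, by Osgood's lemma). The step I expect to be the main obstacle is the uniform bound of the second paragraph in the case $B_j\cdot\mu\to-\infty$: this is precisely the phenomenon that does not arise in Theorem~\ref{thm:logFreeSeriesHolomorphic}, where the summation index ranges over the pointed cone $S\subset C^*$ of~\eqref{eqn:Cdual} and the arguments are sign-constrained, whereas here $\mu$ ranges over the full orthant $\NN^m$. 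Controlling this case rests on the reflection formula together with the fact that membership in $Y$ keeps all the Gamma arguments away from their poles; everything else is the bookkeeping already carried out in the logarithm-free case.
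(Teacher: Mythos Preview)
Your approach is exactly what the paper intends: it simply asserts that ``the proof of Theorem~\ref{thm:logFreeSeriesHolomorphic} applies,'' and you have carried out that program, correctly isolating the one extra wrinkle (the case $B_j\cdot\mu\to-\infty$, which does not occur in Theorem~\ref{thm:logFreeSeriesHolomorphic} because the summation there is over the pointed cone $C^*$) and handling it with the reflection formula.

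One correction, however: your closed form
\[
R_\mu(\alpha)=\prod_{j=1}^n\frac{\Gamma(\alpha_j+1)}{\Gamma(B_j\cdot\mu+\alpha_j+1)}
\]
is not right as stated. A direct check from~\eqref{eqn:hornSeries} with $m=1$ and $B=(1,-1)^T$ gives $R_1=\alpha_1/(\alpha_2-1)$, whereas your formula produces $\alpha_2/(\alpha_1+1)$; in general the shifts in the Gamma arguments depend on the sign pattern of the entries $b_{jk}$ and do not collapse to a single uniform expression. (Relatedly, the telescoping that produces the Gamma representation is a formal consequence of the definition of $R_\mu$ and does not use $A\cdot B=0$.) This error is not fatal: what your argument actually requires is only that $R_\mu(\alpha)$ be a product of Gamma ratios whose arguments are affine in $(\mu,\alpha)$, and that structural fact suffices for the Stirling and reflection estimates, the polynomial-in-$|\mu|$ comparison with $R_\mu(\bar\alpha)$, and the Morera conclusion to go through unchanged.
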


\section{Logarithmic $A$-hypergeometric series are continuous in the parameters}
\label{sec:LogSeriesParametric}

$A$ is assumed to be homogeneous throughout this section.
We already know from the previous section that a canonical
logarithm free $A$-hy\-per\-geo\-me\-tric series can be regarded as a function
of its parameters.  
Since we lack such explicit expressions for logarithmic
$A$-hypergeometric series in general (\cite{adolphson-sperber}
provides combinatorial expressions for logarithmic $A$-hypergeometric
series under special conditions), 
it is nontrivial to see that the  
coefficients in such a series depend continuously on the parameters.
The goal of this section is to prove Theorem~\ref{thm:perturbParameterLogSeries}, 
which implies that for canonical logarithmic $A$-hypergeometric series
this dependence is indeed continuous.
We begin by describing the general form of such a series.

Let $w\in\RR_{>0}^n$ be a weight vector, 
and consider the cones $C$ and $C^*$ from~\eqref{eqn:C} and \eqref{eqn:Cdual}. 
Since $w$ is generic, 
$C^*$ is a convex, pointed rational polyhedral cone with nonempty interior.

Let $\bar \beta \in \CC^d$, and let $\bar \alpha$ be an exponent of
$H_A(\bar{\beta})$ with respect to the weight vector $w$. 
In particular, $A\cdot \bar \alpha = \bar \beta$. We can write
the canonical series solution of $H_A(\bar{\beta})$ corresponding to
$\bar \alpha$ as follows;
\begin{align}\label{eqn:barvarphi}
\varphi(x; \bar\alpha) = x^{\bar{\alpha}} \sum_{u \in C^* \cap \ker_{\ZZ}(A)} 
  p_u(\log(x); \bar \alpha)\, x^u 
\end{align}
where  $p_u(\log(x); \bar\alpha)$ is a polynomial
in the $n$ variables $\log(x_1),\dots,\log(x_n)$ (whose coefficients
depend on $\bar \alpha$).
It is known that the degree in $\log(x)$ of the polynomial $p_u$ is bounded by $n$
times the rank of $D/H_A(\bar{\beta})$
(see~\cite[Theorem~2.5.14]{SST}). We remark that
$\rank(D/H_A(\bar{\beta})) \leq 2^{2d} \vol(A)$
by~\cite[Corollary~4.1.2]{SST}; this gives the parameter-independent
bound $n 2^{2d} \vol(A)$ for the degree of the polynomials $p_u$.

Since $C$ is open and $w\in C$, for any $T>0$, 
the set $\{ u \in C^* \mid u \cdot w \leq T \}$ is bounded.
The set $\{ u \in C^*\cap\ker_{\ZZ}(A) \mid p_u \neq 0\}$ 
is called the \emph{support} of $\varphi$.

Assume that $\bar\alpha_1 \notin \ZZ$. In the following result, 
we use $\alpha$ to denote a vector all of whose coordinates coincide
with those of $\bar\alpha$ except possibly the first one.

\begin{theorem}
\label{thm:perturbParameterLogSeries}
Fix a weight vector $w$, and parameter $\bar{\beta}\in\CC^d$.
Let $\varphi(x;\bar\alpha)$ be the logarithmic canonical series solution~\eqref{eqn:barvarphi}.
Suppose $\bar{\alpha}_1 \notin \ZZ$. Then there exist a neighborhood $W$
of $\bar{\alpha}_1$, and $p_u(y_1,\dots,y_n;\alpha) \in \CC(\alpha_1) [y_1,\dots,y_n]$
for $u \in C^* \cap \ker_\ZZ(A)$, such that for $\alpha_1 \in W$, the series
$ \varphi(x;\alpha) = x^\alpha \sum_{u \in C^* \cap \ker_\ZZ(A)}
p_u(\log(x);\alpha) x^u$ is a canonical series solution of
$H_A(A\cdot\alpha)$ with respect to $w$ (and therefore converges for
$x\in V$, where $V$ is the open set from
Theorem~\ref{thm:openSetInx}).
Moreover when
$\alpha$ is set to $\bar\alpha$, $\varphi(x;\alpha)$ specializes to ~\eqref{eqn:barvarphi}.
\end{theorem}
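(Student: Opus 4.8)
The plan is to fix $\bar\alpha$ and write $\alpha = \bar\alpha + (\alpha_1 - \bar\alpha_1)e_1$, so that $\beta(\alpha_1) = A\cdot\alpha = \bar\beta + (\alpha_1 - \bar\alpha_1)a_1$ depends affine-linearly on $\alpha_1$, where $a_1$ is the first column of $A$. The whole statement then reduces to showing that the coefficients of the canonical series solution with exponent $\alpha$ are produced by a recursion whose solution is rational in $\alpha_1$ and holomorphic on a neighborhood $W$ of $\bar\alpha_1$. Once this is established, $\varphi(x;\alpha)$ is by construction a canonical series solution of $H_A(A\cdot\alpha)$, hence converges on the set $V$ of Theorem~\ref{thm:openSetInx}, and its specialization at $\alpha_1=\bar\alpha_1$ returns~\eqref{eqn:barvarphi} by the uniqueness built into Definition~\ref{def:canonicalSeries}. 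To keep the combinatorial shape of the recursion uniform in the parameter I would run the construction of~\cite[Section~2.6]{SST} using the comprehensive Gr\"obner basis of $H_A(\beta)$ with respect to $(-w,w)$ produced in the proof of Theorem~\ref{thm:openSetInx}.

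I would first record how the two families of operators act on a term $p_u(\log x)\,x^{\gamma}$, where $y_j = \log(x_j)$ and $\gamma = \alpha+u$ for $u\in C^*\cap\ker_\ZZ(A)$. Since $A\gamma = A\alpha = \beta$, a direct computation gives $(E_i-\beta_i)\bullet(p_u\,x^\gamma) = (\theta_i p_u)\,x^\gamma$, where $\theta_i = \sum_{j}a_{ij}\partial_{y_j}$ is a constant-coefficient operator \emph{independent of $\alpha$}; thus the Euler operators act within each $x$-power and force every $p_u$ into the $\alpha$-independent subspace $\mathcal P = \bigcap_i\ker\theta_i$. The binomial operators $\partial^a-\partial^b\in I_A$ instead shift $x$-powers, sending $p_u\,x^\gamma$ to $x^{\gamma-a}$ and $x^{\gamma-b}$ times log-polynomials obtained from $p_u$ by the falling-factorial operators $\prod_{j}\prod_{\ell=0}^{a_j-1}(\partial_{y_j}+\gamma_j-\ell)$, whose coefficients are polynomials in $\gamma$, hence polynomials in $\alpha_1$ (the remaining coordinates of $\gamma$ being the fixed integers $\bar\alpha_j+u_j$). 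Collecting terms by $x$-power yields a recursion that is triangular with respect to the weight $u\cdot w$: because $w\in C$ and $C$ is open, $\{u\in C^*\mid u\cdot w\le T\}$ is finite, so the recursion is well-founded, and by~\cite[Theorem~2.5.14]{SST} the $p_u$ have $\log$-degree bounded by the parameter-independent constant $n\,2^{2d}\vol(A)$, so at each step I solve a linear system in a \emph{fixed} finite-dimensional space.

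The crux, which I would address next, is to show that this finite-dimensional system can be solved rationally in $\alpha_1$ with no pole at $\bar\alpha_1$, and that its solvability pattern — in particular which steps introduce logarithms — does not change as $\alpha_1$ varies near $\bar\alpha_1$. Here $\bar\alpha_1\notin\ZZ$ is decisive. The indicial (diagonal) operator governing the step at $x$-power $x^\gamma$ is built from the distractions $\prod_{j}\prod_{\ell=0}^{a_j-1}(\partial_{y_j}+\gamma_j-\ell)$ of the generators of $\inw(I_A)$; such a factor is invertible on log-polynomials precisely when its constant term $\prod_{\ell}(\gamma_j-\ell)$ is nonzero, i.e.\ when $\gamma_j\notin\{0,\dots,a_j-1\}$, and it is exactly the vanishing of these constant terms at integer coordinates of $\gamma$ that produces the nilpotent, logarithm-creating part. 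Since $\gamma_1=\alpha_1+u_1\notin\ZZ$ for $\alpha_1$ near $\bar\alpha_1$, the first coordinate never triggers such a degeneration (and for the same reason the logarithmic part of the starting monomial $\inw(\varphi)$ is governed by the fixed coordinates and is unchanged); all degeneracies come from the coordinates $j\neq 1$, whose values $\bar\alpha_j+u_j$ are \emph{fixed}. Consequently the kernels and cokernels of the indicial operators — and hence the ranks appearing in the recursion together with the compatibility conditions at the logarithmic steps — are locally constant in $\alpha_1$; the $\alpha_1$-dependence enters only through the invertible first-coordinate factors $\prod_\ell(\alpha_1+u_1-\ell)$, whose reciprocals are holomorphic off $\ZZ$. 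Solving by Cramer's rule then yields $p_u\in\CC(\alpha_1)[y_1,\dots,y_n]$ with poles in $\alpha_1$ only at integers, while the kernel freedom (which corresponds to canonical series with smaller starting monomials) is removed, uniformly in $\alpha_1$, by condition~(3) of Definition~\ref{def:canonicalSeries}. Taking $W\subset\CC\minus\ZZ$ a neighborhood of $\bar\alpha_1$ avoiding these poles gives the claim.

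I expect the \emph{main obstacle} to be precisely this local constancy of the logarithmic structure: proving rigorously that perturbing $\alpha_1$ within $\CC\minus\ZZ$ alters neither the set of resonant steps nor the ranks of the associated indicial operators, so that the canonical normalization can be transported rationally in $\alpha_1$ rather than splitting into cases. The computations that the Euler and toric operators have coefficients polynomial in $\alpha_1$, the well-foundedness of the weight recursion, and the deductions of convergence on $V$ and of the specialization at $\bar\alpha_1$ are comparatively routine once this stability is in hand.
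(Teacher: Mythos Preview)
Your setup and the triangular, finite-dimensional recursion match the paper's. The divergence, and the genuine gap, is at exactly the point you flag as the main obstacle: you assert that the kernels and cokernels of the indicial operators are locally constant in $\alpha_1$ because the only $\alpha_1$-dependent factors are the invertible $\prod_\ell(\alpha_1+u_1-\ell)$. That analysis covers only the distractions of monomials in $\inw(I_A)$ together with the Eulers, i.e.\ the fake indicial ideal, but the canonical-series recursion is governed by the full $\inww(H_A(\beta))$. Elements of the comprehensive Gr\"obner basis may have $\alpha_1$-dependent leading coefficients, so $\inww(H_A(A\alpha))$ itself---and with it the shape of the linear systems---can jump as $\alpha_1$ moves; you have not excluded this, and your proposal offers no mechanism to do so.

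The paper sidesteps this via a shift-and-Zariski-density argument powered by Proposition~\ref{prop:antiderivative}. Since $\bar\alpha_1\notin\ZZ$, termwise $\del_1$-antiderivatives yield, for every $k\in\NN$, a canonical series solution of $H_A(\bar\beta+ka_1)$ with the \emph{same} support and log-degrees as $\varphi$, and $\del_1^k\varphi$ does the same for $H_A(\bar\beta-ka_1)$. Thus the finite linear systems for the $p_u$, whose augmented matrices have entries polynomial in $\alpha_1$, are consistent at $\alpha_1=\bar\alpha_1+k$ for all $k\in\ZZ$; their solvability locus is Zariski-closed in $\CC$ and infinite, hence all of $\CC$, and rationality of the $p_u$ follows by Cramer's rule. (The shift is also used first to reach a $k$ at which no Gr\"obner-basis leading coefficient vanishes, and the result is then pulled back by $\del_1^k$.) This trades your unproven rank-constancy for the much softer statement ``solvable at infinitely many points, hence everywhere'', which requires no control over how the initial ideal varies with $\alpha_1$.
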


To prepare for the proof of Theorem~\ref{thm:perturbParameterLogSeries}, we prove the following
technical result;
while it is easy to see that the derivative of an $A$-hypergeometric
series is $A$-hypergeometric (with a shifted parameter), we provide a
sufficient condition for a possibly logarithmic $A$-hypergeometric
series to have an $A$-hypergeometric antiderivative. 

\begin{proposition}
\label{prop:antiderivative}
Let $\varphi(x; \bar\alpha)$ be a (logarithmic) canonical series
solution of $H_A(\bar{\beta})$ with respect to a weight vector $w$, 
and let $a_1$ denote the first column of $A$. 
If $\bar{\alpha}_1 \notin \ZZ$, 
then there exists a canonical solution $\psi$ of
$H_A(\bar{\beta}+a_1)$ with respect to $w$ such that $\del_1\psi =
\varphi$. 
\end{proposition}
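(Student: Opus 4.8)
The plan is to construct the antiderivative $\psi$ by integrating $\varphi$ term by term and to verify that the result is again a canonical series solution, this time of $H_A(\bar\beta+a_1)$. Write $\varphi(x;\bar\alpha) = x^{\bar\alpha}\sum_{u} p_u(\log x;\bar\alpha)\,x^u$ as in~\eqref{eqn:barvarphi}. Each term is a mixed monomial $c\, x^\gamma\log(x)^\delta$ with $\gamma = \bar\alpha+u$ and first coordinate $\gamma_1 = \bar\alpha_1 + u_1$. Since $\bar\alpha_1\notin\ZZ$, we have $\gamma_1\neq 0$ for \emph{every} term appearing in $\varphi$, which is precisely what makes an antiderivative in $x_1$ well defined without introducing a $\log(x_1)$ obstruction. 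The natural candidate is therefore to define $\psi$ by applying a formal ``$\del_1^{-1}$'' to each mixed monomial: integrating $x_1^{\gamma_1}\log(x_1)^{\delta_1}$ against $x_1$ produces a finite $\CC$-linear combination of terms $x_1^{\gamma_1+1}\log(x_1)^{j}$ with $j\le \delta_1$, coming from repeated integration by parts, with denominators that are powers of $\gamma_1+1 = \bar\alpha_1+u_1+1$. Because $\bar\alpha_1\notin\ZZ$, none of these denominators vanish, so $\psi$ is a well-defined element of $N_w$.

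Next I would check the three items needed to conclude $\psi$ is a canonical series solution of $H_A(\bar\beta+a_1)$. For the \textbf{formal solution} property, the key point is that $\del_1\psi = \varphi$ by construction, together with the standard fact that antidifferentiation shifts the Euler parameter: the monomial $x_1\partial_1$ acts on $x^\gamma$ by multiplication by $\gamma_1$, so $\del_1$ intertwines $H_A(\bar\beta+a_1)$ with $H_A(\bar\beta)$. Concretely, if $P\in H_A(\bar\beta+a_1)$, one checks that $\del_1$ maps solutions of $H_A(\bar\beta+a_1)$ to solutions of $H_A(\bar\beta)$ and, conversely, that $\psi$ annihilated by the shifted toric and Euler operators follows from $\varphi$ being annihilated by the unshifted ones; the toric operators commute appropriately with $\del_1$, and the Euler operators $E_i-(\bar\beta_i+a_{i1})$ applied to $\psi$ reduce, after the shift in the first coordinate of each exponent, to $E_i-\bar\beta_i$ applied to $\varphi$. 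For the support condition, since integration sends $x^u$ to a combination of terms with exponent $u$ shifted by $e_1$ in the first coordinate (the lattice part unchanged modulo the fixed shift $a_1=Ae_1$), the support of $\psi$ is again contained in $C^*\cap\ker_\ZZ(A)$ after translating the base exponent to $\bar\alpha+e_1$, which satisfies $A(\bar\alpha+e_1)=\bar\beta+a_1$.

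The remaining and \textbf{most delicate} step is to verify conditions~(2) and~(3) of Definition~\ref{def:canonicalSeries}: that $\inw(\psi)$ is a single mixed monomial, and that it is the unique ``starting'' monomial from the set~\eqref{eqn:Start} appearing in $\psi$. Here I would argue that because $w\in\RR^n_{>0}$ and antidifferentiation in $x_1$ raises the first exponent by $1$, the $w$-weight $\Re(\gamma\cdot w)$ of each resulting term increases by exactly $w_1$ relative to the corresponding term of $\varphi$; hence $\inw(\psi)$ is obtained from $\inw(\varphi)$ by the same term-by-term integration, and since $\inw(\varphi)$ is a single mixed monomial with $\gamma_1=\bar\alpha_1\notin\ZZ$, its antiderivative is again a single mixed monomial (the integration-by-parts sum collapses because $\log(x_1)^{\delta_1}$ with $\delta_1=0$ in the first slot of the initial term—or, if $\delta_1>0$, one tracks the top-degree term, which survives uniquely). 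The uniqueness in~(3) then follows from the uniqueness for $\varphi$ together with the fact that $\del_1$ is injective on $N_w$ restricted to terms with $\gamma_1\notin\ZZ_{\le 0}$. I expect the bookkeeping of how the initial series transforms under integration—especially controlling that no new minimal-weight term is created and that the starting monomial stays unique—to be the main obstacle, and I would handle it by exploiting that $\del_1$ and the passage to $\inw$ commute on the relevant class of series, reducing the claim for $\psi$ to the already-known canonical structure of $\varphi$.
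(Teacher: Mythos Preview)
Your approach is essentially the paper's: construct $\psi$ by applying a term-by-term $\del_1^{-1}$ (the paper isolates existence and uniqueness of this as Lemma~\ref{lemma:antilog}), then verify that $\psi$ is annihilated by the shifted Euler and toric operators (the paper packages this as Lemmas~\ref{lemma:fixEulers} and~\ref{lemma:commutingDerivatives}, both consequences of the uniqueness of the termwise antiderivative rather than of commutation with $\del_1$ alone---note that from $\del_1 X=0$ you still need injectivity of $\del_1$ on series with $x_1$-exponent off $\ZZ$ to conclude $X=0$, which you only invoke later).

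The one place where your write-up diverges is the canonicity check. Your direct analysis of $\inw(\psi)$ runs into the issue you flag: if the starting monomial of $\varphi$ has $\delta_1>0$, integration by parts produces several mixed monomials $x^{\bar\alpha+e_1}\log(x)^{\delta'}$ with the \emph{same} minimal $w$-weight, so $\inw(\psi)$ need not be a single mixed monomial on the nose, and ``tracking the top-degree term'' does not match Definition~\ref{def:canonicalSeries}(2). The paper sidesteps this entirely: it simply asserts that canonicity of $\psi$ follows from the equality $\del_1\psi=\varphi$. The mechanism is exactly the injectivity of $\del_1$ you mention at the end---$\del_1$ gives a bijection between starting monomials (and hence canonical solutions) of $H_A(\bar\beta+a_1)$ and $H_A(\bar\beta)$ with exponents off the integers in the first slot, so $\varphi$ canonical forces $\psi$ canonical. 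That route is both shorter and avoids the $\delta_1>0$ bookkeeping.
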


\begin{lemma}{\cite[Lemma~3.12]{nilsson}}
\label{lemma:antilog}
Let $p$ be a polynomial in $n$ variables 
and $\bar{\alpha} \in \CC^n$ with $\bar{\alpha}_1\neq -1$. 
Then, there exists a unique polynomial $q$ in $n$ variables such that $\deg(q)=\deg(p)$ and 
\[
\hspace{153pt}
\del_1 x_1 x^{\bar{\alpha}}\, q(\log(x)) = x^{\bar{\alpha}}\,p(\log(x)).
\hspace{145pt}\square
\]
\end{lemma}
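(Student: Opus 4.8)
The plan is to strip away the transcendental factors and reduce the asserted identity to a purely algebraic statement about a single linear operator on the polynomial ring $\CC[y_1,\dots,y_n]$, where $y_i$ stands for $\log(x_i)$. First I would compute the left-hand side directly. Writing $x_1 x^{\bar\alpha}\, q(\log x) = x_1^{\bar\alpha_1+1}\big(\textstyle\prod_{j\geq 2} x_j^{\bar\alpha_j}\big)\,q(\log x)$ and applying $\del_1 = \del_{x_1}$ by the product and chain rules (using $\del_{x_1}\log x_1 = 1/x_1$, and noting that $q$ depends on $x_1$ only through $\log x_1$), one obtains
\[
\del_1\big(x_1 x^{\bar\alpha}\,q(\log x)\big) = x^{\bar\alpha}\big[(\bar\alpha_1+1)\,q + \del_{y_1} q\big](\log x).
\]
Cancelling the common nonvanishing factor $x^{\bar\alpha}$, the identity in the statement is therefore equivalent to the polynomial equation $Lq = p$, where $L \defeq (\bar\alpha_1+1)\,\mathrm{id} + \del_{y_1}$ acts on $\CC[y_1,\dots,y_n]$. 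The hypothesis $\bar\alpha_1 \neq -1$ is precisely the condition that $\lambda \defeq \bar\alpha_1 + 1 \neq 0$, so the task becomes proving that $L$ is a degree-preserving bijection of $\CC[y_1,\dots,y_n]$ whenever $\lambda\neq 0$.

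For existence I would exploit that $\del_{y_1}$ is \emph{locally nilpotent}: for any polynomial $p$ one has $\del_{y_1}^{k} p = 0$ as soon as $k > \deg_{y_1} p$. Writing $L = \lambda\big(\mathrm{id} + \lambda^{-1}\del_{y_1}\big)$, the formal Neumann inverse $\lambda^{-1}\sum_{k\geq 0}(-\lambda^{-1})^{k}\del_{y_1}^{k}$ terminates after finitely many terms when applied to $p$, so
\[
q \defeq \lambda^{-1}\sum_{k=0}^{\deg p}(-\lambda^{-1})^{k}\,\del_{y_1}^{k}\,p
\]
is a genuine polynomial and satisfies $Lq = p$. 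For uniqueness I would compare $y_1$-degrees: if $Lq = 0$ with $q\neq 0$, then $\lambda q = -\del_{y_1}q$, but the left-hand side has $y_1$-degree equal to $\deg_{y_1} q$ (since $\lambda\neq 0$) while the right-hand side has strictly smaller $y_1$-degree, a contradiction. Hence $\ker L = 0$ and, together with existence, $L$ is a bijection and $q$ is unique.

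Finally, for the degree equality, I note that $\del_{y_1}$ strictly lowers total degree while $\lambda\cdot\mathrm{id}$ preserves it, so the top-total-degree part of $Lq$ equals $\lambda$ times that of $q$; as $\lambda\neq 0$ this gives $\deg(Lq) = \deg(q)$, and applying this to $q = L^{-1}p$ yields $\deg q = \deg p$. I do not anticipate a serious obstacle: the only step demanding real care is the opening differentiation, where one must track that $q$'s $x_1$-dependence enters solely through $\log x_1$, so that the chain rule contributes exactly the $\del_{y_1}q$ term and no more. Once the reduction to $Lq = p$ is secured, the local nilpotence of $\del_{y_1}$ makes existence (a finite Neumann series) and uniqueness (the $y_1$-degree comparison) both immediate.
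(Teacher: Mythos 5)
Your proof is correct. Be aware, though, that there is essentially nothing in this paper to compare it against: the lemma carries a terminal $\square$ because it is imported verbatim from \cite[Lemma~3.12]{nilsson}, so the paper offers only a citation, and your argument supplies a self-contained proof. The reduction is exactly right: since $q$ sees $x_1$ only through $\log x_1$, the product and chain rules give $\del_1\bigl(x_1\,x^{\bar\alpha}\,q(\log x)\bigr)=x^{\bar\alpha}\bigl[(\bar\alpha_1+1)\,q+\del_{y_1}q\bigr](\log x)$, so the lemma amounts to the operator $L=(\bar\alpha_1+1)\,\mathrm{id}+\del_{y_1}$ being a degree-preserving bijection of $\CC[y_1,\dots,y_n]$, which your terminating Neumann series (existence), $y_1$-degree comparison (injectivity), and top-total-degree comparison (degree preservation) establish correctly; the hypothesis $\bar\alpha_1\neq -1$ enters exactly where it should, as $\lambda=\bar\alpha_1+1\neq 0$. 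The only step I would ask you to make explicit is the cancellation of $x^{\bar\alpha}$: the displayed identity is one of germs of (multivalued) holomorphic functions, and to pass from it to the polynomial identity $Lq=p$ you should note that $(\log x_1,\dots,\log x_n)$ fills a nonempty open subset of $\CC^n$ as $x$ ranges over a domain avoiding the coordinate hyperplanes, and a polynomial vanishing on a nonempty open set vanishes identically. With that sentence added the proof is complete, and your uniqueness statement is precisely what makes the operator $\del_1^{-1}$ defined right after the lemma well defined, which is what the paper then uses in Lemmas~\ref{lemma:fixEulers} and~\ref{lemma:commutingDerivatives} and in the proof of Proposition~\ref{prop:antiderivative}.
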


With the notation of Lemma~\ref{lemma:antilog}, set
\[
\del_1^{-1} \big[x^{\bar{\alpha}}\, p(\log(x)) \big] 
  \defeq x_1\,x^{\bar{\alpha}}\, q(\log(x)).
\]

The next two results follow from the uniqueness in Lemma~\ref{lemma:antilog}.

\begin{lemma}
\label{lemma:fixEulers}
Let $p$ be a polynomial in $n$ variables 
and $\bar{\alpha} \in \CC^n$ with $\bar{\alpha}_1\neq -1$. 
If $x^{\bar{\alpha}}\,p(\log(x))$ is a solution of 
$\<E-\bar{\beta}\>$, 
then $\del_1^{-1}\,x^{\bar{\alpha}} \,p(\log(x))$ is a solution of 
$\<E-(\beta+a_1)\>$. 
\qed
\end{lemma}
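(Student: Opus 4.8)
The plan is to use the uniqueness statement in Lemma~\ref{lemma:antilog} together with a direct computation of how the Euler operators $E_i - \bar\beta_i$ interact with the antiderivative operator $\del_1^{-1}$. Recall that $E_i = \sum_{j=1}^n a_{ij} x_j \del_j$, and that the hypothesis says $x^{\bar\alpha} p(\log(x))$ is annihilated by each $E_i - \bar\beta_i$. I must show that $\psi \defeq \del_1^{-1}\,x^{\bar\alpha}\,p(\log(x)) = x_1 x^{\bar\alpha} q(\log(x))$ is annihilated by each $E_i - (\bar\beta_i + a_{i1})$; note the shifted parameter is $\bar\beta + a_1$, where $a_1$ is the first column of $A$, so its $i$-th coordinate is $a_{i1}$.

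The key observation is the commutation relation between the Euler operators and $\del_1$. Since $E_i$ is built from the degree-counting operators $x_j \del_j$, a direct computation gives $[E_i, \del_1] = -a_{i1}\,\del_1$, equivalently $\del_1 (E_i - \bar\beta_i - a_{i1}) = (E_i - \bar\beta_i)\,\del_1$. First I would apply both sides to $\psi$: using $\del_1 \psi = x^{\bar\alpha} p(\log(x))$ (which holds by Lemma~\ref{lemma:antilog}, since $\bar\alpha_1 \neq -1$), the right-hand side gives $(E_i - \bar\beta_i)\bigl(x^{\bar\alpha} p(\log(x))\bigr) = 0$ by hypothesis. Hence $\del_1\bigl[(E_i - \bar\beta_i - a_{i1})\psi\bigr] = 0$ for every $i$.

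It remains to deduce $(E_i - \bar\beta_i - a_{i1})\psi = 0$ from the fact that $\del_1$ kills it. The quantity $(E_i - \bar\beta_i - a_{i1})\psi$ is again of the form $x^{\bar\alpha'} r(\log(x))$ for a polynomial $r$ — indeed, applying $E_i$ to a mixed term $x_1 x^{\bar\alpha} q(\log(x))$ preserves the exponent $(1,0,\dots,0) + \bar\alpha$ and produces another polynomial in $\log(x)$ of the same shape. Since $\bar\alpha_1 + 1 \neq 0$, the only such expression annihilated by $\del_1$ is the zero expression: if $\del_1\bigl[x_1 x^{\bar\alpha} r(\log(x))\bigr] = 0$, then by the uniqueness in Lemma~\ref{lemma:antilog} (applied to the zero polynomial, whose unique antiderivative datum is zero) we must have $r \equiv 0$. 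This forces $(E_i - \bar\beta_i - a_{i1})\psi = 0$ for all $i$, i.e. $\psi$ solves $\langle E - (\bar\beta + a_1)\rangle$, as desired.

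The main obstacle I expect is verifying the commutator identity $[E_i, \del_1] = -a_{i1}\del_1$ cleanly and confirming that the antiderivative operator does not disturb the graded structure — that is, that $(E_i - \bar\beta_i - a_{i1})$ applied to $x_1 x^{\bar\alpha} q(\log(x))$ indeed stays within the single mixed-monomial type $x_1 x^{\bar\alpha} \cdot (\text{polynomial in }\log(x))$, so that the uniqueness/injectivity argument of Lemma~\ref{lemma:antilog} applies verbatim. Once the commutation relation is in hand, the rest is a formal consequence of uniqueness, so this is genuinely a short argument rather than a computation-heavy one.
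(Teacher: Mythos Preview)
Your proposal is correct and is essentially the approach the paper has in mind: the paper simply asserts that Lemma~\ref{lemma:fixEulers} ``follows from the uniqueness in Lemma~\ref{lemma:antilog},'' and your argument via the commutator identity $\del_1(E_i-\bar\beta_i-a_{i1})=(E_i-\bar\beta_i)\del_1$ together with the injectivity consequence of that uniqueness is exactly the natural way to unpack this one-line justification.
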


\begin{lemma}
\label{lemma:commutingDerivatives}
Let $p$ be a polynomial in $n$ variables 
and $\bar{\alpha}\in \CC^n$ with $\bar{\alpha}_1\neq -1$. 
Then for any $\mu \in \NN^n$,
\[
\hspace{121pt}
\del^\mu \big[ \del_1^{-1}\, x^{\bar{\alpha}}\,
  p(\log(x)) \big] 
  = \del_1^{-1} \big[
  \del^\mu\, x^{\bar{\alpha}}\, p(\log(x)) \big].
\hspace{114pt}\square
\]
\end{lemma}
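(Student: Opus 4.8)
The plan is to deduce the identity from the uniqueness in Lemma~\ref{lemma:antilog}, using nothing more than the fact that ordinary partial derivatives commute in $D$. Write $f = x^{\bar\alpha}\,p(\log(x))$. The first step is to record how each $\del_j$ acts on the graded pieces $x^\gamma\,\CC[\log(x)]$: a direct application of the product and chain rules gives the operator identity
\[
\del_j\big[x^\gamma\,r(\log(x))\big] = x^{\gamma - e_j}\,\big[(\gamma_j + \del_{y_j})\,r\big](\log(x)),
\]
where $e_j$ is the $j$-th standard basis vector and $\del_{y_j}$ denotes differentiation in the formal variable $y_j$ standing for $\log(x_j)$. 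Consequently $\del^\mu$ carries $x^\gamma\,\CC[\log(x)]$ into $x^{\gamma-\mu}\,\CC[\log(x)]$, while by definition $\del_1^{-1}$ carries $x^\gamma\,\CC[\log(x)]$ into $x^{\gamma+e_1}\,\CC[\log(x)]$ whenever $\gamma_1 \neq -1$. In particular, both $\del^\mu\big[\del_1^{-1} f\big]$ and $\del_1^{-1}\big[\del^\mu f\big]$ are elements of the single graded piece $x^{(\bar\alpha-\mu)+e_1}\,\CC[\log(x)]$.

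The second step is to isolate the precise uniqueness statement hidden in Lemma~\ref{lemma:antilog}. The displayed identity shows that, for fixed $\gamma$ with $\gamma_1 \neq 0$, the assignment $r \mapsto \del_1\big[x^\gamma\,r(\log(x))\big]$ is $x^{\gamma-e_1}$ times $(\gamma_1 + \del_{y_1})\,r$; and the operator $\gamma_1 + \del_{y_1}$ is injective on $\CC[y_1,\dots,y_n]$ when $\gamma_1 \neq 0$, as one sees by inspecting the leading term in $y_1$. Thus an element of $x^\gamma\,\CC[\log(x)]$ (with $\gamma_1 \neq 0$ fixed) is determined by its image under $\del_1$. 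This is exactly the uniqueness we will invoke, applied with base exponent $\gamma = (\bar\alpha-\mu)+e_1$, whose first coordinate is $\bar\alpha_1 - \mu_1 + 1$.

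With these in hand the argument is immediate. Applying $\del_1$ to the first candidate and commuting the ordinary derivatives,
\[
\del_1\,\del^\mu\big[\del_1^{-1} f\big] = \del^\mu\,\del_1\big[\del_1^{-1} f\big] = \del^\mu f,
\]
while $\del_1\,\del_1^{-1}\big[\del^\mu f\big] = \del^\mu f$ directly from the definition of $\del_1^{-1}$. By the first step both expressions lie in $x^{(\bar\alpha-\mu)+e_1}\,\CC[\log(x)]$, and they have the same image under $\del_1$; provided $\bar\alpha_1 - \mu_1 + 1 \neq 0$, the uniqueness of the second step forces them to coincide, which is the claim.

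I expect the only genuine subtlety to be the bookkeeping of exponents: verifying that $\del^\mu\,\del_1^{-1}$ and $\del_1^{-1}\,\del^\mu$ land in the same space $x^{(\bar\alpha-\mu)+e_1}\,\CC[\log(x)]$, together with the attendant check that its first coordinate $\bar\alpha_1 - \mu_1 + 1$ does not vanish, i.e. $\bar\alpha_1 \neq \mu_1 - 1$. This is precisely the condition both for $\del_1^{-1}\big[\del^\mu f\big]$ to be defined and for the uniqueness to apply; it holds for all $\mu \in \NN^n$ under the hypothesis $\bar\alpha_1 \notin \ZZ$ that governs the intended application, and for $\mu_1 = 0$ it reduces to the stated $\bar\alpha_1 \neq -1$. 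The commutation $\del_1\del^\mu = \del^\mu\del_1$ itself is free, being a relation in the Weyl algebra, so no analytic or combinatorial input beyond Lemma~\ref{lemma:antilog} is required.
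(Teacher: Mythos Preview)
Your proof is correct and follows exactly the route the paper indicates (it offers no argument beyond the sentence ``follows from the uniqueness in Lemma~\ref{lemma:antilog}''): both sides lie in the same graded piece $x^{\bar\alpha-\mu+e_1}\,\CC[\log(x)]$ and agree after applying $\del_1$, so injectivity of $\del_1$ on that piece forces equality. Your observation that the stated hypothesis $\bar\alpha_1\neq -1$ only suffices when $\mu_1=0$, and that the actual application relies on $\bar\alpha_1\notin\ZZ$ to guarantee $\bar\alpha_1-\mu_1+1\neq 0$ for all $\mu\in\NN^n$, is a genuine refinement the paper glosses over.
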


\begin{proof}[Proof of Proposition~\ref{prop:antiderivative}]
Since $\bar{\alpha}_1 \notin \ZZ$, the first coordinate of $u+\bar{\alpha}$ never equals $-1$.
We may define a formal power series 
\[\
\psi(x, \bar\alpha)
  \ = \ 
  \sum_{u\in C^* \cap \ker_{\ZZ}(A)} \del_1^{-1}
  \big[p_u(\log(x); \bar \alpha)\,x^{u+\bar{\alpha}}\big].
\] 
By Lemmas~\ref{lemma:fixEulers} and~\ref{lemma:commutingDerivatives}, 
the series $\psi$ is a solution of 
$H_A(\bar{\beta}+a_1)$ and $\del_1\psi=\varphi$. 
From this equality, we also conclude that 
$\psi$ is a canonical series solution of $H_A(\bar{\beta}+a_1)$.
In particular, $\psi$ is convergent. 
\end{proof}

\begin{proof}[Proof of Theorem~\ref{thm:perturbParameterLogSeries}]
Our first task is to show that for $\alpha_1$ in a neighborhood of
$\bar\alpha_1$, $\alpha$ is an exponent of $H_A(A\cdot \alpha)$ with respect
to $w$.
Let $G$ be a comprehensive Gr\"obner basis of $\HA$ with respect to
$(-w,w)$, 
and restrict to parameters of the form $\beta=A\cdot \alpha$ with
$\alpha_j=\bar{\alpha}_j$ for $j\neq 1$, so that the elements of $G$
are  
polynomials whose monomials, in the variables
$x$ and $\del$, have coefficients that are polynomial in $\alpha_1$. 
Recall that the degrees of the logarithmic polynomial coefficients in
an $A$-hypergeometric function are bounded by $n 2^{2d}\vol(A)$.
We may assume that there exists some $u\in C^*\cap \ker_\ZZ(A)$ such that
the polynomial $p_u(\log(x);\bar{\alpha})$ has positive degree.  

We have that $\del_1^k \varphi$ is a solution of $H_A(\bar{\beta}-ka_1)$ for all $k \in \NN$. 
Moreover, by repeated application of 
Proposition~\ref{prop:antiderivative}, 
for any $k\in\NN$, there is a solution 
$\psi_k$ of $H_A(\bar{\beta}+ka_1)$
such that $\del_1^k \psi_k = \varphi$. 
All of these hypergeometric series have the same support as $\varphi$,
and even the degrees of the logarithmic polynomial coefficients are
preserved. 

Since univariate polynomials have finitely many roots, the polynomials in $\alpha_1$ that are the coefficients of the elements of $G$ do not vanish for $\alpha_1=\bar{\alpha}_1+k$ when $k\in \NN$ is sufficiently large. 
Fix such a $k$. 
Once the result is proven for $\bar{\alpha}+ka_1$, applying $\del_1^k$ yields the desired result. 
Thus, we may assume that all the polynomial coefficients in $G$ are nonzero at $\bar{\alpha}_1$.

The conditions for $x^\alpha \,p_0(\log(x); \alpha)$ 
(where the degree of $p_0$ as a polynomial in $\log(x)$ does not depend on $\alpha$) 
to be a solution of $\inww(H_A(A \alpha))$ for $\alpha$ in a neighborhood of $\bar{\alpha}$ 
is a linear system of equations whose augmented matrix has entries
that are polynomial in $\alpha_1$. 

The solvability of a system of linear equations can be expressed as a
rank condition on its augmented matrix.
Thus, we have a solution for $\alpha$ in some algebraic subvariety $Y\subset \CC$, defined by 
certain minors of the augmented matrix.
By hypothesis, the required conditions are satisfied at $\bar{\alpha_1}$, 
hence $Y$ is nonempty. 
Actually, there exists a solution for $\alpha_1 = \bar{\alpha}_1 \pm \ell$ if $\ell$ is a sufficiently large positive integer, implying that $Y$ is of infinite cardinality. We conclude that $Y = \CC$.
Hence, there exists $\varepsilon >0$ such that for all $\alpha$ with
$|\alpha_1-\bar{\alpha}_1|<\varepsilon$ (and $\alpha_j = \bar \alpha_j$ for $j\neq 1$)
the system $\inww(H_A(A\cdot\alpha))$ has a solution of the form $x^\alpha\, p_0(\log(x); \alpha)$, 
where the degree of $p_0$ as a polynomial in $\log(x)$ does not depend
on $\alpha$. In particular, $\alpha$ is an exponent of $H_A(A\cdot\alpha)$
with respect to $w$ by~\cite[Lemma~2.5.10, Corollary~2.5.11]{SST}.

Now, for all $\alpha$ with $|\alpha_1-\bar{\alpha}_1|<\varepsilon$, 
we must construct a (canonical) series solution $\varphi$ of
$H_A(A\cdot\alpha)$ with respect to $(-w,w)$ of the
form~\eqref{eqn:barvarphi}.  
We follow the algorithm to compute canonical series solutions that appears in~\cite[Section~2.5]{SST}. 
Given $u\in C^* \cap \ker_{\ZZ}(A)$, this algorithm allows us to set
up a system of linear equations involving the coefficients of $p_u$
and those of $p_v$, for any  
\begin{align}\label{eqn:possiblev}
v\in C^* \cap \ker_{\ZZ}(A) 
\quad \text{such that} \quad 
w\cdot v \leq w \cdot u.
\end{align}
Solving this system we obtain the polynomial $p_u$.
This system has finitely many equations, since there are only finitely many elements 
that satisfy~\eqref{eqn:possiblev}. 
Moreover, the coefficients 
of the augmented matrix
of the linear system depend polynomially on $\alpha_1$.
If $\ell$ is a sufficiently large positive integer, the polynomials whose nonvanishing is a condition for the solvability of the system under consideration have no roots when
$|\alpha_1-(\bar{\alpha}_1+\ell)|<\varepsilon$. 
The same argument as above implies that the polynomials that are
required to vanish for the system to be solvable are identically zero.  
Upon obtaining a solution
$x^{u+\alpha}p_u(\log(x);\alpha)$ that is valid for
$|\alpha_1-(\bar{\alpha}_1+\ell)|<\varepsilon$, 
the polynomial $\del_1^{\ell}x^{u+\alpha}p_u(\log(x);\alpha)$ 
provides a solution that is valid for
$|\alpha_1-\bar{\alpha}_1|<\varepsilon$. 
To see this, two ingredients are necessary. First,
that $\del_1^{\ell}$ transforms solutions of
$\<E-A \alpha\>$ into solutions of
$\<E-A \alpha-\ell a_1\>$, and second, that
the toric operators have constant coefficients and therefore commute
with $\del_1^k$.

Finally, since there is a unique (up to a constant multiple) canonical
solution of $H_A(A\cdot\alpha)$ corresponding 
to the starting term $x^\alpha_0(\log(x);\alpha)$, the fact that the
coefficients of the polynomials $p_u(\log(x);\alpha)$ are determined
by solving linear systems whose augmented matrices depend polynomially
on $\alpha_1$ implies that the coefficients of $p_u(\log(x);\alpha)$ are rational functions
of $\alpha_1$.
\end{proof}

In the proof of Theorem~\ref{thm:perturbParameterLogSeries}, we
see that we can extend $\varphi(x;\bar\alpha)$ not just to $\alpha_1$ in a neighborhood of
$\bar\alpha_1$, but to $\alpha_1 \in \CC\minus \ZZ$, as all the
quantities involved are rational functions of $\alpha_1$, and $\alpha_1$ only
needs to avoid the poles of those rational functions.
Moreover, although only one parameter was perturbed in Theorem~\ref{thm:perturbParameterLogSeries}, 
it is possible to perturb any noninteger coordinate of the exponent
$\bar{\alpha}$, and obtain that the desired coefficients are
rational functions of the perturbed coordinates. 

\begin{corollary}
\label{coro:logSeriesInParameters}
Let $\bar{\alpha}$ be an exponent of $H_A(\bar{\beta})$ 
whose corresponding solution $\varphi(x;\bar\alpha)$, as in~\eqref{eqn:barvarphi}, is logarithmic.
Let $\sigma = \{i \mid \bar\alpha_i \notin \ZZ\}$. Let $\alpha$ denote
a vector whose coordinates indexed by $\sigma$ are $\alpha_i\in
\CC\minus \ZZ$, and whose coordinates not indexed by $\sigma$ coincide
with those of $\bar\alpha$.
Then there exist
$p_u(y_1,\dots,y_n;\alpha) \in \CC(\alpha_i \mid i \in \sigma) [y_1,\dots,y_n]$
for $u \in C^* \cap \ker_\ZZ(A)$, such that for $\alpha \in (\CC\minus
\ZZ)^\sigma$, the series
$ \varphi(x;\alpha) = x^\alpha \sum_{u \in C^* \cap \ker_\ZZ(A)}
p_u(\log(x);\alpha) x^u$ is a canonical series solution of
$H_A(A\cdot\alpha)$ with respect to $w$ (and therefore converges for
$x\in V$, where $V$ is the open set from
Theorem~\ref{thm:openSetInx}). Moreover when
$\alpha$ is set to $\bar\alpha$, $\varphi(x;\alpha)$ specializes to ~\eqref{eqn:barvarphi}.
\qed
\end{corollary}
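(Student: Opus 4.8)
The plan is to re-run the proof of Theorem~\ref{thm:perturbParameterLogSeries} with \emph{all} of the noninteger coordinates of $\bar\alpha$ perturbed at once, treating $\{\alpha_i\}_{i\in\sigma}$ as a collection of simultaneous indeterminates in place of the single variable $\alpha_1$. Equivalently, one could induct on $|\sigma|$: having perturbed the coordinates indexed by $\sigma' = \sigma\minus\{i_0\}$ and obtained coefficients in $\CC(\alpha_i\mid i\in\sigma')$, one applies Theorem~\ref{thm:perturbParameterLogSeries} once more to perturb $\alpha_{i_0}$, now working over the base field $\CC(\alpha_i\mid i\in\sigma')$ rather than $\CC$. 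In either formulation the combinatorial engine is unchanged, so I describe the simultaneous version.

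First I would fix a comprehensive Gr\"obner basis $G$ of $H_A(\beta)$ with respect to $(-w,w)$ and restrict to the slice $\beta = A\cdot\alpha$ with $\alpha_j = \bar\alpha_j$ for $j\notin\sigma$; then the coefficients of the monomials occurring in $G$ become polynomials in the several variables $(\alpha_i)_{i\in\sigma}$. Following the canonical-series algorithm of~\cite[Section~2.5]{SST} exactly as in the proof of Theorem~\ref{thm:perturbParameterLogSeries}, the determination of each $p_u$ (with $u\in C^*\cap\ker_\ZZ(A)$, coupled to those $p_v$ with $w\cdot v\le w\cdot u$) amounts to solving a finite linear system whose augmented matrix has entries polynomial in $(\alpha_i)_{i\in\sigma}$; the bound $n\,2^{2d}\vol(A)$ on the $\log(x)$-degree of the $p_u$ holds uniformly, so these systems have a fixed shape. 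Solving by Cramer's rule then yields coefficients that are rational functions of $(\alpha_i)_{i\in\sigma}$, which is the assertion $p_u\in\CC(\alpha_i\mid i\in\sigma)[y_1,\dots,y_n]$.

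The step that needs the most care, and the main obstacle, is showing that these systems remain solvable with the rank conditions holding \emph{identically} in $(\alpha_i)_{i\in\sigma}$, not merely at $\bar\alpha$. In the one-variable proof this came from exhibiting genuine solutions at $\alpha_1 = \bar\alpha_1\pm\ell$ for large $\ell$, forcing the obstruction locus in $\CC$ to be infinite and hence all of $\CC$. For several variables I would replace this by a Zariski-density argument. Proposition~\ref{prop:antiderivative} applies verbatim to any coordinate $i\in\sigma$ (relabel, using $\bar\alpha_i\notin\ZZ$), so for each $i\in\sigma$ the operators $\del_i$ and $\del_i^{-1}$ produce canonical series solutions whose exponent is shifted by $\mp e_i$; since $\bar\alpha_i+\ell\neq -1$ for all $\ell\in\ZZ$, these shifts may be iterated freely and independently across $\sigma$. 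One thereby obtains a genuine canonical series solution of $H_A(A\cdot(\bar\alpha+v))$ for every $v\in\ZZ^\sigma$, each with the same support and the same $\log(x)$-degrees as $\varphi(x;\bar\alpha)$, so the linear systems are literal specializations of one polynomial matrix and are solvable at every point of $\bar\alpha+\ZZ^\sigma$. Because $\bar\alpha+\ZZ^\sigma$ is Zariski dense in the slice $\CC^\sigma$, each minor whose vanishing would obstruct solvability is a polynomial in $(\alpha_i)_{i\in\sigma}$ vanishing on this dense set, hence vanishes identically; this is the multivariable analogue of the ``infinite $\Rightarrow$ all of $\CC$'' reasoning.

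With solvability established identically, I would finish as in Theorem~\ref{thm:perturbParameterLogSeries}: for all $\alpha\in(\CC\minus\ZZ)^\sigma$ the vector $\alpha$ is an exponent of $H_A(A\cdot\alpha)$ with respect to $w$ by~\cite[Lemma~2.5.10, Corollary~2.5.11]{SST}; the unique (up to scalar) canonical series with starting term $x^\alpha p_0(\log(x);\alpha)$ is the one built by the algorithm, so its coefficients are the rational functions just produced. Since the combinatorial type (support and $\log(x)$-degrees) is constant along $(\CC\minus\ZZ)^\sigma$, the genuine coefficients at each such $\alpha$ coincide with these rational functions, so no poles occur there and convergence on $V$ follows from Theorem~\ref{thm:openSetInx}. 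Setting $\alpha=\bar\alpha$ recovers~\eqref{eqn:barvarphi}, completing the argument.
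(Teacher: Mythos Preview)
Your proposal is correct and takes essentially the same approach as the paper. The paper gives no proof beyond the remark preceding the corollary that the single-coordinate argument of Theorem~\ref{thm:perturbParameterLogSeries} may be applied to any noninteger coordinate and that the resulting coefficients are rational in the perturbed coordinates; your write-up simply makes this explicit, with the Zariski-density step (solutions at $\bar\alpha+\ZZ^\sigma$ via iterated $\del_i$ and $\del_i^{-1}$) as the natural multivariable replacement for the paper's ``infinite subset of $\CC$ forces $Y=\CC$'' argument.
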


\section{Logarithmic $A$-hypergeometric series are holomorphic in the parameters}
\label{sec:LogSeriesHolomorphic}

In this section, we again assume $A$ is homogeneous. 
We show that logarithmic $A$-hy\-per\-geo\-me\-tric series are
hol\-o\-mor\-phic functions of the parameters,
then we prove Theorem~\ref{thm:mainHomogeneous}.

Let $\bar{\alpha}$ be an exponent of $H_A(\bar{\beta})$ 
whose corresponding solution $\varphi(x,\bar\alpha)$, as
in~\eqref{eqn:barvarphi}, is logarithmic.

\begin{theorem}
\label{thm:logSeriesHolomorphic}
Resume the hypotheses and notation from
Corollary~\ref{coro:logSeriesInParameters}. There exists an open set
$U \subseteq V \subset \CC^n$ (where $V$ is from
Theorem~\ref{thm:openSetInx}) such that 
$\varphi(x;\alpha)$ is holomorphic for $(x,\alpha) \in U \times
(\CC\minus \ZZ)^\sigma$. The open set $U$ does not depend on the
logarithmic $A$-hypergeometric series~\eqref{eqn:barvarphi}, on the
set $\sigma$, or on the parameters $\alpha$.
\end{theorem}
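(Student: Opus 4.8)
The plan is to reduce the theorem to a single uniform-convergence statement and then to dominate the logarithmic series term-by-term by a convergent logarithm-free Horn series, whose holomorphic dependence on the parameters is already guaranteed by Theorem~\ref{thm:HornSeriesConverge}. Recall from Corollary~\ref{coro:logSeriesInParameters} that $\varphi(x;\alpha)=x^\alpha\sum_{u\in C^*\cap\ker_\ZZ(A)}p_u(\log x;\alpha)\,x^u$, where each $p_u$ is a polynomial in $\log x$ whose coefficients are holomorphic in $\alpha$ on $(\CC\minus\ZZ)^\sigma$, and whose degree in $\log x$ is bounded by $N\defeq n2^{2d}\vol(A)$ independently of $u$ and of $\alpha$. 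Each partial sum $\sum_{w\cdot u\le T}x^{\alpha+u}p_u(\log x;\alpha)$ is then jointly holomorphic in $(x,\alpha)$ wherever $\log x$ is single-valued and $\alpha\in(\CC\minus\ZZ)^\sigma$. By the theorem that a locally uniform limit of holomorphic functions is holomorphic --- applied exactly as in the Morera argument in the proof of Theorem~\ref{thm:logFreeSeriesHolomorphic} --- it suffices to produce a fixed open set $U$ on which these partial sums converge uniformly for $\alpha$ in an arbitrary compact subset $K\subset(\CC\minus\ZZ)^\sigma$.

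The key point is that the logarithms in $\varphi$ arise solely from the integer coordinates $\bar\alpha_i$ with $i\notin\sigma$: for $i\in\sigma$ the factor $B_i\cdot\mu+\alpha_i$ appearing in~\eqref{eqn:HornEquations} never meets an integer, so no resonance occurs there. I would therefore introduce a perturbation $t$ supported on the coordinates $i\notin\sigma$ and realize the coefficients $p_u$ by a contour-integral (residue) representation against the logarithm-free Horn data, of the shape $x^{\alpha+u}p_u(\log x;\alpha)=\frac{1}{(2\pi i)^{|\sigma^c|}}\oint x^{\alpha+t+u}R_{\mu(u)}(\alpha+t)\,g(t)\,dt$, where $u=B\mu(u)$ writes $u$ in the fixed basis $B$ of $\ker_\ZZ(A)$, the integral is taken over a small torus $\{|t_i|=r\}_{i\notin\sigma}$ with $r\in(0,1)$, and $g$ collects the elementary rational factors that select the canonical logarithmic solution. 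Writing $x^{t}=e^{t\cdot\log x}$ and expanding shows that the integral is a polynomial in $\log x$ of degree at most the order of the pole, hence at most $N$; estimating it by the standard bound on contour integrals gives $|p_u(\log x;\alpha)|\le C\,(1+\max_i|\log x_i|)^N\sup_{|t_i|=r}|R_{\mu(u)}(\alpha+t)|$ with $C=C(N,r)$ independent of $u$ and $\alpha$.

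With $z_k\defeq x^{b^{(k)}}$ the Horn variables attached to the columns $b^{(k)}$ of $B$, summing the last estimate over $u$ majorizes $\varphi(x;\alpha)$ by $C\,|x^\alpha|\,(1+\max_i|\log x_i|)^N\sum_{\mu}\big(\sup_{|t_i|=r}|R_\mu(\alpha+t)|\big)\,|z|^\mu$. Since on the torus one has $\bar\alpha_i+t_i\notin\ZZ$ for $i\notin\sigma$, every parameter $\alpha+t$ lies in the set $Y\supset(\CC\minus\ZZ)^n$ of Theorem~\ref{thm:HornSeriesConverge}, and as $\alpha$ ranges over $K$ these parameters sweep a compact subset $K'\subset Y$. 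Theorem~\ref{thm:HornSeriesConverge} then yields uniform convergence of $\sum_\mu|R_\mu|\,|z|^\mu$ on $K'\times W$ for the polydisc $W$, which depends only on $B$. I would finally set $U\defeq\{x\in V:(x^{b^{(1)}},\dots,x^{b^{(m)}})\in W,\ x_1\cdots x_n\neq0\}$; this is open, contained in $V$, and, because $B$ and $W$ depend only on $A$ and $w$, it is independent of the series~\eqref{eqn:barvarphi}, of $\sigma$, and of $\alpha$ (the finiteness of the simplices of $\Delta_w$ and the uniform bound $N$ guarantee one $U$ serves all logarithmic canonical series at once). The factors $|x^\alpha|$ and $(1+\max_i|\log x_i|)^N$ are bounded on compact subsets of $U\times K$, so the majorant converges uniformly there, which is exactly the uniform convergence required above.

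The \emph{main obstacle} is the second paragraph: establishing the residue representation that ties the coefficients of the logarithmic canonical series to the logarithm-free Horn coefficients $R_\mu$, and verifying that the pole order --- equivalently the degree in $\log x$ --- is bounded by $N$ uniformly over the support. This is where the analysis of resonances among the coordinates $i\notin\sigma$ and the uniqueness of a canonical series with prescribed initial term (Definition~\ref{def:canonicalSeries}) must be combined to identify the contour integral with the given $\varphi(x;\alpha)$; once this representation is in place, the estimates and the appeal to Theorem~\ref{thm:HornSeriesConverge} are routine.
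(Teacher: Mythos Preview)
Your proposal hinges on the contour-integral representation in the second paragraph, which you correctly flag as the main obstacle --- but it is more than an obstacle, it is a genuine gap. You are asserting that the coefficients $p_u(\log x;\alpha)$ of an \emph{arbitrary} logarithmic canonical series can be recovered as residues of the Horn coefficients $R_\mu(\alpha+t)$ against an unspecified weight $g(t)$. This is a version of the method of parametric derivatives (perturb the integer coordinates of the exponent, hence the parameter $\beta=A\alpha$, and extract logarithms from the Laurent expansion), and the Introduction of the paper warns explicitly that this method fails to produce all $A$-hypergeometric functions whenever $\rank D/H_A(\beta)$ is not constant in $\beta$. At a rank-jumping parameter there are canonical logarithmic series that do not arise as limits or residues of nearby log-free series, and for those no choice of $g(t)$ will make your formula hold. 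You give no construction of $g$, no argument that the integral reproduces the \emph{canonical} series in the sense of Definition~\ref{def:canonicalSeries}, no verification that the same $g$ serves uniformly for all $u$, and no mechanism to handle the extra solutions. The majorization $|p_u|\le C(1+\max_i|\log x_i|)^N\sup_{|t_i|=r}|R_{\mu}(\alpha+t)|$ therefore rests on nothing.

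The paper's proof avoids any such representation. After dehomogenizing the torus action via a lattice basis chosen through Lemma~\ref{lemma:goodCone} and invoking Saito's result that each $p_u$ lies in the symmetric algebra of $\ker_\ZZ(A)$, it writes the series as $F(z;\alpha)=\sum_{\gamma}\log(z)^\gamma F_\gamma(z;\alpha)$ and runs a \emph{descending induction on the logarithmic exponent} $\gamma$: for componentwise maximal $\gamma$ the coefficient $F_\gamma$ is a dehomogenized log-free $A$-hypergeometric series, holomorphic by Theorem~\ref{thm:logFreeSeriesHolomorphic} (this is Lemma~\ref{lemma:maximalLogTerm}); for lower $\gamma$ the Horn operators~\eqref{eqn:HornOp} give $H_k\bullet F_\gamma=G_k$ with $G_k$ built from higher $F_\delta$ and hence already holomorphic. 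The resulting first-order recurrence on the Taylor coefficients of $F_\gamma$ is solved explicitly, and a second induction on the number of active $z$-variables, combined with repeated Hadamard products against pieces of $\Phi_B$ (Lemma~\ref{lemma:hadamardProduct} and Theorem~\ref{thm:HornSeriesConverge}), propagates holomorphicity. Both inductions terminate in a number of steps bounded in terms of $A$ and $w$ alone, which is what produces the uniform open set $U$. Crucially, this argument uses the differential equations satisfied by $\varphi$ directly and never attempts to express a logarithmic solution as a deformation of a log-free one.
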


We prove Theorem~\ref{thm:logSeriesHolomorphic} through an inductive
procedure, based on the following result.  
Recall that we use the convention that $0\in\NN$. 

\begin{lemma}
\label{lemma:maximalLogTerm}
Rewrite a solution $\varphi$ of $H_A(\bar{\beta})$ as in~\eqref{eqn:barvarphi} as 
\[
\varphi(x,\bar\alpha)
  = \sum_{\gamma \in \NN^n} \varphi_\gamma(x,\bar\alpha) 
  \log(x)^{\gamma},
\]
where $\varphi_\gamma$ are logarithm-free series. We refer to
$\gamma$ as the \emph{logarithmic exponent} of $\varphi_\gamma
\log(x)^\gamma$.
If $\delta$ is a componentwise maximal element of 
$\{\gamma \in \NN^n \mid \varphi_{\gamma} \neq 0 \}$, 
then $\varphi_{\delta}$ is a (logarithm-free) 
solution of $H_A(\bar{\beta})$.
\end{lemma}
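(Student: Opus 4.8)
The plan is to verify directly that $\varphi_\delta$ is annihilated by a generating set of $H_A(\bar\beta)$, namely the toric binomials $\del^u-\del^v$ (with $Au=Av$) generating $I_A$, together with the Euler operators $E_i-\bar\beta_i$; since a holomorphic function killed by a generating set is killed by the whole left ideal, this suffices, and $\varphi_\delta$ is logarithm-free by construction. The device that makes this work is to grade $N_w$ by \emph{logarithmic degree}, using the componentwise partial order on the exponents $\gamma\in\NN^n$ of $\log(x)^\gamma$, and to track how the generators of $H_A(\bar\beta)$ interact with this grading.

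First I would record the elementary action of the relevant operators on a mixed monomial. Writing $\epsilon_k$ for the $k$-th standard basis vector, one has
\[
\del_k\big(x^\beta \log(x)^\gamma\big)
 = \beta_k\, x^{\beta-\epsilon_k}\log(x)^\gamma
 + \gamma_k\, x^{\beta-\epsilon_k}\log(x)^{\gamma-\epsilon_k},
\]
and similarly $x_j\del_j\big(x^\beta\log(x)^\gamma\big)=\beta_j\, x^\beta\log(x)^\gamma+\gamma_j\, x^\beta\log(x)^{\gamma-\epsilon_j}$. In both cases the logarithmic degree is \emph{non-increasing}: differentiating a factor $\log(x_i)$ strictly lowers it. Iterating the first identity for the toric binomials $P=\del^u-\del^v$, and combining the second identity (with the scalar $-\bar\beta_i$) for the Euler operators $P=E_i-\bar\beta_i$, one finds in each case that the component of $P\big(x^\beta\log(x)^\gamma\big)$ of logarithmic degree exactly $\gamma$ equals $\log(x)^\gamma\, P(x^\beta)$ — that is, the top logarithmic component is obtained by treating $\log(x)$ as a constant — while every other component has logarithmic degree strictly below $\gamma$. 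By linearity this passes from monomials to the logarithm-free series $\varphi_\gamma$: for each such generator $P$, the logarithmic-degree-$\gamma$ component of $P\big(\varphi_\gamma\log(x)^\gamma\big)$ is $\log(x)^\gamma\, P(\varphi_\gamma)$, and all remaining components have strictly smaller logarithmic degree.

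With this in hand the maximality hypothesis finishes the argument. Apply a generator $P$ to $\varphi=\sum_{\gamma}\varphi_\gamma\log(x)^\gamma$ and extract the coefficient of $\log(x)^\delta$ in $P\varphi$. A summand $P\big(\varphi_\gamma\log(x)^\gamma\big)$ can contribute to logarithmic degree $\delta$ only if $\delta\le\gamma$ componentwise; but $\delta$ is a componentwise maximal element of the support $\{\gamma\mid \varphi_\gamma\neq 0\}$, so the only such $\gamma$ with $\varphi_\gamma\neq0$ is $\gamma=\delta$ itself. Hence the $\log(x)^\delta$-component of $P\varphi$ equals $\log(x)^\delta\, P(\varphi_\delta)$. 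Since $P\varphi=0$, we conclude $P(\varphi_\delta)=0$. As $P$ ranges over the chosen generators of $H_A(\bar\beta)$, this shows $\varphi_\delta$ is a logarithm-free solution of $H_A(\bar\beta)$; its convergence is inherited from that of $\varphi$, since $\varphi_\delta$ is a subseries of $\varphi$.

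The only genuinely delicate point — and where I would be most careful — is the operator computation of the second paragraph: one must check that \emph{both} the toric generators and the Euler operators fail to raise the logarithmic degree, and correctly identify the leading component as $\log(x)^\gamma\, P(\varphi_\gamma)$. Once this ``leading symbol in the logarithmic grading'' is established, the componentwise-maximality extraction is immediate and purely formal, requiring no convergence input beyond that already guaranteed for~\eqref{eqn:barvarphi}.
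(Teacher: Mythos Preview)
Your proposal is correct and follows the same approach as the paper's proof: the paper simply asserts that for any differential operator $P$ one has $P\bullet[\varphi_\gamma\log(x)^\gamma]=\log(x)^\gamma[P\bullet\varphi_\gamma]+(\text{terms with lower logarithmic exponent})$, and then extracts the $\log(x)^\delta$-component. You have supplied the explicit verification of this identity for the generators of $H_A(\bar\beta)$, which is exactly the content the paper's two-sentence proof leaves implicit.
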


\begin{proof}
If $P$ is a differential operator, then 
\[
P \bullet \big[ 
  \varphi_\gamma \log(x)^\gamma \big] 
  = \log(x)^\gamma \big[ P \bullet 
  \varphi_\gamma \big ] + 
  \text{ terms with lower logarithmic exponent}.
\]
Thus, if $P$ annihilates $\varphi$, then $P$ must annihilate $\varphi_\delta$ for any componentwise
maximal $\delta$.
\end{proof}

By Lemma~\ref{lemma:maximalLogTerm}, 
a logarithmic $A$-hypergeometric series has the property that the
coefficients of the maximal logarithmic monomials are logarithm-free
$A$-hypergeometric series of the same parameter.  
Such a series is holomorphic in 
the parameters by Theorem~\ref{thm:logFreeSeriesHolomorphic}.
Thus, in order to prove Theorem~\ref{thm:logSeriesHolomorphic}, 
it is enough to express the coefficients of logarithmic monomials 
of smaller exponents, in terms of coefficients of logarithmic monomials
with larger exponents, in such a way that holomorphy is preserved. 
To achieve this goal, we perform a change of variables, which is aided by the following result.

\begin{lemma}
\label{lemma:goodCone}
If $K$ is a pointed, full dimensional rational polyhedral cone in
$\RR^m$ and  $L$ is a full rank lattice in $\RR^m$,  
then there exists a $\ZZ$-basis
$\{\lambda_1,\dots,\lambda_m\}$ of $L$ such that $K\cap L$ is contained in the set 
$\{ \nu_1\lambda_1+\cdots +\nu_m \lambda_m
  \mid \nu_1,\dots,\nu_m \in\NN \}$.
\end{lemma}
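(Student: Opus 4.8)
The plan is to reduce the statement to a purely geometric claim, namely that $K$ can be enclosed in a unimodular simplicial cone (one generated by a $\ZZ$-basis of $L$), and then to construct such a cone by a duality argument together with an explicit shearing of a lattice basis.

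First I would record the elementary observation that reduces the whole lemma to a containment of real cones. Suppose I have produced a $\ZZ$-basis $\lambda_1,\dots,\lambda_m$ of $L$ with the property that $K$ is contained in the real cone $\operatorname{cone}(\lambda_1,\dots,\lambda_m) = \{\sum_i c_i \lambda_i \mid c_i \geq 0\}$. Then for any $v \in K \cap L$, writing $v = \sum_i \nu_i \lambda_i$ forces integer coordinates $\nu_i$ (because $\{\lambda_i\}$ is a lattice basis and $v \in L$), while membership of $v$ in $\operatorname{cone}(\lambda_1,\dots,\lambda_m)$ forces these same coordinates to be nonnegative, since the $\lambda_i$ are $\RR$-linearly independent and so coordinates are unique. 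Hence $\nu_i \in \NN$, and $v$ lies in the claimed monoid. So it suffices to find a lattice basis of $L$ whose generated cone contains $K$.

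Next I would dualize. Let $L^* = \{y \in \RR^m \mid \langle y, v\rangle \in \ZZ \text{ for all } v \in L\}$ be the dual lattice and $K^* = \{y \mid \langle y, x\rangle \geq 0 \text{ for all } x \in K\}$ the dual cone. If $\{\lambda_i\}$ is a basis of $L$ with dual basis $\{\lambda_i^*\}$ of $L^*$, characterized by $\langle \lambda_i^*, \lambda_j\rangle = \delta_{ij}$, then the identity $x = \sum_i \langle \lambda_i^*, x\rangle \lambda_i$ shows that $\operatorname{cone}(\lambda_1,\dots,\lambda_m) = \{x \mid \langle \lambda_i^*, x\rangle \geq 0 \text{ for all } i\}$. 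Consequently $K \subseteq \operatorname{cone}(\lambda_1,\dots,\lambda_m)$ holds if and only if $\lambda_i^* \in K^*$ for every $i$. Thus the problem is reduced to finding a $\ZZ$-basis of $L^*$ all of whose elements lie in $K^*$; passing to its dual basis in $L$ then completes the argument, using that $K$ is a closed convex cone, so that $K^{**} = K$ and no information is lost upon dualizing. Since $K$ is pointed and full dimensional, $K^*$ is again full dimensional and pointed, so in particular $\operatorname{int}(K^*)$ is a nonempty open cone.

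Finally I would construct the required basis of $L^*$ by a shearing argument, which I expect to be the crux of the proof. Because $\operatorname{int}(K^*)$ is a nonempty open cone, a sufficiently large dilate of a ball contained in it has radius exceeding the covering radius of $L^*$, so $\operatorname{int}(K^*)$ contains a lattice point, and after dividing by the greatest common divisor of its coordinates, a primitive one $w \in L^* \cap \operatorname{int}(K^*)$. Extend $w$ to a $\ZZ$-basis $w, c_2,\dots,c_m$ of $L^*$, which is possible since $w$ is primitive. The remaining vectors $c_i$ need not lie in $K^*$, and this is the one genuine obstacle; I would resolve it by replacing each $c_i$ with $c_i + N w$ for a large integer $N$. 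This is a unimodular change of basis, so $w, c_2 + Nw, \dots, c_m + Nw$ is still a $\ZZ$-basis of $L^*$, and since $c_i + Nw = N\,(w + c_i/N)$ has direction tending to $w \in \operatorname{int}(K^*)$ as $N \to \infty$, for all large $N$ every $c_i + Nw$ lies in $\operatorname{int}(K^*) \subseteq K^*$. This yields a $\ZZ$-basis of $L^*$ contained in $K^*$, and dualizing it produces the desired basis $\{\lambda_1,\dots,\lambda_m\}$ of $L$ with $K \subseteq \operatorname{cone}(\lambda_1,\dots,\lambda_m)$, which by the first paragraph finishes the proof.
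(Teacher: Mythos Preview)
Your proof is correct. Both your argument and the paper's share the same overall architecture: dualize, so that the task becomes producing a $\ZZ$-basis of the dual lattice $L^*$ lying inside the dual cone $K^*$, and then take the dual basis in $L$. Where you diverge is in how that basis of $L^*$ is actually found. The paper invokes Gordan's lemma: since $K^*\cap L^*$ generates $L^*$ as a group, it asserts that the Hilbert basis of $K^*\cap L^*$ contains a $\ZZ$-basis $\kappa_1,\dots,\kappa_m$ of $L^*$, and then dualizes. Your construction is instead entirely elementary and explicit: pick a primitive interior lattice point $w\in\operatorname{int}(K^*)\cap L^*$, extend it to a lattice basis, and shear the remaining basis vectors by a large multiple of $w$ to drag them into $K^*$. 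This buys you two things. First, you avoid any appeal to Hilbert bases, and in particular you sidestep the somewhat delicate claim that a finite generating set of $L^*$ drawn from $K^*$ must \emph{contain} a $\ZZ$-basis, which the paper leaves unjustified. Second, your argument never uses that $K$ is rational polyhedral; it goes through verbatim for any pointed, full-dimensional closed convex cone, so you have in fact proved a slightly stronger statement than the one asserted. One small remark: the appeal to $K^{**}=K$ is not actually needed, since the implication you use (each $\lambda_i^*\in K^*$ forces $K\subseteq\operatorname{cone}(\lambda_1,\dots,\lambda_m)$) holds directly without bidualizing.
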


\begin{proof}
Since $K$ is full dimensional and pointed, so is its polar cone $K^*$. 
Let $L^*$ be the dual lattice of $L$, that is, 
$L^* = \{ u\in\RR^m \mid 
  u \cdot \lambda \in \ZZ \text{ for all } \lambda \in L\}$. 
The group generated by the monoid $K^*\cap L^*$ is $L^*$, so the Hilbert basis of $K^*\cap L^*$ must contain a set of generators of $L^*$, say $\kappa_1,\dots,\kappa_m$. 
This implies that $K$ is contained in the cone given by $\nabla = \{ z 
  \mid \kappa_j \cdot z \geq 0 \text{ for } j=1,\dots,m\}$. 
Let $\lambda_1,\dots,\lambda_m$ be the dual basis of 
$\kappa_1,\dots,\kappa_m$; this is the desired basis of $L$ because by construction, $\nabla$ is the set of nonnegative linear combinations of 
$\lambda_1,\dots,\lambda_m$. 
\end{proof}

We state one more result required for the proof of Theorem~\ref{thm:logSeriesHolomorphic}.

\begin{lemma}
\label{lemma:hadamardProduct}
Assume that 
\begin{equation}
\label{eqn:Hadamard}
\varphi_1(z,\alpha) = \sum_{\mu \in \NN^m} c(\mu, \alpha) \, z^\mu 
\quad\text{and}\quad 
\varphi_2(z,\alpha)=\sum_{\mu \in \NN^m} c'(\mu, \alpha) \, z^\mu
\end{equation}
are holomorphic for $(z,\alpha)$ in a product of domains $U_1\times U_2 \subset
\CC^m \times \CC^k$, where $U_1$ contains the origin in $\CC^m$. In
particular, for each fixed $\alpha \in U_2$, the series~\eqref{eqn:Hadamard}
are absolutely convergent on $U_1$.
Then, there is a domain $U_3 \subset \CC^m$ such that the Hadamard product 
\[
(\varphi_1*\varphi_2)(z,\alpha) \defeq 
\sum_{\mu \in \NN^m} 
  c(\mu,\alpha)\,c'(\mu,\alpha)\,z^\mu 
\]
is holomorphic on $U_3\times U_2$.
\end{lemma}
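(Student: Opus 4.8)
The plan is to reduce the statement to the classical fact that the Hadamard product of two power series converges on a polydisc governed by the product of their radii, while tracking holomorphy in the auxiliary parameters $\alpha$ by means of Cauchy estimates and the Weierstrass convergence theorem. Since $U_1$ is open and contains the origin of $\CC^m$, I would first fix, once and for all, a closed polydisc $P = \{z \in \CC^m \mid |z_i| \le \rho_i,\ i=1,\dots,m\} \subset U_1$ with polyradius $\rho = (\rho_1,\dots,\rho_m)$, each $\rho_i > 0$. The crucial point is that $P$, and hence $\rho$, is chosen before any reference to $\alpha$; this is what will force the output domain $U_3$ to be independent of $\alpha$.

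Next I would represent the coefficients through the Cauchy integral
\[
c(\mu,\alpha) = \frac{1}{(2\pi i)^m} \oint_{|z_1|=\rho_1}\cdots\oint_{|z_m|=\rho_m} \frac{\varphi_1(z,\alpha)}{z_1^{\mu_1+1}\cdots z_m^{\mu_m+1}}\, dz_1\cdots dz_m,
\]
and similarly for $c'(\mu,\alpha)$ with $\varphi_2$; these integrals recover the Taylor coefficients appearing in~\eqref{eqn:Hadamard}, since for each fixed $\alpha$ the series converge absolutely on $P \subset U_1$. Because $\varphi_1$ is holomorphic on $U_1 \times U_2$ and the contour is a fixed compact torus in the $z$-variables, each $c(\mu,\alpha)$ is holomorphic in $\alpha$ on $U_2$ (differentiate under the integral sign, or apply Morera together with Fubini); the same holds for $c'(\mu,\alpha)$, and hence the product $c(\mu,\alpha)\,c'(\mu,\alpha)$ is holomorphic in $\alpha$. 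Moreover, for any compact $K \subset U_2$ the continuous function $\varphi_1$ is bounded on $P \times K$, say by $M_K$, so the Cauchy estimate gives $|c(\mu,\alpha)| \le M_K\,\rho^{-\mu}$ for all $\mu \in \NN^m$ and all $\alpha \in K$, where $\rho^{-\mu} = \rho_1^{-\mu_1}\cdots\rho_m^{-\mu_m}$; likewise $|c'(\mu,\alpha)| \le M'_K\,\rho^{-\mu}$.

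Multiplying the two estimates gives $|c(\mu,\alpha)\,c'(\mu,\alpha)\,z^\mu| \le M_K M'_K \prod_{i=1}^m (|z_i|/\rho_i^2)^{\mu_i}$, which is the general term of a convergent geometric series as soon as $|z_i| < \rho_i^2$ for every $i$. I would therefore set $U_3 = \{z \in \CC^m \mid |z_i| < \rho_i^2,\ i=1,\dots,m\}$, a polydisc depending only on $\rho$. The bound shows that $\sum_{\mu} c(\mu,\alpha)\,c'(\mu,\alpha)\,z^\mu$ converges absolutely and uniformly on $K' \times K$ for every compact $K' \subset U_3$ and $K \subset U_2$. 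As each partial sum is a finite sum of functions holomorphic on $U_3 \times U_2$ (the monomial $z^\mu$ times the holomorphic coefficient $c(\mu,\alpha)\,c'(\mu,\alpha)$), the Weierstrass theorem shows that the locally uniform limit $(\varphi_1 * \varphi_2)(z,\alpha)$ is holomorphic on $U_3 \times U_2$.

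I expect the only genuine obstacle to be the requirement that $U_3$ be a single domain valid over all of $U_2$, rather than one shrinking as $\alpha$ approaches $\partial U_2$. This is exactly what is secured by fixing the polydisc $P \subset U_1$ before introducing any dependence on $\alpha$: the Hadamard radii $\rho_i^2$ are then frozen, and the compact set $K$ enters only through the multiplicative constants $M_K, M'_K$, which inflate the uniform bound but leave the region of convergence untouched.
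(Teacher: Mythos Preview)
Your proof is correct, and it takes a genuinely different route from the paper's. The paper rescales so that $U_1$ is the unit polydisc, then expands both $\varphi_1$ and $\varphi_2$ locally around an arbitrary $\alpha_0\in U_2$ as double power series in $(z,\alpha-\alpha_0)$; it bounds the Hadamard coefficients $C_{\mu,\delta}=\sum_{\eta+\eta'=\delta}c_{\mu,\eta}c'_{\mu,\eta'}$ directly via the splitting $|z|^\mu=|z|^{\mu/2}\cdot|z|^{\mu/2}$, so that the absolute series of the Hadamard product is dominated by the product of the absolute series of $\varphi_1(\sqrt{|z|},\alpha)$ and $\varphi_2(\sqrt{|z|},\alpha)$. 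You instead freeze a polyradius $\rho$ in $U_1$, extract the coefficients $c(\mu,\alpha),c'(\mu,\alpha)$ by Cauchy integrals, deduce their holomorphy in $\alpha$ and the uniform Cauchy estimates $|c(\mu,\alpha)|\le M_K\rho^{-\mu}$ on compacta $K\subset U_2$, and then appeal to Weierstrass for the limit. Both arguments land on the same ``squared radii'' phenomenon for $U_3$; the paper's version is a one-line elementary estimate on absolute double series, while yours is the more textbook route that makes the uniformity in $\alpha$ and the independence of $U_3$ from $\alpha$ completely explicit.
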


\begin{proof}
After possibly rescaling, we can assume that $U_1$ is the polydisc 
$\{z \, | \, |z_i| < 1, i = 1, \dots, n\}$. In this case, if $(z, \alpha) \in U_1\times U_2$, then so is 
$(\sqrt{|z|}, \alpha)$, where $\sqrt{|z|} = (\sqrt{|z_1|}, \dots, \sqrt{|z_n|})$.

Let $\alpha_0 \in U_2$. Then, there are absolutely convergent power series expansions
\[
\varphi_1(z,\alpha) = \sum_{\mu, \eta} c_{\mu, \eta}\,(\alpha-\alpha_0)^\eta\, z^\mu
\quad \text{and} \quad 
\varphi_2(z,\alpha) = \sum_{\mu, \eta'} c'_{\mu, \eta}\,(\alpha-\alpha_0)^{\eta'}\, z^\mu,
\]
where $c(\mu, \alpha) = \sum_\eta c_{\mu, \eta}(\alpha - \alpha_0)^\eta$, and ditto
for $c'(\mu, \alpha)$. We have that, formally,
\begin{align*}
(\varphi_1*\varphi_2)(z,\alpha) 
&= \sum_{\mu, \eta, \eta'} c_{\mu, \eta}\,c'_{\mu, \eta'}\,(\alpha-\alpha_0)^\eta (\alpha-\alpha_0)^{\eta'} z^\mu\\
& = \sum_{\mu, \delta} C_{\mu, \delta}\,(\alpha-\alpha_0)^{\delta}\,z^\mu,
\end{align*}
where $C_{\mu, \delta} = \sum_{\eta+\eta' = \delta} c_{\mu, \eta}\,c'_{\mu, \eta'}$.
Thus, the statement follows from the elementary estimates
\begin{align*}
\sum_{\mu, \delta} \left|C_{\mu, \delta}\right|\,|\alpha-\alpha_0|^{\delta}\,|z|^\mu 
& \leq \sum_{\mu, \eta, \eta'} |c_{\mu, \eta}|\,|c'_{\mu, \eta'}|\,|\alpha-\alpha_0|^\eta |\alpha-\alpha_0|^{\eta'} |z|^\mu\\
& \leq
\left(\sum_{\mu, \eta} |c_{\mu, \eta}|\,|\alpha-\alpha_0|^\eta\,|z|^{\frac\mu2}\right)
\left(\sum_{\mu, \eta'} |c'_{\mu, \eta'}|\, |\alpha-\alpha_0|^{\eta'} \,|z|^{\frac\mu2}\right).\qedhere
\end{align*}
\end{proof}

\begin{proof}[Proof of Theorem~\ref{thm:logSeriesHolomorphic}]
Consider a solution $\varphi(x,\bar \alpha)$ of $H_A(\bar{\beta})$ as in~\eqref{eqn:barvarphi}.
It is given that $C^*$ is pointed, and we may assume that $C^*$ is full dimensional. 
Note that $m=\rank(\ker_{\ZZ}(A)) = n-d$. Therefore, Lemma~\ref{lemma:goodCone} applied to 
the cone $C^* \cap \ker_{\RR}(A) \subset \ker_{\RR}(A)\cong\RR^m$ 
and the lattice $\ker_{\ZZ}(A)$
yields a basis $b_1,\dots,b_m$ of $\ker_{\ZZ}(A)$ such that 
\[
C^* \cap \ker_{\ZZ}(A) \subseteq 
  \{ \nu_1b_1+\cdots+\nu_mb_m \mid 
 \nu_1,\dots,\nu_m \in\NN\}.
\] 
Moreover, since $b_1,\dots,b_m$ is a lattice basis, any element
$u\in C^* \cap \ker_{\ZZ}(A)$ may be expressed as $u = B\nu$ for some $\nu \in \NN^m$, where $B$ is the matrix whose columns are given by $b_1,\dots,b_m$. 
Therefore, we may write 
\[
\varphi(x; \bar \alpha) 
  = x^{\bar{\alpha}}\sum_{\nu \in \NN^m} 
  p_{B \nu}(\log(x); \bar \alpha)\,x^{B\nu},
\]
where the $p_u$ are polynomials in $n$ variables whose coefficients
depend on $\bar\alpha$. The support of the series above may be strictly contained
in $\NN^m$, however, it is more convenient to use this larger summation set.
Furthermore, by~\cite[Proposition~5.2]{saito-logFree}, 
for each fix $\bar \alpha$, the polynomial $p_u$ 
(or, $p_{B\nu}$) 
belongs to the symmetric algebra of the lattice $\ker_\ZZ(A)$. 
Thus, there are $m$-variate polynomials
$q_\nu(y_1,\dots,y_m;\bar\alpha) \in \CC[y_1,\dots,y_m]$ (whose
coefficients depend on $\bar\alpha$)
such that 
\[
p_{B \nu} (\log(x); \bar \alpha) 
  = q_\nu(
  \log(x^{b_1}),\dots,\log(x^{b_m}); \bar \alpha).
\] 
Consequently, the series  
\[
F(z; \bar \alpha)
  \defeq \sum_{\nu \in \NN^m} 
  q_\nu(\log(z); \bar \alpha)\,z^{\nu}
\]
is such that
\[
\varphi(x; \bar \alpha)
  = x^{\bar{\alpha}}\,F(x^{b_1},\dots,x^{b_m}; \bar \alpha).
\] 
We refer to this passage from the $n$-variate series $\varphi$ to the $m$-variate series $F$ as \emph{dehomogenizing the torus action}. 
A $D$-module theoretic study of this procedure and its implications for hypergeometric systems can be found in~\cite{BMW13-holSing}. 
Note that, by construction of the convergence domain in $x$ of
$\varphi$, 
the series $F(z)$ converges for $z$ in a polydisc $W$ around the
origin in $\CC^m$.
Moreover, by Theorem~\ref{thm:openSetInx} the same neighborhood $W$ can be used for any parameter.

Using the notation from in Corollary~\ref{coro:logSeriesInParameters},
let $\alpha \in (\CC \minus \ZZ)^\sigma$. 
For convenience, we assume that $1 \in \sigma$, and
we fix the value of all parameters (the coordinates of $\alpha$) except $\alpha_1$.
Then for $\alpha_1 \in \CC \minus \ZZ$, $\varphi(x;\alpha)$ is a
well defined solution of $H_A(A\cdot \alpha)$; each coefficient
$p_u(\log(x);\alpha)$ is a polynomial in $\log(x)$ with coefficients
that are rational functions in $\alpha_1$. Therefore, each $q_\nu$ is
a polynomial in $\log(z)$ whose coefficients are rational functions of
$\alpha_1$. The degrees of the numerators and denominators of these
rational functions need not be uniformly bounded.

Since $x^{\alpha}$ is entire as a function of 
$\alpha_1$, the series $\varphi$ is holomorphic for 
$\alpha_1 \in \CC \minus \ZZ$ if and only if $F$ is holomorphic for
$\alpha_1 \in \CC \minus \ZZ$.

We can write
\begin{equation}
\label{eqn:logHornSeries}
F (z;\alpha)
  = \sum_{\gamma \in S} 
  \log(z)^\gamma F_\gamma(z;\alpha), 
\end{equation}
where $S\subset \NN^m$ is a finite set. 
Note that each series $F_\gamma(z;\alpha)$ is logarithm-free. 
The coordinatewise maximal elements of $S$ correspond to
coordinatewise maximal logarithmic monomials as in
Lemma~\ref{lemma:maximalLogTerm}.  
This implies that the series $F_\delta$ corresponding to
coordinatewise maximal elements $\delta$ of $S$ are dehomogenizations of
logarithm-free $A$-hypergeometric series. 
Note also, that these logarithm free $A$-hypergeometric series have
exponents that depend on $\alpha_1$, and are defined for $\alpha_1 \in
\CC \minus \ZZ$. Using the
expresion~\eqref{eqn:seriesFor-alpha} of a logarithm-free
$A$-hypergeometric series, and
Theorem~\ref{thm:logFreeSeriesHolomorphic}, 
we conclude that
these series are holomorphic for $\alpha_1 \in \CC \minus \ZZ$ and $z
\in U'$.

The above observation is the base case in an inductive argument to
show that the series $F_\gamma(z;\alpha)$ 
is holomorphic in $\alpha$. For the inductive step,
we wish to express $F_\gamma(z,\alpha)$ in terms series
$F_\delta(z,\alpha)$ for which $\delta$ is larger than $\gamma$
componentwise.

Let $B_1,\dots,B_n \in \ZZ^m$ denote the rows of the $n\times m$ matrix $B$
obtained in the process of dehomogenizing the torus action.
For $k=1,\dots, m$, recall the polynomials in the variables $\mu =
(\mu_1,\dots,\mu_m)$ from~\eqref{eqn:HornEquations}.
\[
P_k(\mu; \alpha) 
 = \prod_{b_{jk}>0} \prod_{\ell=0}^{b_{jk}-1} 
  (B_j \cdot \mu + \alpha_j - \ell)
\quad\text{and}\quad 
Q_k(\mu; \alpha) 
  = \prod_{b_{jk}<0} \prod_{\ell=0}^{|b_{jk}|-1}
  (B_j \cdot \mu + \alpha_j - \ell).
\]
We consider the \emph{Horn operators} associated to $B$, defined as
\begin{equation}
\label{eqn:HornOp}
H_k \defeq Q_k(z_1\del_{z_1},\dots,z_m\del_{z_m}; \alpha) - z_k
  P_k(z_1\del_{z_1},\dots,z_m\del_{z_m}; \alpha), \quad k=1, \dots, m.
\end{equation}
Given a vector $b\in\ZZ^n$, let $b_+, b_-\in\NN^n$ be the unique vectors such that $b=b_+-b_-$. 
Let $b_1,\dots,b_m$ denote the columns of $B$. For $k=1,\dots,m$, the operators 
$\del^{b_{k+}} - \del^{b_{k-}}$ annihilate 
$\varphi$. Using this fact, we can show that there exists $\omega \in \CC^m$,
depending linearly on $\alpha$, such
that $z^\omega F(z;\alpha)$ is a solution of the system defined by the
Horn operators $H_1,\dots,H_m$ (see~\cite{BMW13-holSing} for
details). In what follows, we assume that 
$\omega=0$ to simplify notation, and observe that all our arguments
are directly applicable to the general case.

Let $k$ be a fixed integer in between $0$ and $m$.
As $F(z;\alpha)$ is a solution to the $k$th Horn operator,
the coefficient of the logarithmic monomial $\log(z)^\gamma$ in the series obtained
by applying the $k$th operator~\eqref{eqn:HornOp}
to $F(z;\alpha)$ must  vanish.
This coefficient is the sum of the $k$th Horn operator applied to the
series  $F_\gamma$ and a series arising from applying differential 
operators to coefficients of $F_\delta$, for $\delta$ that are coordinatewise larger than $\gamma$. 
Thus,
\begin{equation}
\label{eqn:HornApplied}
H_k\bullet 
  F_\gamma(z;\alpha) = G_k(z;\alpha),
\end{equation}
where, by the inductive hypothesis, the series $G_k(z,\alpha)$ 
is holomorphic for $\alpha_1$ in $\CC\minus\ZZ$ and $z \in U_{k,\gamma}$
where $U_{k,\gamma}$ is a polydisc around $0$ which depends only on
$k$ and $\gamma$, and not on $\alpha$.

Let us expand the series $F_\gamma(z;\alpha)$  as
\[
F_\gamma(z;\alpha) 
  = \sum_{\mu \in \NN^m} f_\mu(\alpha) z^\mu.
\]
Our goal is to show that $F_\gamma$ is holomorphic in $\alpha$ by performing a second induction, 
over the dimension of the index set $\NN^m$. 
Let $\eta \in \NN^r$ for some $1\leq r \leq m$. We identify
$\eta$ with its image under the natural injection $\NN^r \rightarrow \NN^r \times \NN^{m-r}$.
Furthermore, let $\hat \eta$ be the image of $\eta$ under the projection $\NN^r \rightarrow \NN^{r-1}$
given by forgetting the last coordinate. 
We also use the notation $(\hat \eta, \ell) = (\eta_1, \dots, \eta_{r-1}, \ell)$. 
In our second induction, we consider the $r$-variate partial sums 
\begin{equation}
\label{eqn:Fr}
F_\gamma^r(z;\alpha) 
  = \sum_{\eta \in \NN^r} f_\eta(\alpha)\,z^\eta.
\end{equation}
As basis for the induction, we have that
that $f_0(\alpha)$ is a rational function of $\alpha_1$ without poles
in $\CC \minus \ZZ$, that is independent of $z$.

Expand the series $G_k(z;\alpha)$ as 
\[
G_k(z;\alpha) 
  = \sum_{\mu \in \NN^m} g^{(k)}_\mu(\alpha) 
  z^\mu.
\]
Identifying coefficients of \eqref{eqn:HornApplied}, we find that
\[
Q_k(\mu+e_k; \alpha) f_{\mu+e_k}(\alpha) - P_k(\mu; \alpha)f_\mu(\alpha) = g^{(k)}_{\mu+e_k}(\alpha).
\]

For each $k$, his is a first order inhomogeneous recurrence which can be solved explicitly,
we present here only the solution for $\eta \in \NN^r$ in the case that $k=r$;
\begin{equation}
\label{eqn:firstSeries}
f_{\eta}(\alpha) = \bigg[\prod_{\ell=0}^{\eta_r-1}
  \frac{P_r((\hat \eta,\ell);\alpha)}{Q_r((\hat\eta,\ell+1);\alpha)} \bigg] 
  \bigg[
  f_{\hat \eta}(\alpha)
  + \sum_{j=0}^{\eta_r-1} 
  \left(\frac{g^{(r)}_{(\hat \eta, j+1)}(\alpha)}{Q_r((\hat \eta, j+1);\alpha)}
  \Bigg/
  {\prod_{\ell=0}^{j}\frac{P_r((\hat \eta, \ell);\alpha)}{Q_r((\hat \eta,\ell+1);\alpha)}} \right)\bigg].
\end{equation}
Thus, the series with terms~\eqref{eqn:Fr} can be expanded as a sum of two series. We consider
each of them separately.

The first summand is the Hadamard product of the two series
\begin{equation}
\label{eqn:firstSmallSeries}
 \sum_{\eta\in \NN^r} 
  \prod_{\ell=0}^{\eta_r-1}
  \frac{P_r((\hat \eta,\ell);\alpha)}
  {Q_r((\hat \eta,\ell+1);\alpha)}
  \,z^\eta 
\end{equation}
and
\[
\frac{1}{1-z_r}\sum_{\hat\eta\in \NN^{r-1}} 
  f_{\hat \eta}(\alpha)\,z^{\hat \eta}
 = 
 \sum_{\eta\in \NN^{r}} 
  f_{\hat \eta}(\alpha)\,z^{\eta}
\]
The latter is holomorphic for $\alpha_1\in \CC \minus \ZZ$ and $z$ in
a polydisc around $0$ that does not depend on $\alpha$
by the second induction
hypothesis.
To see that~\eqref{eqn:firstSmallSeries} is holomorphic for $\alpha_1
\in \CC \minus \ZZ$ and $z$ in a polydisc around $0$ that does not
depend on $\alpha$, 
we bound this series term by term in absolute value using the series $\Phi_B$
from~\eqref{eqn:hornSeries}, and apply
Theorem~\ref{thm:HornSeriesConverge} and its proof.
We conclude, by Lemma~\ref{lemma:hadamardProduct}, the first summand
of \eqref{eqn:firstSeries} is holomorphic for
$\alpha_1 \in \CC \minus \ZZ$ and $z$ in a polydisc around $0$ that
does not depend on $\alpha$.

The series whose terms are given by the second summand in~\eqref{eqn:firstSeries}
can be rewritten as
\begin{equation}
\label{eqn:secondSum}
\sum_{\eta \in \NN^r} \bigg[\prod_{\ell=0}^{\eta_r-1}
   \frac{P_r((\hat\eta,\ell),\alpha)}
  {Q_r((\hat \eta,\ell+1), \alpha)} 
  \bigg]
  \bigg[ \sum_{j=0}^{\eta_r-1} 
  \frac{g^{(r)}_{(\hat\eta, j+1)}(\alpha)}{Q_r((\hat\eta, j+1), \alpha)}
  \prod_{\ell=0}^{j}\frac{Q_r((\hat \eta,\ell+1), \alpha)}{P_r((\hat\eta,\ell), \alpha)} 
  \bigg] z^\eta.
\end{equation}
The series
\[
\sum_{\eta \in \NN^r} 
  \frac{Q_r((\hat \eta,\ell+1), \alpha)}{P_r((\hat \eta,\ell), \alpha)}\,  z^\eta
\qquad\text{and}\qquad
\sum_{\eta \in \NN^r}
  \frac{z^\eta }{Q_r(\eta+e_r, \alpha)}
\]
define holomorphic functions for $\alpha_1 \in \CC \minus \ZZ$ and $z$
in a polydisc around $0$ independent of $\alpha$, again using~Theorem~\ref{thm:HornSeriesConverge}.
The (first) inductive hypothesis gives that the series
$\sum_{\eta\in\NN^r} g^{(r)}_\eta(\alpha) z^\eta$ is holomorphic for
$\alpha_1 \in \CC \minus \ZZ$ and $z$ in a
polydisc around $0$ that does not depend on $\alpha$
Taking Hada\-mard products, Lemma~\ref{lemma:hadamardProduct} implies that
the series
\begin{equation}\label{eqn:gQQPSum}
\sum_{\eta\in\NN^r} 
  \left[ 
  \frac{g^{(r)}_{\eta+e_r}(\alpha)}{Q_r(\eta+e_r, \alpha)}
  \prod_{\ell=0}^{\eta_r} 
  \frac{Q_r((\hat\eta,\ell+1), \alpha)}{P_r((\hat\eta,\ell), \alpha)} 
 \right] z^\eta
\end{equation}
is holomorphic for $\alpha_1 \in \CC \minus \ZZ$ and $z$ in a polydisc
around $0$ that does not depend on $\alpha$.
Taking the product of~\eqref{eqn:gQQPSum} with the function 
$\sum_{\eta\in \NN^r} z^\eta$ 
we see that the series whose coefficients consist of the second factor of \eqref{eqn:secondSum}
defines a holomorphic function for $\alpha_1 \in \CC \minus \ZZ$ and
$z$ in a polydisc around $0$ that does not depend on $\alpha$.
Finally,~\eqref{eqn:secondSum} is the Hadamard product
of~\eqref{eqn:firstSmallSeries} and~\eqref{eqn:gQQPSum}, so by
Lemma~\ref{lemma:hadamardProduct},~\eqref{eqn:secondSum} is
holomorphic for $\alpha_1 \in \CC \minus \ZZ$ and $z$ in a polydisc
around $0$ that does not depend on $\alpha$.

Note that in the above process, the convergence domain in $z$ of the
series involved has to be shrunk several times as we build $F_\gamma$
inductively. At each step, the domains in $z$ that are used are
independent of the parameters $\alpha$. As a matter of fact, what
these domains are depends only on the initial data of the matrix $A$,
the weight vector $w$ (when we use Theorem~\ref{thm:openSetInx}), the
matrix $B$ (when we use Theorem~\ref{thm:HornSeriesConverge}), and the
indices of the inductions (which control the Hadamard products that
need to be performed). Moreover, both inductions are finite: the
second induction has $m$ steps, while the first has at most
$(2^{2d}\vol(A))^n$ steps. We conclude that there exists 
a polydisc $W'$ around $0$ in $\CC^m$ such that $F(z;\alpha)$ is
holomorphic for $(z,\alpha_1) \in W' \times (\CC \minus \ZZ)$.
\end{proof}

\begin{proof}[Proof of Theorem~\ref{thm:mainHomogeneous}]
The stratification $\mathscr{S}$ introduced in Section~\ref{sec:intro}
has the following properties. If $\beta \in \mathscr{S}_i$ then
$\beta$ belongs to a unique irreducible component $S$ of the closure
$\overline{\mathscr{S}}_i$, and this component is a codimension $i$
affine subspace of $\CC^d$. If in addition, $\beta$ belongs to an
integer translate $L$ of the span of a face $\sigma$ of the triangulation
$\Delta_w$, then $L \supseteq S$.

Consider the exponents of $H_A(\bar\beta)$ with respect to $w$, where
$\bar\beta \in \mathscr{S_i}$. If $\bar\alpha$ is such an exponent, then
$\sigma = \{ i \mid \bar\alpha_i \notin \ZZ\}$ is a simplex in
$\Delta_w$. By the previous argument, the closure of
$\{ A \cdot \alpha \mid \alpha \in \CC^n; \alpha_i =
\bar\alpha_i,\; i \notin \sigma; \alpha_j \notin \ZZ, \; j \in \sigma\}$
contains the irreducible component of $\bar\beta$ in 
$\overline{\mathscr{S}}_i$. Thus, by Theorem~\ref{thm:logSeriesHolomorphic} the
hypergeometric series corresponding to the exponent $\bar\alpha$ can be extended to
a holomorphic function for $(x,\alpha)$ in $U\times Y$, where $Y$ is a
neighborhood of $\bar\beta$ in $\mathscr{S}_i$.
\end{proof}

\section{The confluent case}
\label{sec:confluentSolutions}

In this section, we use ideas from~\cite{berkesch,nilsson} to 
prove Theorem~\ref{thm:mainInhomogeneous}. 
We assume in this section that $A$ is not necessarily homogeneous. 
Let 
\[ 
{\rho(A)} = 
\left[ \begin{array}{c|c}
1 & 1\  \cdots \ 1 \\ \hline
0 & \\
\vdots & A \\
0 & 
\end{array} \right]
\] 
be the \emph{homogenization} of $A$. 
Observe that $\ZZ(\rho(A)) = \ZZ^{d+1}$ and ${\rho(A)}$ is homogeneous.

\begin{theorem}\label{thm:homogRank+isomSol}
If $\beta\in\CC^d$ and $\beta_0\in\CC$ generic, 
then the modules corresponding to the $A$-hyper\-geo\-metric systems
$\HA$ and $H_{\rho(A)}(\beta_0,\beta)$ have the same rank. 
In addition, for $x\in\CC^{n+1}$ sufficiently generic, the following map is an isomorphism: 
\[
\Sol_x(H_{\rho(A)}(\beta_0,\beta))\to\Sol_{\overline{x}}(\HA),
\] 
where $\overline{x}$ is obtained from $x$ by setting $x_0$ equal to $1$.
\end{theorem}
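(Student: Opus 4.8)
The plan is to obtain the asserted isomorphism from the homogenization correspondence and to read off the rank equality as a consequence, since $\rank$ is by definition the dimension of the solution space at a generic nonsingular point. Write the variables of the homogenized system as $x_0,x_1,\dots,x_n$. The Euler operators attached to $\rho(A)$ are $E_0^{\rho}=\sum_{j=0}^{n}x_j\partial_j$ (with eigenvalue $\beta_0$) together with $E_i^{\rho}=\sum_{j=1}^{n}a_{ij}x_j\partial_j=E_i$ for $i=1,\dots,d$, and $I_{\rho(A)}$ is exactly the $\partial_0$-homogenization of $I_A$. The structural point I would exploit is that any solution $\Psi$ of $H_{\rho(A)}(\beta_0,\beta)$ is homogeneous of degree $\beta_0$ for the total scaling recorded by $E_0^{\rho}$, so that $\Psi(x_0,\dots,x_n)=x_0^{\beta_0}\,\psi(x_1/x_0,\dots,x_n/x_0)$ with $\psi=\Psi|_{x_0=1}$. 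This recovers $\Psi$ from its restriction to $\overline{x}$, which makes the restriction map injective for free and reduces the theorem to proving that restriction is a well-defined bijection onto $\Sol_{\overline{x}}(\HA)$.

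First I would dispatch the two easy compatibilities. Since $E_i^{\rho}=E_i$ involves neither $x_0$ nor $\partial_0$, restriction to $x_0=1$ commutes with it and yields $E_i\psi=\beta_i\psi$ for $i=1,\dots,d$; and any binomial of $I_{\rho(A)}$ that is free of $\partial_0$ (equivalently, a $\rho(A)$-homogeneous binomial of $I_A$) restricts to itself in $I_A$. The real content is concentrated in the binomials $\partial^{p}-\partial^{q}$ of $I_A$ with $|p|\neq|q|$ — which occur precisely because $A$ is inhomogeneous — whose homogenizations $\partial^{p}-\partial_0^{\,c}\partial^{q}$ carry genuine powers of $\partial_0$, and $\partial_0$ does \emph{not} commute with setting $x_0=1$. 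Using $x_0\partial_0\Psi=(\beta_0-\sum_{j\ge1}x_j\partial_j)\Psi$, the restriction of $\partial_0^{\,c}\partial^{q}\Psi$ becomes a polynomial in the total-degree operator $E=\sum_{j\ge1}x_j\partial_j$ applied to $\partial^{q}\psi$, so a priori $\psi$ only satisfies a $\beta_0$-deformation of the intended toric relation. The heart of the argument is therefore to reconcile these deformed relations with the honest relations of $I_A$ on the confluent side; I expect this to be the main obstacle, and I would carry it out through the description of confluent Nilsson series in \cite{nilsson}, where genericity of $\beta_0$ is essential: it guarantees that the relevant polynomials in $\beta_0$ and $E$ are invertible exactly where needed, so that homogenization neither creates nor destroys solutions.

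Granting the well-definedness and bijectivity of the correspondence — equivalently, that the $E_0^{\rho}$-homogeneous lift of a solution of $\HA$ solves $H_{\rho(A)}(\beta_0,\beta)$, and conversely — the isomorphism $\Sol_x(H_{\rho(A)}(\beta_0,\beta))\to\Sol_{\overline{x}}(\HA)$ follows, and the equality $\rank H_{\rho(A)}(\beta_0,\beta)=\rank\HA$ is then immediate from comparing the dimensions of the two finite-dimensional solution spaces at the corresponding generic points. As a consistency check on the rank claim I would confirm that the normalized volume of the configuration $\rho(A)$ equals that of $\conv\{0,a_1,\dots,a_n\}$, so the two generic ranks coincide, and invoke the rank-jump results of \cite{berkesch} to see that choosing $\beta_0$ generic isolates exactly the rank contributed by $\beta$ in the confluent system, which is what allows the identity to persist even at parameters $\beta$ where the confluent rank jumps.
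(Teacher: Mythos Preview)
Your overall structure reverses the paper's logic, and the reversal leaves a genuine gap. You aim to establish the solution-space isomorphism first and read off the rank equality as a corollary; the paper does the opposite. It first proves $\rank H_A(\beta)=\rank H_{\rho(A)}(\beta_0,\beta)$ \emph{independently} of any map between solution spaces, using the combinatorial rank formula of~\cite{berkesch} (Theorem~\ref{thm:rankComputation} here): one shows that the ranking lattices $\EE^\beta$ for $A$ and $\EE^{(\beta_0,\beta)}$ for $\rho(A)$ are in bijection with matching volumes, codimensions, and lattice indices (Propositions~\ref{prop:preserve} and~\ref{prop:sameLatticeTranslates}). Genericity of $\beta_0$ enters precisely here, to ensure that faces of $\rho(A)$ not containing $\rho(0)$ contribute no extra ranking lattices. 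What you relegate to a ``consistency check'' is in fact the engine of the argument.

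With rank equality already in hand, the paper only needs the restriction map to be well-defined and injective, both of which it imports from~\cite{nilsson}; surjectivity is then automatic by equality of finite dimensions. Your proposal, by contrast, must establish surjectivity directly---that every solution of the confluent system lifts to one of the homogenized system---and you do not: the sentence ``Granting the well-definedness and bijectivity of the correspondence'' concedes exactly the point at issue. This is not a minor gap. You correctly observe that the inhomogeneous binomials of $I_A$ homogenize to operators carrying genuine powers of $\partial_0$, and that after restriction one obtains a $\beta_0$-deformed relation involving a polynomial in the total-degree operator rather than the honest $\partial^u-\partial^v$. But your hope that genericity of $\beta_0$ makes these polynomials ``invertible exactly where needed'' is not substantiated, and invertibility alone would not show that the deformed and honest relations coincide on the nose. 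Resolving this---and, separately, producing lifts even at parameters $\beta$ where the confluent rank jumps---is what the cited results of~\cite{nilsson} together with the ranking-lattice count accomplish; without them your outline does not close.
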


This result can be used to produce an upper bound for the rank of $\HA$ in the confluent case. 

\begin{corollary}\label{cor:upperBound}
If $A\in\ZZ^{d\times n}$ and $\beta\in\CC^d$, then the rank of $\HA$ is bounded above by $2^{2d+2}\cdot \vol(A)$. 
\end{corollary}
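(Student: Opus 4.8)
The plan is to reduce the confluent bound to the homogeneous bound $\rank(D/H_A(\bar\beta)) \leq 2^{2d}\vol(A)$ of \cite[Corollary~4.1.2]{SST} by passing to the homogenization $\rho(A)$. Since $\rho(A)$ is a $(d+1)\times(n+1)$ homogeneous integer matrix, that bound applies to it directly and reads $\rank(H_{\rho(A)}(\beta_0,\beta)) \leq 2^{2(d+1)}\vol(\rho(A)) = 2^{2d+2}\vol(\rho(A))$, valid for every parameter $(\beta_0,\beta)\in\CC^{d+1}$.

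First I would fix an arbitrary $\beta\in\CC^d$ and invoke Theorem~\ref{thm:homogRank+isomSol}: choosing $\beta_0\in\CC$ generic (which exists for the given $\beta$), we obtain $\rank(\HA) = \rank(H_{\rho(A)}(\beta_0,\beta))$. Combining this equality with the homogeneous bound for $\rho(A)$ yields $\rank(\HA) \leq 2^{2d+2}\vol(\rho(A))$. Since the right-hand side does not depend on the choice of $\beta_0$, and $\beta$ was arbitrary, this inequality holds for all $\beta\in\CC^d$.

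The remaining point is to identify $\vol(\rho(A))$ with $\vol(A)$. In the confluent setting the relevant volume is the normalized volume of $\conv(\{0,a_1,\dots,a_n\})$, consistent with the triangulations used in Theorem~\ref{thm:mainInhomogeneous}. The columns of $\rho(A)$ all have first coordinate equal to $1$, hence lie in the affine hyperplane $\{x_0=1\}\subset\RR^{d+1}$, and their convex hull is the copy of $\conv(\{0,a_1,\dots,a_n\})$ sitting at height one. The projection forgetting the coordinate $x_0$ carries the affine lattice $\ZZ^{d+1}\cap\{x_0=1\}$ isomorphically onto $\ZZ^d$, so it preserves normalized volume; therefore $\vol(\rho(A)) = \vol(\conv(\{0,a_1,\dots,a_n\})) = \vol(A)$. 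Substituting this identity gives $\rank(\HA)\leq 2^{2d+2}\vol(A)$, as claimed.

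I expect the main obstacle to be the bookkeeping in this volume identity: one must verify that the \emph{volume} appearing in the SST bound for $\rho(A)$ is the normalized lattice volume of a $d$-dimensional polytope in $\RR^{d+1}$, and that this matches the convention for $\vol(A)$ in the inhomogeneous case, namely the normalized volume of the convex hull that \emph{includes} the origin. Once the conventions are aligned, the argument is a direct combination of Theorem~\ref{thm:homogRank+isomSol} and \cite[Corollary~4.1.2]{SST}.
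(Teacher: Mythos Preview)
Your proposal is correct and follows essentially the same approach as the paper: apply the homogeneous bound from \cite[Corollary~4.1.2]{SST} to $\rho(A)$, invoke Theorem~\ref{thm:homogRank+isomSol} to equate ranks, and use $\vol(\rho(A))=\vol(A)$. The paper simply asserts the volume identity (it is justified separately in Proposition~\ref{prop:preserve}), whereas you spell out the cone-over-the-polytope argument explicitly.
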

\begin{proof}
\cite[Corollary~4.1.2]{SST} states that if $A$ is homogeneous, then 
\[
\rank (D/\HA)\leq 2^{2d}\cdot\vol(A) 
\quad\text{for any $\beta$}.
\] 
The result is then follows directly from Theorem~\ref{thm:homogRank+isomSol}, since $\vol(A) = \vol({\rho(A)})$. 
\end{proof}

To prove Theorem~\ref{thm:homogRank+isomSol}, we first recall a result from~\cite{berkesch} that computes the rank of $\HA$ for any $\beta$. 
For a face $F\preceq A$, 
let $\vol(F)$ denote the normalized volume of $F$ in $\ZZ F$, 
and consider the translates 
\[
\EE_F^\beta\defeq 
\big[\ZZ^d\cap(\beta+\CC F) \big]\minus(\NN A+\ZZ F) = \bigsqcup_{b\in R_F^\beta} (b+\ZZ F),
\]
where $R_F^\beta$ is a set of lattice translate representatives. 
Each lattice translate in an $\EE_F^\beta$ is called a \emph{ranking lattice} of $\beta$. Let  
\begin{align}
\label{eqn:rankingLattices}
\EE^\beta = \bigcup_{F\preceq A}\EE_F^\beta 
= \bigcup_{\stackrel{F\preceq A}{b\in R_F^\beta}} (b+\ZZ F)
\end{align}
denote the union of ranking lattices.

\begin{theorem}
\label{thm:rankComputation}
For any $A\in\ZZ^{d\times n}$ and $\beta\in\CC^d$, 
the rank 
of $\HA$ 
can be computed from the combinatorics of the ranking lattices in $\EE^\beta$. 
The formula obtained uses only the lattice of intersections of the faces with maximal lattice translates appearing in $\EE^\beta$ and, for each such face $F$, $\vol(F)$, $\codim(F)$, and $|R_F^\beta|$. 
\qed
\end{theorem}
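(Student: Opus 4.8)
The plan is to compute $\rank(\HA)$ within the Euler--Koszul homology framework of \cite{MMW}. Writing $S_A = \CC[\del]/I_A$ for the affine semigroup ring, regarded as a $\ZZ^d$-graded module, one has $D/\HA = H_0(E-\beta; S_A)$, the zeroth Euler--Koszul homology of $S_A$ with respect to $E-\beta$. The first step is to recall from \cite{MMW} that $\rank(\HA)$ equals the generic value $\vol(A)$ plus a correction measured by the higher Euler--Koszul homology, and that this correction is governed by the $\ZZ^d$-graded local cohomology modules $H^i_{\mathfrak m}(S_A)$ of $S_A$ at the irrelevant maximal ideal $\mathfrak m$: the rank exceeds $\vol(A)$ exactly when $-\beta$ lies among the degrees supporting $\bigoplus_{i<d} H^i_{\mathfrak m}(S_A)$.

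The second step is to translate the graded supports of these local cohomology modules into the combinatorics of ranking lattices. The standard description of the local cohomology of a semigroup ring expresses each $H^i_{\mathfrak m}(S_A)$ as supported on unions of translates of the face sublattices $\ZZ F$, the support recording precisely the ``holes'' of $\NN A$ along each face $F$. Intersecting these supports with $\beta + \CC F$ and deleting $\NN A + \ZZ F$ produces exactly the sets $\EE_F^\beta$, whose connected pieces $b + \ZZ F$, indexed by $b \in R_F^\beta$, are the ranking lattices constituting $\EE^\beta$. I would thus establish a bijection between the graded components of the local cohomology that meet $-\beta$ and the ranking lattices appearing in $\EE^\beta$.

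The third step attaches to each ranking lattice its numerical contribution. A translate $b + \ZZ F$ carries a hypergeometric subsystem associated to the face $F$, whose generic rank is $\vol(F)$; the homological degree in which it appears is controlled by $\codim F$, and the number of independent translates over a fixed face is $|R_F^\beta|$. Feeding these data through the long exact sequences of Euler--Koszul homology yields a contribution to $\rank(\HA)$ that depends only on $\vol(F)$, $\codim F$, and $|R_F^\beta|$, together with the incidence relations among the faces that carry maximal lattice translates in $\EE^\beta$.

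The final step is to reconcile overlaps. A single lattice point may lie on translates $b + \ZZ F$ for several faces $F$, so the face-by-face count overcounts, and I would correct it by an inclusion--exclusion governed by the lattice of intersections of the faces carrying maximal lattice translates in $\EE^\beta$, with coefficients supplied by the M\"obius function of this intersection poset. I expect the principal obstacle to be exactly this bookkeeping: proving that the higher Euler--Koszul homology decomposes cleanly enough along faces that the rank is determined by the intersection lattice together with the purely local data $\vol(F)$, $\codim F$, and $|R_F^\beta|$, with no residual dependence on the finer arithmetic of $A$. Verifying that the homological placements forced by $\codim F$ and the M\"obius coefficients combine into a single, manifestly combinatorial (and necessarily nonnegative) expression for the rank jump is the technical heart of the argument.
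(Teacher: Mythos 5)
The first thing to say is that the paper contains no proof of this statement: Theorem~\ref{thm:rankComputation} is stated with a terminal tombstone because it is being \emph{recalled} from \cite{berkesch}, as the preceding sentence in Section~\ref{sec:confluentSolutions} makes explicit. So the only meaningful comparison is with that reference, and at the level of strategy your sketch does track it: the proof in \cite{berkesch} is carried out in the Euler--Koszul framework of \cite{MMW}, starting from the identification of $D/\HA$ with the zeroth Euler--Koszul homology $H_0(E-\beta;S_A)$, and from the fact that rank-jumping parameters are governed by the $\ZZ^d$-graded local cohomology modules $H^i_{\mathfrak m}(S_A)$, $i<d$. One correction to your first step: what detects rank jumps is the \emph{quasi-degree} set $\qdeg$ (the Zariski closure of the set of nonzero degrees), not the degree set itself; this distinction is essential, since the exceptional parameters form a Zariski-closed arrangement while the honest degree set of local cohomology is not closed.

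The genuine gap is that your third and fourth steps assert precisely what has to be proved, and you defer it yourself as ``the technical heart.'' That each face $F$ contributes only through $\vol(F)$, $\codim(F)$, and $|R_F^\beta|$, and that overlaps among ranking lattices are resolved by the intersection poset, is not established in \cite{berkesch} by a M\"obius-function inclusion--exclusion applied to a face-by-face count; nothing in your proposal shows that such coefficients exist or that the resulting expression is well defined, nonnegative, and independent of the finer arithmetic of $A$. What the cited proof actually does is construct auxiliary \emph{ranking toric modules} filtered so that their composition factors correspond to the ranking lattices in $\EE^\beta$, prove that Euler--Koszul homology of this filtered object computes the rank jump of $\HA$, and then control the associated spectral sequence; the combinatorial formula of the theorem is read off from that homological computation, and its dependence on only the stated data is an output of the construction rather than an assumption. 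That construction and degeneration argument constitute essentially the whole of \cite{berkesch}. As it stands, your proposal correctly identifies the toolkit and rephrases the theorem in homological language, but the argument that makes the count close up is missing.
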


In light of Theorem~\ref{thm:rankComputation}, 
the proof of Theorem~\ref{thm:homogRank+isomSol} will proceed by showing that there is a bijection between the ranking lattices in $\EE^\beta$ and the ranking lattices in $\EE^{(\beta_0,\beta)}$, and, in addition, the combinatorial data in the formula in Theorem~\ref{thm:rankComputation} are preserved by homogenization. 

We define the \emph{homogenization map} 
$\rho\colon\CC^d~\rightarrow~\CC^{d+1}$ to be 
\[
\rho(b) = \left[ \begin{smallmatrix} 1 \\ b \end{smallmatrix} \right]
\]
for $b\in\CC^d$. 
Note that the homogenization of $A$ is 
${\rho(A)} = [ \rho(a_0)\ \ \rho(a_1) \ \cdots \ \rho(a_n) ]$, 
where $a_0 = 0$ in $\CC^d$. 
Given a face $F\preceq A$, setting 
\[
\rho(F) = \{ \rho(0) \} \cup \{ \rho(a_i) \mid a_i\in F\} \subseteq {\rho(A)} 
\] 
induces a one-to-one correspondence between the faces of $A$ 
and the faces of ${\rho(A)}$ containing $\rho(0)$.

\begin{proposition}
\label{prop:preserve} 
For a face $F\preceq A$, the homogenization map $\rho$ preserves 
its codimension, volume, and lattice index $[\ZZ A \cap \RR F: \ZZ F]$. 
\end{proposition}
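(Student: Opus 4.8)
The plan is to reduce all three assertions to a single linear-algebraic observation: the splitting of $\RR^{d+1}$ furnished by the homogenizing coordinate. Write $e_0 = \rho(0) = (1,0,\dots,0)$ and identify $\RR^d$ with the hyperplane $\{0\}\times\RR^d\subseteq\RR^{d+1}$, so that $\rho(a_i) = e_0 + a_i$ under this identification. Since $\rho(F) = \{e_0\}\cup\{\rho(a_i)\mid a_i\in F\}$ and $\rho(a_i)-e_0 = a_i$, the linear span decomposes as a direct sum
\[
\RR\rho(F) = \RR e_0 \oplus \RR F ,
\]
compatibly with the ambient splitting $\RR^{d+1} = \RR e_0 \oplus \RR^d$. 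First I would record this decomposition together with its lattice refinements $\ZZ^{d+1} = \ZZ e_0 \oplus \ZZ^d$ and $\ZZ\rho(F) = \ZZ e_0 \oplus \ZZ F$; the latter holds because $e_0\in\ZZ\rho(F)$ lets one replace each generator $\rho(a_i)=e_0+a_i$ by $a_i$.

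From this, codimension is immediate: $\dim\RR\rho(F) = 1 + \dim\RR F$ while the ambient dimension also rises by one, so $\codim\rho(F) = (d+1)-\dim\RR\rho(F) = d-\dim\RR F = \codim F$. For the lattice index, intersecting the span decomposition with $\ZZ\rho(A)=\ZZ^{d+1}$ gives
\[
\ZZ\rho(A)\cap\RR\rho(F) = \ZZ e_0 \oplus (\ZZ A\cap\RR F),
\]
using $\ZZ A=\ZZ^d$. Comparing with $\ZZ\rho(F)=\ZZ e_0\oplus\ZZ F$, the common summand $\ZZ e_0$ cancels in the quotient, whence $[\ZZ\rho(A)\cap\RR\rho(F):\ZZ\rho(F)] = [\ZZ A\cap\RR F:\ZZ F]$.

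The one point requiring genuine care is the volume, because the two sides use nominally different conventions: $\vol(F)$ is the normalized volume attached to a face of the (possibly inhomogeneous) cone over $A$, whereas $\vol(\rho(F))$ is the normalized volume of a face of the \emph{homogeneous} configuration $\rho(A)$, i.e.\ of the lattice polytope $\conv(\rho(F))$ lying in the height-one hyperplane $\{x_0=1\}$. Here I would translate by $-e_0$: since $\rho(a_i)-e_0 = a_i$ and $e_0-e_0 = 0$, this carries $\conv(\rho(F))$ isometrically onto $\conv(\{0\}\cup\{a_i\mid a_i\in F\})$, the polytope whose normalized volume with respect to $\ZZ F$ is exactly $\vol(F)$. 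The lattice normalizing $\vol(\rho(F))$ is the lattice in the linear hyperplane parallel to the affine span of $\conv(\rho(F))$, namely $\ZZ\rho(F)\cap(\{0\}\times\RR F) = \ZZ F$; hence the two normalizations coincide and $\vol(\rho(F)) = \vol(F)$.

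I expect the main obstacle to be precisely this bookkeeping of conventions in the volume step: confirming that the apex $0$ implicitly built into the inhomogeneous definition of $\vol(F)$ is exactly the data supplied by the extra vertex $\rho(0)$ and the homogenizing coordinate, so that no spurious factor appears. The codimension and index assertions, by contrast, are formal consequences of the direct-sum decomposition.
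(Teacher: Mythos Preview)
Your proof is correct and follows essentially the same approach as the paper's: both hinge on $\rho(0)\in\rho(F)$, which yields the splitting $\RR\rho(F) = \RR e_0 \oplus \RR F$ and $\ZZ\rho(F) = \ZZ e_0 \oplus \ZZ F$ that drives all three claims. Your explicit direct-sum bookkeeping is simply a cleaner unification of the paper's three terse separate arguments; for the volume, the paper phrases the comparison as $\conv(\rho(F)\cup\{0_{d+1}\})$ being a cone over $\conv(F\cup\{0_d\})$ rather than via your translation by $-e_0$, but the content is identical.
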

\begin{proof}
Homogenization preserves codimension because $\dim(\rho(F)) = \dim(F) +1$.
For volume, notice that the convex hull of $\rho(F)$ 
and the origin in $\ZZ(\rho(F)) \otimes_\ZZ \RR$ is a cone over 
the convex hull of $F$ and the origin in $\ZZ F \otimes_\ZZ \RR$, 
under the obvious embedding, 
so this result follows from the definition of volume.
Finally, if $b,c\in \ZZ^d$, then 
$b-c\in\ZZ F$ if and only if 
$\rho(b)-\rho(c)\in\ZZ(\rho(F))$, 
since $\rho(0)\in \rho(F)$. 
\end{proof}

\begin{proposition}\label{prop:sameLatticeTranslates}
If $\beta\in\CC^d$ and $\beta_0\in\CC$ is generic, 
then there is a bijection between the ranking lattices in $\EE^\beta$ and $\EE^{(\beta_0,\beta)}$.
In particular, if 
\[
\cJ(\beta) = 
\left\{ (F,b) \ \Big\vert\ F\preceq A,\ b\in\ZZ^d,\ \text{and } (b+\ZZ F)\subseteq\EE_F^{\beta} \right\},
\]
then 
\[
\EE^{(\beta_0,\beta)} = \bigcup_{(F,b)\in \cJ(\beta)} \left[ \rho(b) + \ZZ(\rho(F)) \right]. 
\] 
\end{proposition}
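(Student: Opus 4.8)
The plan is to compute $\EE^{(\beta_0,\beta)} = \bigcup_{G\preceq\rho(A)}\EE_G^{(\beta_0,\beta)}$ face by face, separating the faces $G$ of $\rho(A)$ into those of the form $\rho(F)$ (equivalently, those containing $\rho(0)$, by the stated correspondence) and those with $\rho(0)\notin G$. I would first dispose of the former by a direct computation, showing their contribution is independent of $\beta_0$ and accounts for the right-hand side of the desired identity, and then show that for generic $\beta_0$ the latter contribute nothing.

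For a face $G = \rho(F)$ I would carry out a purely linear-algebraic computation. Since $\rho(a_i) - \rho(0) = (0, a_i)$, one gets $\CC\rho(F) = \CC\times\CC F$ and $\ZZ\rho(F) = \ZZ\times\ZZ F$, hence $(\beta_0,\beta) + \CC\rho(F) = \CC\times(\beta+\CC F)$ and $\ZZ^{d+1}\cap\big((\beta_0,\beta)+\CC\rho(F)\big) = \ZZ\times\big(\ZZ^d\cap(\beta+\CC F)\big)$. A short check gives $\NN\rho(A)+\ZZ\rho(F) = \ZZ\times(\NN A+\ZZ F)$, the first coordinate becoming unconstrained once arbitrary multiples of $\rho(0)$ may be added. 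Combining these yields
\[
\EE_{\rho(F)}^{(\beta_0,\beta)} = \ZZ\times\EE_F^\beta,
\]
which is visibly independent of $\beta_0$. Writing $\EE_F^\beta = \bigsqcup_{b\in R_F^\beta}(b+\ZZ F)$ and using $\ZZ\rho(F) = \ZZ\times\ZZ F$, so that $\rho(b)+\ZZ\rho(F) = \ZZ\times(b+\ZZ F)$, I conclude that the ranking lattices of $\rho(A)$ inside $\EE_{\rho(F)}^{(\beta_0,\beta)}$ are exactly the $\rho(b)+\ZZ\rho(F)$ with $b\in R_F^\beta$. This gives both the identity on the face $\rho(F)$ and the bijection $b+\ZZ F\mapsto\rho(b)+\ZZ\rho(F)$; it is well defined because $b'-b\in\ZZ F$ implies $\rho(b')-\rho(b) = (0,b'-b)\in\ZZ\rho(F)$, and injective because distinct cosets of $\ZZ F$ produce distinct cosets of $\ZZ\times\ZZ F$.

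The hard part will be the genericity step, namely that every face $G$ with $\rho(0)\notin G$ satisfies $\EE_G^{(\beta_0,\beta)} = \emptyset$ for generic $\beta_0$. Here I would use that the cone $\RR_{\geq0}\rho(A)$ is pointed (the first-coordinate functional equals $1$ on every column), so $\RR_{\geq0}\rho(A)\cap\RR G = G$ for each of its faces $G$; as $\rho(0)$ lies in the cone, $\rho(0)\notin G$ forces $\rho(0) = e_0\notin\CC G$. Letting $q\colon\CC^{d+1}\to\CC^{d+1}/\CC G$ be the quotient map, $q(e_0)\neq0$, so $\beta_0\mapsto q\big(\beta_0 e_0+(0,\beta)\big)$ is an injective affine map of $\CC$ whose image is a complex line. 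This line meets the discrete lattice $q(\ZZ^{d+1})$ in at most countably many points, so the condition $(\beta_0,\beta)\in\ZZ^{d+1}+\CC G$ --- which is exactly what makes $\ZZ^{d+1}\cap((\beta_0,\beta)+\CC G)$, and hence $\EE_G^{(\beta_0,\beta)}$, nonempty --- holds only for $\beta_0$ in a countable set. Taking the union over the finitely many such faces $G$ exhibits a co-countable (in particular generic) set of $\beta_0$ for which all faces not containing $\rho(0)$ drop out.

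Finally I would assemble the pieces: for generic $\beta_0$,
\[
\EE^{(\beta_0,\beta)} = \bigcup_{F\preceq A}\EE_{\rho(F)}^{(\beta_0,\beta)} = \bigcup_{F\preceq A}\ \bigsqcup_{b\in R_F^\beta}\big[\rho(b)+\ZZ\rho(F)\big],
\]
and since $\rho(b)+\ZZ\rho(F)$ depends only on the coset $b+\ZZ F$, this union coincides with $\bigcup_{(F,b)\in\cJ(\beta)}[\rho(b)+\ZZ\rho(F)]$, which is the asserted formula. The per-face bijections from the second paragraph then combine into the claimed bijection between the ranking lattices of $\EE^\beta$ and those of $\EE^{(\beta_0,\beta)}$.
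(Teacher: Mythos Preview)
Your proof is correct and follows the same two-step strategy as the paper: handle faces $G=\rho(F)$ containing $\rho(0)$ by direct comparison with faces of $A$, then show that faces with $\rho(0)\notin G$ contribute nothing for generic $\beta_0$. For the first step you make the product decomposition $\EE_{\rho(F)}^{(\beta_0,\beta)} = \ZZ\times\EE_F^\beta$ fully explicit; the paper instead checks directly that the two defining conditions of a ranking lattice (membership in $\beta+\CC F$ and disjointness from $\NN A+\ZZ F$) are preserved under $\rho$. These are equivalent, yours being the more detailed version.

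The genericity step is where your execution genuinely differs. You observe that $\rho(0)\notin G$ forces $e_0\notin\CC G$ (via pointedness of $\RR_{\geq 0}\rho(A)$ and the fact that a column lying on a face must belong to that face), so the affine line $\beta_0\mapsto(\beta_0,\beta)$ meets each translate $n+\CC G$ in at most one point; this gives a co-countable set of good $\beta_0$. The paper instead argues that, because $\{\rho(0)\}$ is itself a face of $\rho(A)$, only finitely many ranking lattices can arise from faces $G\not\ni\rho(0)$ as $\beta_0$ ranges over $\RR_{\geq 0}$, and then avoids these finitely many by a Zariski-open condition, adding that any $\beta_0\in\RR_{\gg 0}$ works when $\beta\notin\RR_{\geq 0}A$. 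Your argument is more elementary and self-contained; the paper's yields a stronger and more explicit description of the good set, and that extra information (finitely many translates to avoid) is invoked later in the proof of Theorem~\ref{thm:mainInhomogeneous}.
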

\begin{proof}
Fix $F\preceq A$ and $b\in\ZZ^d$. 
Since $\rho(0) \in \rho(F)$, 
$(b + \ZZ F)\cap(\NN A + \ZZ F) = \varnothing$ if and only if 
\[
[\rho(b) + \ZZ(\rho(F))]\cap[\NN(\rho(A)) + \ZZ(\rho(F))] = \varnothing.
\]
Again because $\rho(0) \in \rho(F)$, 
$b\in\beta+\CC F$ is equivalent to 
$\rho(b)\in \rho(\beta) +\CC(\rho(F)) = \beta' + \CC(\rho(F))$. 
Therefore, if $b+\ZZ F\subseteq \FF^\beta$, then 
$\rho(b)+\ZZ\rho(F)\subseteq\EE^{(\beta_0,\beta)}$. 
Further, for the reverse containment, it is enough to show that a face $G\preceq \rho(A)$ with $\rho(0)\notin G$ does not contribute additional ranking lattices to $\EE^{(\beta_0,\beta)}$. 

Since $\{\rho(0)\mid 0\in\ZZ^d\}$ is a face of $\rho(A)$,  
the union of ranking lattices 
$\bigcup_{\beta_0\in\RR_{\geq 0}} \EE^{(\beta_0,\beta)}$
necessarily involves finitely many lattice translates. 
Thus there is a Zariski open set from with to choose $\beta_0\in\CC$ so that any additional lattice translates 
from a $G\preceq \rho(A)$ with $\rho(0)\notin G$ can be avoided. 
In particular, for all $\beta\notin\RR_{\geq 0}A$, only finitely many lattices which intersect $\RR_{\geq 0}A$ need be avoided, and then any $\beta_0\in\RR_{\gg 0}$ will yield a proper choice.  
\end{proof}

\begin{proof}[Proof of Theorem~\ref{thm:homogRank+isomSol}]
The first statement follows immediately from Theorem~\ref{thm:rankComputation} and Proposition~\ref{prop:preserve}. 
Thus it is enough to show that the map 
\[
\Sol_x(H_{\rho(A)}(\beta_0,\beta))\to\Sol_x(\HA)
\]
given by setting $x_0$ equal to $1$ is injective. 
But this is the natural map induced by 
\[
\CC[\del_1,\dots,\del_n]/I_A\hookrightarrow \CC[\del_0,\del_1,\dots,\del_n]/I_{\rho(A)}
\] 
by \cite[Proposition~3.17]{nilsson}, thanks to the proof of~\cite[Lemma~3.12]{nilsson}. Since the resulting $D$-modules have the same rank, the proof is complete.
\end{proof}

\begin{proof}[Proof of Theorem~\ref{thm:mainInhomogeneous}]
In light of the proof of Theorem~\ref{thm:mainHomogeneous} at the end
of Section~\ref{sec:LogSeriesHolomorphic}, it remains only to reduce
the result when $A$ is not homogeneous to the homogeneous case. We
apply Theorem~\ref{thm:mainHomogeneous} to $\rho(A)$ using a weight
vector which is a generic perturbation of $(0,1,1,\dots,1)\in
\RR^{n+1}$, and produce an open set $U' \subset \CC^{n+1}$ and a
stratification $\mathscr{S}'$ such that the solutions of
$H_{\rho(A)}(\beta_0,\beta)$ are holomorphic on $U'\times S'$ for any
stratum $S'$ induced by $\mathscr{S}'$. By construction, the
stratification $\mathscr{S}$ we are interested in is the projection of
$\mathscr{S}'$ that forgets the zeroth coordinate.
By the end of the proof of Proposition~\ref{prop:sameLatticeTranslates}, 
it is possible to find real numbers 
$\alpha_0,\alpha_1,\dots,\alpha_n$ that are algebraically independent and irrational so that 
$Z = \Var(\alpha_0x_0+\alpha_1x_1+\cdots+\alpha_nx_n)\subset\CC^{d+1}$ satisfies 
$\pi(Z) = \CC^d$ and 
$Z\cap [(\ZZ^{d+1}+\ZZ G)\minus\NN A] = \varnothing$ for all $G\preceq\rho(A)$ with $\rho(0)\notin G$. 
As such, for each $\beta\in\CC^d$, $Z$ contains a unique lift
$(\beta_0,\beta)$ of $\beta$ so that
Theorem~\ref{thm:homogRank+isomSol} holds, and the isomorphism therein
is holomorphic on $U\times S$ for any stratum $S$ induced by
$\mathscr{S}$, where $U$ is the projection of $U'$ that forgets the
zeroth coordinate.
\end{proof}

\raggedbottom
\def\cprime{$'$} \def\cprime{$'$}
\providecommand{\MR}{\relax\ifhmode\unskip\space\fi MR }
\providecommand{\MRhref}[2]{%
  \href{http://www.ams.org/mathscinet-getitem?mr=#1}{#2}
}
\providecommand{\href}[2]{#2}
\end{document}